\documentclass{article}
\usepackage{amsfonts}
\usepackage{amsmath}
\usepackage{amssymb}
\usepackage{amsthm}
\newtheorem{thm}{Theorem}[subsection]

\newtheorem{lem}[thm]{Lemma}
\newtheorem{prop}[thm]{Proposition}
\newtheorem{cor}[thm]{Corollary}
\newtheorem{defn}[thm]{Definition}
\newtheorem{q}[thm]{Question}
\newtheorem{rem}[thm]{Remark}
\newcommand\numberthis{\addtocounter{equation}{1}\tag{\theequation}}
\begin{document}

\title{A Negative Answer to a Problem of Aldous on Determination of Exchangeable Sequences}
\author{Jeff Lin\\UCLA}
\date{\today}
\maketitle

\begin{abstract}
We present results concerning when the joint distribution of an exchangeable sequence is determined by the marginal distributions of its partial sums.  The question of whether or not this determination occurs was posed by David Aldous.  We then consider related uniqueness problems, including a continuous time analog to the Aldous problem and a randomized univariate moment problem.

\end{abstract}
\newpage
\section{Introduction}

In probability theory, there are many results concerning uniqueness of a distribution satisfying certain properties.  For instance, there are the various moment problems, the inversion of the characteristic function, the inversion of the Laplace transform.  Another kind of uniqueness result relates to exchangeable sequences of random variables.  Given the joint distribution of an exchangeable vector $X=(X_n)_{n \in \mathbb{N}}$, it can be written as a ``mixture'' of iids in exactly one way. 

We will prove altered versions of these types of results.  Roughly speaking, we will assume only partial information, and make regularity assumptions to ensure that the resulting problem is well-defined.  In the process, we show a trend of what types of obstructions there can be to such uniqueness results: arithmetic and algebraic structure. (These two notions are related because arithmetic relationships between exponents, say in a Laplace transform, will correspond to polynomial relationships between the exponentials, for instance the Laplace transforms.)

In \cite{A}, (p. 20) Aldous proposes the following question.

\begin{q}
Let $X=(X_j)_{j\in \mathbb{N}}$, $Y=(Y_j)_{j\in \mathbb{N}}$ be exchangeable sequences of $\mathbb{R}$-valued random variables, with $S_n=\sum_{j=1}^n X_j, T_n=\sum_{j=1}^n Y_j$.  Suppose that $\forall n \ge 1$ we have $S_n=_dT_n$.  Does it follow that $X=_dY$  \label{q1}?
\end{q}
Despite this question being over 30 years old, only partial progress has been made on it.  In \cite{EZ} Section 2, Evans and Zhou show the answer is negative in the class of signed random variables.  Therefore, interest has somewhat shifted to the nonnegative case.  In \cite{EZ}, it is shown that there is an affirmative answer if we know that $X$ and $Y$ are mixtures of countably many nonnegative iid sequences.  We will show that the nonnegative restriction does not improve the situation in the absence of the additional assumption of \cite{EZ}: $X$ cannot be determined uniquely from this partial information even if $X_j\ge0$. However, as we will see, uniqueness holds when the exchangeable sequences are mixtures of up to $3$ iid sequences.

Uniqueness is also true for more complicated mixtures, so long as the iid sequences involved in the mixture are related ``transcendentally''. (It is the main purpose of this paper to present results which make this theme precise, in the context of all uniqueness questions we will consider.)  We may think of the heuristic, ``mixtures of a small number of distributions exhibit uniqueness of $X$'', as a special case of the heuristic about arithmetic and algebraic (polynomial) dependence.  For instance, if the collection of allowed distributions in the is small, then there cannot be too many arithmetic dependences.

The first question naturally leads to the following continuous time analog, suggested by J. \v Cern\'y.

\begin{q}
Let $S_t$ and $T_t$ be mixtures of L\'evy Processes.  Suppose that $\forall t \ge0,\, S_t=_dT_t$.  Is $(S_t)_{t \ge0 }$ jointly equidistributed with $(T_t)_{t \ge0 }$\label{q2}?
\end{q}

We will see that in the case of restricting to mixtures of Brownian Motions, the answer is yes.  However, we will find a (possibly unexpected) parallel between a measure similar to the Levy measure implied in this problem and the mixing measure implied in Question \ref{q1}.  In particular, we will find that upon restricting to even just Poisson Processes, uniqueness fails.

For a L\'evy Process, knowing the marginal at any time $t\neq0$, such as $t=1$, tells us the entire joint distribution of the process.  However, this will not be true for a mixture of L\'evy Processes.  We will see that in some cases, we could say $S_t$ is unique just from making observations at $t\in \mathbb{N}$ and other times we require all $t\ge0$.  In the former case, the continuous time problem makes uniqueness of $S_t$ more plausible than the discrete time case only because continuous time imparts infinite divisibility which limits the possible distributions that can occur in the mixture.  In the latter case, the continuous time problems will tend to exhibit uniqueness over their discrete time counterparts not only because of infinite divisibility, but because observation at noninteger $t$'s provide more information beyond that.

An example of where the fact that we make continuum observations makes a difference, at least in the argument, is the case of a mixture of normal distributions or Brownian motions.  In this case, the continuum of observations destroys the arithmetic structure, making uniqueness a fact in the Brownian Motion case, as opposed to an open problem in the discrete time case.

In light of these questions, it is natural also to consider various random analogs of uniqueness problems from the classical theory of random variables.  For instance, one such a question are posed below:

\begin{q}
Let $\alpha, \beta$ be random Borel measures on $\mathbb{R}$.  Suppose that their moments (which are random variables) $\int_\mathbb{R} x^n d\alpha$ and $\int_\mathbb{R} x^n d\beta$ are well-defined and equal in marginal distribution (i.e. for one $n$ at a time).  Do we have $\alpha=_d \beta$\label{q3}?  
\end{q}


Question \ref{q3} can be viewed as a randomization of classical uniqueness theorems, in the sense that the probability measure one tries to recover is deterministic in the classical theories, and we now take it to be random.  

We will see that in all randomized versions of classical uniqueness problems we consider, knowing joint information is enough to assert uniqueness of the original random measure in distribution, but knowing marginal information is not enough.  We will see that the existence of counterexamples again depends on the arithmetic and algebraic structure.

The layout of the paper is as follows.  We will treat Question \ref{q1} in Section 2, Question \ref{q2} in Section 3, and the questions similar to Question \ref{q3} in Section 4.  Section 3 is fairly technical, but Section 4 does not depend on Section 3, so Section 3 could be omitted on a first pass.  In each of the sections, we will first discuss the results required to make the questions precise and the definitions associated to the corresponding question.  We also discuss the machinery to be used in the rest of the section. In subsections, we consider answers to the question in various cases.

\section{Discrete Time Exchangeability Problem}

In this section, we consider Question \ref{q1} posed by Aldous.  We keep the notation used in the statement of the question, so that $X=(X_j)$ and $Y=(Y_j)$ will denote sequences of exchangeable random variables.

First we state the following without proof:

\begin{lem}[Classical Bounded Moment Problem] Let $V=(V_\gamma)$ and $Z=(Z_\gamma)$ be vectors indexed by the same set $\Gamma$ of arbitrary cardinality.  Thus $V$ and $Z$ are valued in $\mathbb{R}^\Gamma$.  Assume that for each $\gamma \in \Gamma$, $V_\gamma, Z_\gamma$ are bounded real random variables.  Then $V$ and $Z$ have the same distribution if and only if they have the same joint moments, i.e. $\forall \{\gamma_1, \dots, \gamma_l\} \subset \Gamma$ $\forall r_1, \dots, r_l \in \mathbb{N}$ 
\begin{equation}\mathbb{E}\left(\prod_{i=1}^l V_{\gamma_i}^{r_i} \right)=\mathbb{E}\left(\prod_{i=1}^l Z_{\gamma_i}^{r_i} \right)\end{equation}.\label{arbmoment}
\end{lem}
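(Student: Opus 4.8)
The plan is to prove the nontrivial implication, that equality of all joint moments forces equality in distribution; the converse is immediate, since each joint moment is an integral of a polynomial against the joint law and is finite by boundedness, so equal laws trivially yield equal moments.

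First I would reduce to the finite-dimensional case. Equality in distribution of two $\mathbb{R}^\Gamma$-valued random elements, taken with respect to the product Borel $\sigma$-algebra, is equivalent to equality of all finite-dimensional marginal laws: the measurable finite-dimensional cylinders form a $\pi$-system generating the product $\sigma$-algebra, so by the $\pi$-$\lambda$ theorem two laws agreeing on this $\pi$-system must coincide. Hence it suffices to fix an arbitrary finite subset $\{\gamma_1, \dots, \gamma_l\} \subset \Gamma$ and show that $\mu := \mathrm{law}(V_{\gamma_1}, \dots, V_{\gamma_l})$ and $\nu := \mathrm{law}(Z_{\gamma_1}, \dots, Z_{\gamma_l})$, both probability measures on $\mathbb{R}^l$, are equal.

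Next I would exploit boundedness. For each $i$ choose $M_i$ exceeding both the essential supremum of $|V_{\gamma_i}|$ and that of $|Z_{\gamma_i}|$, so that $\mu$ and $\nu$ are both supported on the common compact box $K = \prod_{i=1}^l [-M_i, M_i]$. The hypothesis gives $\int_K \prod_{i=1}^l x_i^{r_i}\, d\mu = \int_K \prod_{i=1}^l x_i^{r_i}\, d\nu$ for every multi-index $(r_1, \dots, r_l)$, so by linearity $\int_K p\, d\mu = \int_K p\, d\nu$ for every polynomial $p$ in $l$ variables. By the Stone--Weierstrass theorem, polynomials are uniformly dense in $C(K)$, and a routine $\epsilon$-approximation then upgrades this to $\int_K f\, d\mu = \int_K f\, d\nu$ for all $f \in C(K)$. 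Since finite Borel measures on a compact metric space are determined by their integrals against continuous functions (Riesz representation), I conclude $\mu = \nu$, and the reduction step above finishes the proof.

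The one step demanding care, rather than a genuine obstacle, is the reduction to finite dimensions when $\Gamma$ is uncountable: one must confirm that ``same distribution'' is faithfully captured by agreement on finite-dimensional cylinders, i.e. that no additional information about the law is hidden in the uncountable product. This is precisely what the $\pi$-$\lambda$ argument secures once everything is phrased in terms of the product $\sigma$-algebra. The remaining ingredients — the common compact support and the Stone--Weierstrass/Riesz passage from moments to measures — constitute the standard determinacy of the compactly supported multivariate moment problem and present no difficulty.
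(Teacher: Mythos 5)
Your proof is correct. Note that the paper itself states this lemma \emph{without proof} (it is cited as a classical fact), so there is no in-paper argument to compare against; your write-up supplies the standard argument, and it is sound. Both halves hold up under scrutiny: the reduction of equality in distribution on $\mathbb{R}^\Gamma$ to equality of finite-dimensional marginals is exactly right, since the finite-dimensional cylinder sets form a $\pi$-system generating the product $\sigma$-algebra (and you correctly flag that for uncountable $\Gamma$ the product $\sigma$-algebra, not the Borel $\sigma$-algebra of the product topology, is the relevant structure); and the finite-dimensional step --- common compact support from the per-coordinate essential supremum bounds, equality of integrals of all monomials extended by linearity to all polynomials, Stone--Weierstrass density in $C(K)$, and the Riesz/regularity fact that Borel probability measures on a compact metric space are determined by integrals against continuous functions --- is the standard resolution of the compactly supported multivariate moment problem. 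One pedantic point you handle implicitly but could make explicit: monomials in which some exponent vanishes are covered by applying the hypothesis to the corresponding smaller subset of $\Gamma$, and the constant monomial is covered because both laws are probability measures; with that remark, every polynomial is indeed in the linear span of the hypothesized moments.
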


We will have need throughout the paper for the notion of a random measure.  In general, we will use $P(\mathfrak{M})$ to denote the (Borel) probability measures defined on a standard Borel space $\mathfrak{M}$.  Similarly, we define $M_+(\mathfrak{M})$ to be the collection of finite nonnegative Borel measures on $\mathfrak{M}$.  In each case, give these spaces the measurable structure generated by the mapping $\mu \mapsto \mu(B), B\in \text{Borel}(\mathfrak{M})$. Since $\mathfrak{M}$ is standard Borel, so is $P(\mathfrak{M})$ with the topology given by vague convergence.  The Borel $\sigma$ algebra determined by vague convergence is an equivalent definition of the measurable structure given to $P(\mathfrak{M})$. (For these facts, see for instance \cite{K}.)  A random measure is then a random variable taking values in either $M_+(\mathfrak{M})$ or $P(\mathfrak{M})$.  We will always deal with Polish spaces when the topology is significant, and standard Borel spaces if not. We almost always deal with probability measures, i.e. except when dealing with L\'evy measures.

From now on we will use 
\begin{equation}\forall s\ge0,\, \mathcal{L}_\mu(s) =\int_{[0,\infty)} e^{-sx}d\mu(x)\end{equation} 
to mean the Laplace transform (at $s\ge0$) of a probability measure $\mu$ on $[0,\infty)$.  Let $P^+=\{\mu \in P(\mathbb{R})|\mu \text{ is supported in } [0,\infty)\}$, which is closed in $P(\mathbb{R})$.  If $\mu$ is random, then the Laplace transform will simply be a random variable. 

In this section, we will use symbols $\alpha, \alpha_1, \alpha_2$ to denote random probability measures and $\Theta, \Theta_1, \Theta_2$ to denote elements of $P(P(\mathbb{R}))$.  Symbols $X_j, Y_j$ will denote real-valued random variables.

\begin{defn}

For each $j\ge1$, let $X_j$ be a real-valued random variable.  Call $X=(X_j)_{j\in\mathbb{N}}$ \textbf{exchangeable} if whenever $\pi:\mathbb{N}\rightarrow \mathbb{N}$ is a finite permutation, we have that 
\begin{equation}(X_j)_{j\in \mathbb{N}}=_d(X_{\pi(j)})_{j\in \mathbb{N}}.\end{equation}
\end{defn}

\begin{defn}
Say that $X$ is \textbf{iid-$\Theta$} if its distribution is given by
\begin{equation}\forall A \in Borel(\mathbb{R}^\mathbb{N}),\, P(X \in A)=\int{\theta^\mathbb{N}(A)\,d\Theta(\theta)}\label{iid-theta}\end{equation}.
\end{defn}

\begin{defn}

Say that $X$ is a \textbf{mixture of iids directed by $\alpha$}, where $\alpha$ is defined on the same probability space as $X$, if $\alpha^\mathbb{N}$ is a regular conditional distribution for $X$ given $\sigma(\alpha)$. 

\end{defn}

By definition of regular conditional distribution this is just the same as saying that whenever $A_1, \dots, A_n \in Borel(\mathbb{R})$ we have for a probability $1$ set of $\omega$,
\begin{equation}P(X_i \in A_i, 1 \le i \le n|\sigma(\alpha))(\omega)= \prod_i \alpha(\omega, A_i).\end{equation}
We now state the well-known theorem of De Finetti.

\begin{thm}[De Finetti] The following are equivalent:
\newline 1. X is exchangeable
\newline 2. X is a mixture of iids directed by some $\alpha$
\newline 3. X is iid-$\Theta$ for some $\Theta$.
\end{thm}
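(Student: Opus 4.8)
The plan is to prove the three-way equivalence by isolating the one substantive implication and disposing of the rest by bookkeeping. Conditions 2 and 3 are really two packagings of the same object: if $X$ is directed by $\alpha$, I would set $\Theta = \mathcal{L}(\alpha) \in P(P(\mathbb{R}))$ and integrate the defining identity $P(X_i \in A_i,\, i \le n \mid \sigma(\alpha)) = \prod_i \alpha(A_i)$ against the law of $\alpha$, using the tower property to obtain $P(X \in A) = \int \theta^{\mathbb{N}}(A)\, d\Theta(\theta)$ on the generating $\pi$-system of cylinder sets, hence everywhere. Conversely, given $\Theta$ I would realize $\alpha$ as the identity (coordinate) random measure on $(P(\mathbb{R}), \Theta)$ and couple it to $X$ through the kernel $\theta \mapsto \theta^{\mathbb{N}}$; here the standard Borel structure on $P(\mathbb{R})$ and the measurability of $\theta \mapsto \theta^{\mathbb{N}}(A)$ are exactly what make the construction legitimate. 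The implications $2 \Rightarrow 1$ and $3 \Rightarrow 1$ are then immediate, since each product law $\theta^{\mathbb{N}}$ is invariant under finite permutations and this invariance survives mixing, so the law of $X$ is permutation-invariant, which is precisely exchangeability.

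The heart of the matter is $1 \Rightarrow 2$, for which I would use the empirical-measure and reverse-martingale construction. Let $\mathcal{E}_n$ be the $\sigma$-algebra of events invariant under permutations of $X_1, \dots, X_n$, let $\mathcal{E} = \bigcap_n \mathcal{E}_n$ be the exchangeable $\sigma$-algebra, and set $\alpha_n = \frac{1}{n}\sum_{j=1}^n \delta_{X_j}$. Exchangeability yields, for each bounded continuous $f$, the reverse-martingale identity $\mathbb{E}[f(X_1) \mid \mathcal{E}_n] = \frac{1}{n}\sum_{j=1}^n f(X_j) = \int f\, d\alpha_n$, so the reverse martingale convergence theorem forces $\int f\, d\alpha_n$ to converge almost surely. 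Letting $f$ range over a countable convergence-determining family and using that $P(\mathbb{R})$ is Polish, I would conclude that $\alpha_n$ converges almost surely in the weak topology to an $\mathcal{E}$-measurable random probability measure $\alpha$.

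It then remains to show that $\alpha$ directs $X$, namely that $\mathbb{E}[\prod_{i=1}^k f_i(X_i) \mid \mathcal{E}] = \prod_{i=1}^k \int f_i\, d\alpha$ for bounded continuous $f_i$, and this factorization is where I expect the real work. Symmetrizing by exchangeability, $\mathbb{E}[\prod_i f_i(X_i) \mid \mathcal{E}_n]$ equals the normalized sum $\frac{(n-k)!}{n!}\sum^{*} \prod_i f_i(X_{j_i})$ over ordered tuples of distinct indices $j_1, \dots, j_k \le n$; comparing this to the full product $\prod_i \int f_i\, d\alpha_n$ and controlling the $O(1/n)$ discrepancy from colliding indices, the limit is $\prod_i \int f_i\, d\alpha$. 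Passing $n \to \infty$ identifies the conditional law of $X$ given $\mathcal{E}$, equivalently given $\sigma(\alpha)$, as $\alpha^{\mathbb{N}}$, which is condition 2. The main obstacle is precisely this combinatorial factorization together with the measure-theoretic bookkeeping needed to upgrade almost-sure convergence along a countable determining class to genuine weak convergence of random measures and to a bona fide regular conditional distribution; the bounded moment problem of Lemma \ref{arbmoment} can be invoked to streamline the final step, certifying that the factorized conditional moments pin down the joint law on bounded test functions.
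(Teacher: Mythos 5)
The paper offers no proof to compare against: it states de Finetti's theorem as classical background (``We now state the well-known theorem of De Finetti''), relying on \cite{A} for the proof, just as it states Lemma \ref{arbmoment} without proof. So your attempt should be judged on its own merits, and on those merits it is essentially the standard reverse-martingale proof (the one in Aldous's notes and in Kallenberg): the identity $\mathbb{E}[f(X_1)\mid \mathcal{E}_n]=\frac{1}{n}\sum_{j\le n}f(X_j)$, reverse martingale convergence along the decreasing filtration $\mathcal{E}_n\downarrow\mathcal{E}$, and the combinatorial factorization $\mathbb{E}[\prod_{i\le k}f_i(X_i)\mid\mathcal{E}_n]=\frac{(n-k)!}{n!}\sum^{*}\prod_i f_i(X_{j_i})$ with an $O(1/n)$ collision correction are all correct and are exactly the right skeleton; your handling of $2\Leftrightarrow 3$ via $\Theta=\mathcal{L}(\alpha)$ and the kernel $\theta\mapsto\theta^{\mathbb{N}}$ is also sound, and the same-space subtlety in $3\Rightarrow 2$ (the paper's definition requires $\alpha$ on the same probability space as $X$) is closed by your cycle $3\Rightarrow 1\Rightarrow 2$, since the $\alpha$ produced in $1\Rightarrow 2$ is $\mathcal{E}$-measurable, hence a function of $X$ itself.

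One step is stated more optimistically than it deserves: almost sure convergence of $\int f\,d\alpha_n$ for $f$ in a countable convergence-determining family does not by itself yield a limiting random \emph{probability} measure, because convergence-determining classes only identify limits that are already known to be probability measures; without a tightness argument, mass could escape to infinity along a null-set-dependent subsequence. The standard repair, which costs nothing, is to define $\alpha$ directly as a regular conditional distribution of $X_1$ given $\mathcal{E}$ (available since $\mathbb{R}$ is standard Borel), and then observe $\int f\,d\alpha=\mathbb{E}[f(X_1)\mid\mathcal{E}]=\lim_n\int f\,d\alpha_n$ a.s.\ for each $f$ in the countable family; the weak convergence $\alpha_n\to\alpha$ then follows rather than being assumed. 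Two smaller remarks: after the factorization you should run a monotone class (or $\pi$-$\lambda$) argument to pass from products of bounded continuous $f_i$ to the cylinder events in the paper's definition of a directing measure, and conditioning on $\mathcal{E}$ versus $\sigma(\alpha)$ is reconciled by the tower property since the factorized conditional expectation $\prod_i\int f_i\,d\alpha$ is $\sigma(\alpha)$-measurable; the appeal to Lemma \ref{arbmoment} at the end is unnecessary for this and is the one place where you reach for machinery the argument does not actually need.
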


\begin{rem} For each exchangeable law $\mu$ on $\mathbb{R}^\mathbb{N}$ there exists a unique law $\Theta$ on $P(\mathbb{R})$ for which any $X$ with distribution $\mu$ is iid-$\Theta$.  This assignment $\mu \mapsto \Theta$ is a bijection from the exchangeable laws on $\mathbb{R}^\mathbb{N}$ to $P(P(\mathbb{R}))$.  In fact, it is a homeomorphism.  Also, if $X$ is exchangeable then its directing measure is unique up to a.s. equality, and the distribution of the RV $\alpha$ is $\Theta$.  Therefore, if the (joint) distribution of $X$ is determined, then the law of $\alpha$ is determined. \label{corresp}
\end{rem}

In the literature, the phrase ``mixing measure'' refers to both $\Theta$ and $\alpha$.  We will use the terminology for $\Theta$ and call $\alpha$ the directing measure.  In contexts when we have two exchangeable sequences $X$ and $Y$, we denote by $\Theta_1, \alpha_1$ the mixing measure, directing measure (respectively) for $X$ and $\Theta_2, \alpha_2$ the mixing measure, directing measure (respectively) for $Y$.  Symbols $\alpha_1, \alpha_2, \alpha$ will always refer directing measures, not arbitrary random measures.

We set $S_n=\sum_{j=1}^n X_j$ and $T_n=\sum_{j=1}^nY_j$.  Our question is then whether $\forall n,\, S_n=_dT_n$ implies $\Theta_1=\Theta_2$ (or $\alpha_1=_d \alpha_2$).

Towards this, let us define:

\begin{defn}
Let $L\subset P(\mathbb{R})$ be measurable.  We will say that $L$ is \textbf{good} provided that whenever $\Theta_1, \Theta_2$ are concentrated on $L$ and $\forall n,\, S_n=_dT_n$, we have $\Theta_1=\Theta_2$.
\end{defn}

To determine if a class $L$ is good or not, we may use the following

\begin{lem}
A measurable class $L\subset P^+$ of distributions is good if and only if $\forall \alpha_1, \alpha_2$ a.s. $L$-valued, we have that 
\begin{equation}\forall s\ge0,\, \mathcal{L}_{\alpha_1}(s)=_d\mathcal{L}_{\alpha_2}(s)\end{equation}
implies
$\Theta_1=\Theta_2$.
\label{equiv}
\end{lem}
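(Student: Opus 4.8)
The plan is to show that the two hypotheses appearing in the definition of \emph{good} and in the statement of the lemma are in fact equivalent conditions on the pair $(\Theta_1,\Theta_2)$; once this is established, the biconditional between goodness and the stated criterion is immediate, since both assert the same conclusion $\Theta_1=\Theta_2$ under equivalent hypotheses. Using Remark \ref{corresp}, the phrase ``$\Theta_i$ concentrated on $L$'' is interchangeable with ``$\alpha_i$ a.s. $L$-valued,'' so the two formulations quantify over the same objects.

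First I would record the fundamental conditioning identity. Since $X$ is a mixture of iids directed by $\alpha_1$, conditioning on $\sigma(\alpha_1)$ and using the tower property gives, for each $s\ge0$ and $n\ge1$,
\begin{equation}
\mathbb{E}\big(e^{-sS_n}\big)=\mathbb{E}\big(\mathbb{E}(e^{-sS_n}\mid\sigma(\alpha_1))\big)=\mathbb{E}\big(\mathcal{L}_{\alpha_1}(s)^n\big),
\end{equation}
because conditionally on $\alpha_1$ the summands $X_1,\dots,X_n$ are iid with common Laplace transform $\mathcal{L}_{\alpha_1}(s)$. The analogous identity holds for $T_n$ and $\alpha_2$. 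Because $L\subset P^+$, the variables $S_n,T_n$ are nonnegative, so their laws are determined by their Laplace transforms; hence $S_n=_dT_n$ holds if and only if $\mathbb{E}(e^{-sS_n})=\mathbb{E}(e^{-sT_n})$ for every $s\ge0$. Combining, the hypothesis $\forall n\, S_n=_dT_n$ is equivalent to
\begin{equation}
\forall n\ge1\ \forall s\ge0,\quad \mathbb{E}\big(\mathcal{L}_{\alpha_1}(s)^n\big)=\mathbb{E}\big(\mathcal{L}_{\alpha_2}(s)^n\big).
\end{equation}

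Next I would swap the two universal quantifiers and argue one value of $s$ at a time. For fixed $s\ge0$, both $\mathcal{L}_{\alpha_1}(s)$ and $\mathcal{L}_{\alpha_2}(s)$ are $[0,1]$-valued, hence bounded, random variables. By the classical bounded moment problem (the one-index case of Lemma \ref{arbmoment}), equality of all their moments is equivalent to equality of their distributions. Therefore the displayed condition is equivalent to $\mathcal{L}_{\alpha_1}(s)=_d\mathcal{L}_{\alpha_2}(s)$ for every $s\ge0$, which is exactly the hypothesis in the statement; the equivalence of the two hypotheses, and hence the lemma, follows.

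The steps needing care, rather than being genuinely hard, are measurability and the conditioning identity. I must check that $\omega\mapsto\mathcal{L}_{\alpha_1(\omega)}(s)$ is a bona fide random variable, so that its moments and law are meaningful; this follows from measurability of the evaluation map $\mu\mapsto\mathcal{L}_\mu(s)$ on $P^+$. I must also verify that conditional independence of the $X_j$ given $\sigma(\alpha_1)$ yields $\mathbb{E}(e^{-sS_n}\mid\sigma(\alpha_1))=\mathcal{L}_{\alpha_1}(s)^n$ almost surely. The only analytic input beyond this bookkeeping is the injectivity of the Laplace transform on finite measures supported in $[0,\infty)$, which is standard, so I expect the quantifier interchange combined with the fiberwise moment problem to be the conceptual crux.
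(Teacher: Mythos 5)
Your proposal is correct and takes essentially the same route as the paper's proof: the same chain of equivalences, passing from $S_n=_dT_n$ to equality of Laplace transforms of the nonnegative variables $S_n,T_n$, then to $\mathbb{E}\big(\mathcal{L}_{\alpha_1}(s)^n\big)=\mathbb{E}\big(\mathcal{L}_{\alpha_2}(s)^n\big)$ via the directing-measure identity, and finally to $\mathcal{L}_{\alpha_1}(s)=_d\mathcal{L}_{\alpha_2}(s)$ for each fixed $s$ by the classical bounded moment problem. Your explicit treatment of the tower-property step, the identification of the hypotheses via Remark \ref{corresp}, and the measurability of $\omega\mapsto\mathcal{L}_{\alpha_1(\omega)}(s)$ merely spells out what the paper leaves implicit.
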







\begin{proof}

We have that the statement $\forall n,\, S_n=_dT_n$ is equivalent to the statement

\begin{equation}\forall n, s\ge0,\, \mathbb{E}[e^{-sS_n}]=\mathbb{E}[e^{-sT_n}],\end{equation}
which, using the definition of a directing measure, is equivalent to

\begin{equation}\forall n, s\ge0,\, \mathbb{E}\left[\left(\int_{[0, \infty)}e^{-sx}d\alpha_1(x)\right)^n\right]=\mathbb{E}\left[\left(\int_{[0, \infty)}e^{-sx}d\alpha_2(x)\right)^n\right].\end{equation}
By the classical bounded moment problem, this is equivalent to 
\begin{equation}\forall s \ge0,\, \mathcal{L}_{\alpha_1}(s)=_d \mathcal{L}_{\alpha_2}(s).\end{equation}
\end{proof}

We do not assert the analogous statement for MGFs (moment generating functions) because we have no need for it even though it is true under suitable boundedness hypotheses. We do not assert a statement for the characteristic function case because the characteristic function would be a complex-valued random variable, and one would need the joint distribution of the real and imaginary parts. The absence of an analogous lemma for characteristic functions is related to the absence of a bounded moment problem for complex-valued RVs that does not involve knowing any conjugate moments. This can be thought of as a heuristic reason for the shift in focus away from signed random variables in the recovery problems we consider in this article.

\subsection{Known Results}

In this section, we will discuss results known beforehand.  Using the conditional SLLN, it follows that

\begin{prop}
$P(\{0, 1\})$ is good.
\end{prop}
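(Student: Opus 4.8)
The plan is to reduce the claim to a statement about the one-parameter family of Bernoulli laws and then invoke the strong law of large numbers conditionally. The space $P(\{0,1\})$ is in natural bijection with $[0,1]$: a measure $\theta \in P(\{0,1\})$ is determined by $p = \theta(\{1\})$, so $\theta = (1-p)\delta_0 + p\delta_1$. Under this identification a directing measure $\alpha_1$ that is a.s. $P(\{0,1\})$-valued corresponds to a random parameter $p_1 \in [0,1]$, and by Remark \ref{corresp} the mixing measure $\Theta_1$ is exactly the law of $p_1$ (transported to $P(\{0,1\})$ via $p \mapsto (1-p)\delta_0 + p\delta_1$); likewise $\alpha_2 \leftrightarrow p_2$ and $\Theta_2 = \mathrm{law}(p_2)$. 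Thus it suffices to prove $p_1 =_d p_2$.

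Next I would run the conditional SLLN. Since $\alpha_1^{\mathbb{N}}$ is a regular conditional distribution for $X$ given $\sigma(\alpha_1)$, conditionally on $\{p_1 = p\}$ the coordinates $X_j$ are iid $\mathrm{Ber}(p)$. The ordinary SLLN gives $S_n/n \to p$ almost surely under $\mathrm{Ber}(p)^{\mathbb{N}}$ for each fixed $p$; integrating these conditional probabilities against $\Theta_1$ shows that $S_n/n \to p_1$ almost surely on the full space. Almost sure convergence implies convergence in distribution, so $\mathrm{law}(S_n/n) \Rightarrow \mathrm{law}(p_1) = \Theta_1$ weakly. By the identical argument applied to $Y$, one gets $\mathrm{law}(T_n/n) \Rightarrow \Theta_2$.

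Finally I would invoke the hypothesis. For each fixed $n$, $S_n =_d T_n$ implies $S_n/n =_d T_n/n$, since scaling by the deterministic constant $1/n$ is a measurable bijection. Hence the two weakly convergent sequences of laws $(\mathrm{law}(S_n/n))_n$ and $(\mathrm{law}(T_n/n))_n$ are literally the same sequence and therefore share the same weak limit; uniqueness of weak limits forces $\Theta_1 = \Theta_2$, as desired.

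The step I expect to require the most care is the passage from the conditional to the unconditional strong law: one must verify that the event on which $S_n/n$ fails to converge to $p_1$ is genuinely null after integrating the conditional probabilities against $\Theta_1$, which relies on the measurability of the relevant events and on $\alpha_1^{\mathbb{N}}$ being a bona fide regular conditional distribution; everything else is routine. As a remark, because $P(\{0,1\}) \subset P^+$, the Laplace-transform criterion of Lemma \ref{equiv} is also available: since $\mathcal{L}_{\alpha_1}(s) = 1 - p_1(1 - e^{-s})$ is an affine, hence injective, function of $p_1$ for any fixed $s > 0$, equality in distribution of $\mathcal{L}_{\alpha_1}(s)$ and $\mathcal{L}_{\alpha_2}(s)$ at a single such $s$ already yields $p_1 =_d p_2$, giving an alternative proof that avoids the limit argument entirely.
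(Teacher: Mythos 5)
Your proof is correct and is exactly the paper's intended argument: the paper establishes this proposition with the one-line remark ``Using the conditional SLLN, it follows that,'' and your write-up simply fills in the details of that conditional strong-law argument, identifying $P(\{0,1\})$ with $[0,1]$ and extracting $\Theta_1=\Theta_2$ from the common weak limit of $S_n/n$ and $T_n/n$. Your closing remark via Lemma \ref{equiv} (affineness of $\mathcal{L}_{\alpha}(s)$ in $p$ at a single $s>0$) is also a valid, even slicker, alternative, but the main route matches the paper.
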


In Section 2 of \cite{EZ}, it is shown that

\begin{prop}
$P(\mathbb{R})$ is not good.
\end{prop}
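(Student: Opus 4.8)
The plan is to run the argument of Lemma \ref{equiv} with the characteristic function in place of the Laplace transform, which is the natural tool once the measures are allowed to charge all of $\mathbb{R}$. For a (possibly random) probability measure $\alpha$ write $\phi_\alpha(t)=\int_{\mathbb{R}}e^{itx}\,d\alpha(x)$ for its characteristic function, a random variable taking values in the closed unit disk. Since $X$ is conditionally i.i.d.\ given $\alpha_1$, we have $\mathbb{E}[e^{itS_n}]=\mathbb{E}[\phi_{\alpha_1}(t)^n]$, and likewise for $T_n$ with $\alpha_2$; as characteristic functions determine laws, it suffices, in order to show that $P(\mathbb{R})$ is not good, to exhibit two mixing measures $\Theta_1\neq\Theta_2$ for which
\begin{equation}
\forall n\ge 1,\ \forall t\in\mathbb{R},\quad \mathbb{E}[\phi_{\alpha_1}(t)^n]=\mathbb{E}[\phi_{\alpha_2}(t)^n].
\end{equation}
Note that only this one easy implication is needed, so none of the subtleties about conjugate moments flagged after Lemma \ref{equiv} arise.

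My first reduction would be to rephrase the displayed condition measure-theoretically. Using $\phi_\theta(t)^n=\phi_{\theta^{*n}}(t)$, where $\theta^{*n}$ is the $n$-fold convolution, and setting $\Xi=\Theta_1-\Theta_2$ (a nonzero finite signed measure of total mass $0$ on $P(\mathbb{R})$, realized as a difference of two probability measures), the condition becomes that the signed measure $\int_{P(\mathbb{R})}\theta^{*n}\,d\Xi(\theta)$ on $\mathbb{R}$ vanishes for every $n\ge 1$. Thus the whole problem reduces to producing a single nonzero $\Xi$ of this form, all of whose ``convolution-power barycenters'' vanish. Equivalently, for each fixed $t$ one wants the two disk-valued random variables $\phi_{\alpha_1}(t)$ and $\phi_{\alpha_2}(t)$ to have identical moments $\mathbb{E}[\zeta^n]$ for every $n\ge 1$ while having different laws---that is, one wants to realize the indeterminacy of the \emph{complex} (holomorphic) moment problem coherently in $t$.

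Here the contrast with the nonnegative case becomes the guiding heuristic: for $\alpha$ supported in $[0,\infty)$ the relevant object is the real, positive, monotone Laplace transform, whose moment problem is determinate, whereas for signed variables $\phi_\alpha(t)$ oscillates and its moment problem is not. The concrete mechanism I would exploit is that a uniform phase annihilates all positive moments: if $\zeta=\rho\,e^{i\Phi}$ with $\Phi$ uniform on the circle, then $\mathbb{E}[\zeta^n]=0$ for all $n\ge 1$ regardless of $\rho$, so two such $\zeta$ with different radial laws are indistinguishable by holomorphic moments. The plan is to build $\Theta_1,\Theta_2$ from a genuinely infinite family of distinct component laws arranged so that this cancellation occurs simultaneously at every $t$.

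I expect the construction itself to be the main obstacle, for a structural reason already visible in the reduction. Any finite mixture is rigid: if $\Theta_1,\Theta_2$ are supported on finitely many distinct laws, then choosing $t_0$ at which the finitely many values $\phi_\theta(t_0)$ are distinct and applying a Vandermonde/Newton power-sum argument, the matching of all moments $n\ge 1$ together with the automatic equality of total mass (the case $n=0$) forces $\Theta_1=\Theta_2$. The same rigidity afflicts pure translation families, where the case $n=1$ (which only asserts $X_1=_dY_1$, i.e.\ equality of the mean directing measures) already determines the mixing law. Consequently a literal uniform phase at every $t$ would require an infinite, Lebesgue-type mixing, and the real work is to replace it by a bona fide pair of probability measures on $P(\mathbb{R})$ whose barycenters $\int\theta^{*n}\,d\Xi$ all cancel for $n\ge 1$, and then to check that the resulting mixing measures are genuinely distinct in $P(P(\mathbb{R}))$.
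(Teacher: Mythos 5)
There is a genuine gap: your proposal never exhibits the counterexample, and the proposition is precisely an existence claim. Your reductions are sound as far as they go --- the equivalence of $\forall n,\, S_n=_dT_n$ with $\forall n, t,\ \mathbb{E}[\phi_{\alpha_1}(t)^n]=\mathbb{E}[\phi_{\alpha_2}(t)^n]$ is correct (and you rightly note only the easy direction is needed), as is the reformulation via a signed measure $\Xi$ with vanishing convolution-power barycenters, and the motivation via indeterminacy of the conjugate-free complex moment problem is exactly the heuristic the paper flags after Lemma \ref{equiv}. But the uniform-phase mechanism is only a heuristic: as you yourself observe, it cannot be realized literally (at $t=0$ the variable is the constant $1$), finitely supported mixing measures and translation families are rigid, and your final sentence concedes that ``the real work'' of producing a bona fide pair $\Theta_1\neq\Theta_2$ remains. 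A proof attempt that ends where the witness should begin has not proved the statement. (Even your rigidity sanity checks need care: a single $t_0$ at which finitely many characteristic functions are pairwise distinct \emph{and} nonzero need not exist, since distinct characteristic functions can agree on intervals, and the Vandermonde argument requires nonzero nodes.)

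For comparison: the paper does not reprove this proposition at all --- it is quoted from Section 2 of \cite{EZ}. Within the paper's own development, the stronger Theorem \ref{0123} (a uniformly bounded, nonnegative counterexample with four-point support, built from Lemma \ref{cramer} via Proposition \ref{example maker}) implies it, since subsets of good sets are good. It may interest you that your characteristic-function scheme can in fact be closed using that machinery rather than an infinite ``Lebesgue-type'' mixture: taking $U\neq_d V$ from Proposition \ref{example maker}, the quantity $\mathbb{E}[(c_4(y)\cdot U)^n]-\mathbb{E}[(c_4(y)\cdot V)^n]$ is a polynomial in $y$ vanishing for all real $y$, hence vanishing identically, hence at $y=e^{it}$; so the measures $\mu=\sum_j U_j\delta_j$, $\nu=\sum_j V_j\delta_j$ satisfy your displayed moment condition $\mathbb{E}[\phi_\mu(t)^n]=\mathbb{E}[\phi_\nu(t)^n]$ for all $n,t$ while $\mu\neq_d\nu$. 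In other words, the obstruction you were hunting for is the algebraic one (a projective variety containing the moment curve, Lemma \ref{cramer}), not a phase-cancellation one, and it already lives on four nonnegative atoms --- consistent with Proposition \ref{ptmass}, which only rules out countably supported $\Theta$'s, not $\Theta$'s spread over a continuum of laws.
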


In Section 3 of the same paper, it is shown that a positive result can still be salvaged.

\begin{prop}
If $\forall n,\, S_n=_dT_n$, and $\Theta_1, \Theta_2$ are purely atomic, concentrated in $P^+$ then $X=_dY$. \label{ptmass}  In other words, any countable subset of $P^+$ is good.
\end{prop}

Interestingly enough, Muntz's Theorem, which arises in Lemma 3.3 of \cite{EZ}, can be replaced by the theorem of complex analysis stating that holomorphic functions defined on a connected open set agree just as soon as they agree on a set of points that accumulates within the domain.  

Their assumption is of a different nature than what we will consider; they restrict the size of the sets where $\Theta_1, \Theta_2$ are concentrated rather than requiring, as we will, that $\Theta_1, \Theta_2$ are concentrated on a nice set of distributions that may be a continuum.  Considering that the general problem has been solved in the negative, we will primarily address restrictions of the latter type, hence the terminology of such a set $L$ being ``good''.  A particularly interesting case is when $L$ is contained in $P^+$.

For example, let $\mathcal{P}$ denote the collection of distributions in $P(\mathbb{R})$ which are the distribution probability measures for Poisson RVs. (So $\mu \in \mathcal{P}$ should satisfy $\forall k \ge0,\, \mu(\{k\})=e^{-\lambda}\lambda^k/k!$ for some $\lambda\ge0$.)  Similarly, define $\mathcal{E}$ for exponentials, $\mathcal{G}$ for geometrics, and $\mathcal{B}_n$ for binomials with parameter $n$ fixed and $p \in[0, 1]$ varying.  It is an exercise in analysis to see that:

\begin{rem}
$\mathcal{B}_n, \mathcal{G}, \mathcal{E}, \mathcal{P}$ are all good. \label{BGEP1}
\end{rem}

\subsection{The Relationship Between the Aldous Question and Arithmetic Properties of the Value Set}

In this subsection, we will present one substantiation of the heuristic that the more arithmetic dependences there are in the allowed value set for $X_1$, the harder it is to recover $X$ uniquely.  Fix $A=\{a_1, \dots, a_N \}$ when $N \in \mathbb{N}$ or $A=\{a_1, \dots, \}$ when $N=\infty$.  Assume, for technical reasons, that $A$ has no accumulation points in $\mathbb{R}$.  We allow negative values in $A$.  Define $F_A\subset P(\mathbb{R})$ to be those distributions that are supported in $A$. (We can say ``supported'' instead of ``concentrated'', because $A$ is closed.  $F_A$ is closed because $A$ is.) We set $L:=F_A$.   For $X$ exchangeable, it is equivalent to say that $X_1\in A$ a.s. or to say $\alpha \in L$ a.s. or to say $\Theta$ is supported on $L$.

\begin{thm}
$F_A$ is good if $A$ is linearly independent over $\mathbb{Q}$. \label{arithmetic independence}
\end{thm}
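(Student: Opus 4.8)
The plan is to convert the equality of the partial-sum laws into equality of \emph{all} joint moments of the random weight sequences carried by $\alpha_1$ and $\alpha_2$, using the $\mathbb{Q}$-linear independence of $A$ precisely to separate the distinct atoms of each $S_n$ from one another. First I would represent the directing measures concretely. Since $\alpha_1,\alpha_2$ are a.s.\ supported on the discrete set $A$, I would write $\alpha_1=\sum_k p_k\delta_{a_k}$ and $\alpha_2=\sum_k q_k\delta_{a_k}$, where $p_k=\alpha_1(\{a_k\})\in[0,1]$ and $q_k=\alpha_2(\{a_k\})\in[0,1]$ are random weights summing to $1$. The map $(p_k)\mapsto\sum_k p_k\delta_{a_k}$ is a Borel isomorphism onto its image (its inverse is $\mu\mapsto(\mu(\{a_k\}))_k$, which is measurable because the $a_k$ are distinct; the no-accumulation hypothesis keeps $A$ discrete in the infinite case), so by Remark \ref{corresp} the law $\Theta_1$ of $\alpha_1$ is determined by, and determines, the law of the bounded random vector $(p_k)_k$, and likewise for $\Theta_2$ and $(q_k)_k$. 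It therefore suffices to show $(p_k)=_d(q_k)$.

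Next I would unpack the hypothesis $S_n=_d T_n$. Conditioned on $\alpha_1$, the sum $S_n$ is a sum of $n$ i.i.d.\ $\alpha_1$-distributed variables, so its conditional law is multinomial on the $n$-fold sumset of $A$: for each multi-index $\mathbf{m}=(m_k)$ with $m_k\ge 0$ and $\sum_k m_k=n$, the value $v(\mathbf{m})=\sum_k m_k a_k$ occurs with conditional probability $\binom{n}{\mathbf{m}}\prod_k p_k^{m_k}$. This is where $\mathbb{Q}$-linear independence enters: it forces the map $\mathbf{m}\mapsto v(\mathbf{m})$ to be injective on multi-indices of total mass $n$, since $\sum_k(m_k-m_k')a_k=0$ with rational coefficients implies $\mathbf{m}=\mathbf{m}'$. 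Consequently the atom of $S_n$ at $v(\mathbf{m})$ receives no contribution from any other multi-index, and taking expectations gives $P(S_n=v(\mathbf{m}))=\binom{n}{\mathbf{m}}\,\mathbb{E}\!\left[\prod_k p_k^{m_k}\right]$. Since $S_n=_d T_n$ and the multinomial coefficient is a known positive constant, I obtain
\[
\mathbb{E}\!\left[\prod_k p_k^{m_k}\right]=\mathbb{E}\!\left[\prod_k q_k^{m_k}\right],
\]
and letting $n$ and $\mathbf{m}$ range over all choices recovers every joint moment of $(p_k)$ against every joint moment of $(q_k)$.

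Finally, since the $p_k$ and $q_k$ lie in $[0,1]$, the vectors are coordinatewise bounded, and I would invoke the classical bounded moment problem (Lemma \ref{arbmoment}, with $\Gamma=\{1,\dots,N\}$ or $\Gamma=\mathbb{N}$) to conclude from the matching joint moments that $(p_k)=_d(q_k)$, hence $\Theta_1=\Theta_2$, proving $F_A$ is good.

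I expect the only genuine content to be the injectivity step: $\mathbb{Q}$-linear independence is exactly the condition that prevents distinct size-$n$ multisets from colliding to the same sum, which is what lets me isolate a single monomial in the $p_k$ from each atom of $S_n$. (Heuristically this is the paper's arithmetic-dependence theme: a rational dependence among the $a_k$ would fuse several monomials into one atom, so the law of $S_n$ would reveal only certain linear combinations of moments rather than each moment separately.) The remaining points are routine bookkeeping: verifying the multinomial conditional law and the Borel identification of $\Theta_i$ with the law of its weight vector, and the direct appeal to the bounded moment problem. Notably I would not route this through Lemma \ref{equiv} or the Laplace transform, which is fortunate since $A$ may contain negative values and thus $F_A\not\subset P^+$; working directly with the discrete atoms of $S_n$ sidesteps the nonnegativity restriction entirely.
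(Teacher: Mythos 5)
Your proposal is correct and is essentially the paper's own argument: both use $\mathbb{Q}$-linear independence to make the map from size-$n$ multi-indices to atoms $\sum_k m_k a_k$ of $S_n$ injective, thereby reading off each joint moment $\mathbb{E}\bigl[\prod_k \alpha_i(\{a_k\})^{m_k}\bigr]$ from $P(S_n=v(\mathbf{m}))$ up to a multinomial coefficient, and both then conclude via the classical bounded moment problem (Lemma \ref{arbmoment}). Your added details (the multinomial conditional law, the Borel identification of $\Theta_i$ with the law of the weight vector, and the explicit observation that this route avoids Lemma \ref{equiv} and hence the nonnegativity restriction) are correct elaborations of steps the paper leaves implicit, not a different method.
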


\begin{proof}
It suffices to handle the case in which $A$ is discrete, infinite, i.e. $N=\infty$, because subsets of good sets are always good.  We have $X, Y$ exchangeable sequences, which therefore comes with directing measures and mixing measures by Remark \ref{corresp}.  Since we are trying to prove goodness, we assume $\forall n \ge0,\, S_n=_dT_n$. (In future proofs of goodness, we not explicitly mention this.)  We need to see that $\alpha_1=_d \alpha_2$.  It suffices to see that $(\alpha_1(\{a_j\}))_{j\in \mathbb{N}}=_d(\alpha_2(\{a_j\}))_{j\in \mathbb{N}}$

Suppose that $M\in\mathbb{N}$.  Then by using the linear independence hypothesis for our value set and the properties of directing measures, we have that $\forall r_1, \dots, r_M$, with $s={\sum_{j=1}^M r_i}$

\begin{multline*}\binom{s}{r_1,r_2,\dotsc, r_M}\mathbb{E}(\prod_{i=1}^M (\alpha_1\{a_i\})^{r_i})=\mathbb{P}(S_s=\sum_{i=1}^s r_ia_i)=\\
=\mathbb{P}(T_s=\sum_{i=1}^s r_ia_i)=\binom{s}{r_1, r_2,\dotsc, r_M}\mathbb{E}(\prod_{i=1}^M (\alpha_2\{a_i\})^{r_i}).\numberthis\end{multline*}
Here, the first and third equality hold because the linear independence hypothesis guarantees that the only way that a sum of elements from $A$ can be $\sum_{i=1}^s r_ia_i$ is if $r_i$ copies of $a_i$ are used for each $i\le M$.

Now lemma \ref{arbmoment} completes the proof.
\end{proof}

It is also acceptable to have a few arithmetic dependences:

\begin{prop}
 $F_{\{0, 1, 2\}}$ is good. More generally, let $\mu_1, \mu_2, \mu_3 \in P^+$ and let $\mathcal{L}_1, \mathcal{L}_2, \mathcal{L}_3$ denote their Laplace transforms respectively.  Let 
\begin{equation}L:=\{a_1\mu_1+a_2\mu_2+a_3\mu_3|a_1+a_2+a_3=1, a_1, a_2, a_3\ge0\}.\end{equation}
Then $L$ is good.\label{3 value}
 
\end{prop}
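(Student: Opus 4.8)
The plan is to use Lemma \ref{equiv} to recast the whole statement as a problem about recovering the joint law of a random vector of mixing weights from the laws of a one‑parameter family of its linear images. By Lemma \ref{equiv} it suffices to show that if $\alpha_1,\alpha_2$ are a.s. $L$‑valued and $\mathcal{L}_{\alpha_1}(s)=_d\mathcal{L}_{\alpha_2}(s)$ for every $s\ge 0$, then $\Theta_1=\Theta_2$. Since each element of $L$ is a convex combination, I would write $\alpha_i=A_1^{(i)}\mu_1+A_2^{(i)}\mu_2+A_3^{(i)}\mu_3$ for a random vector $(A_1^{(i)},A_2^{(i)},A_3^{(i)})$ supported in the $2$‑simplex, so that $\mathcal{L}_{\alpha_i}(s)=A_1^{(i)}\mathcal{L}_1(s)+A_2^{(i)}\mathcal{L}_2(s)+A_3^{(i)}\mathcal{L}_3(s)$, an affine image of the weight vector with deterministic, $s$‑dependent coefficients.

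First I would dispose of the degenerate case where $\mu_1,\mu_2,\mu_3$ are affinely dependent as signed measures. Then $L$ is a single point or a line segment, parametrized by one real coordinate $T_i$, and $\mathcal{L}_{\alpha_i}(s)$ is an affine function of $T_i$; for any single $s$ at which the endpoint Laplace transforms differ, the law of $\mathcal{L}_{\alpha_i}(s)$ determines the law of $T_i$, giving $\alpha_1=_d\alpha_2$ at once. In the main, affinely independent, case the weights are recovered measurably from the measure (the affine map from the simplex is a homeomorphism onto $L$), so $\Theta_1=\Theta_2$ is equivalent to $(A_1^{(1)},A_2^{(1)})=_d(A_1^{(2)},A_2^{(2)})$, using $A_3=1-A_1-A_2$. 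Setting $c_j(s)=\mathcal{L}_j(s)-\mathcal{L}_3(s)$ and subtracting the deterministic quantity $\mathcal{L}_3(s)$, the hypothesis becomes
\[ c_1(s)A_1^{(1)}+c_2(s)A_2^{(1)} =_d c_1(s)A_1^{(2)}+c_2(s)A_2^{(2)} \qquad \text{for all } s\ge 0. \]
Because all of these variables are bounded (the weights lie in $[0,1]$), equality in law forces equality of all moments; expanding the $n$‑th moment yields the identity $Q_n(c_1(s),c_2(s))=0$ for every $n$ and $s$, where $Q_n$ is the homogeneous degree‑$n$ binary form whose coefficients are the differences of the joint moments $\mathbb{E}[(A_1)^k(A_2)^{n-k}]$ of the two sequences.

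The crux is then to show every $Q_n$ vanishes identically. Here I would use that $c_1=\mathcal{L}_{\mu_1-\mu_3}$ and $c_2=\mathcal{L}_{\mu_2-\mu_3}$ are real‑analytic on $(0,\infty)$ and, by injectivity of the Laplace transform on finite signed measures, linearly independent as functions (this independence is precisely affine independence of the $\mu_j$). Consequently the curve $s\mapsto(c_1(s),c_2(s))$ cannot lie in finitely many lines through the origin: otherwise $(0,\infty)$ would be covered by countably many zero sets of nontrivial analytic functions, which is impossible. Hence the curve meets infinitely many distinct directions in $\mathbb{P}^1$. A degree‑$n$ binary form vanishing in more than $n$ distinct directions is identically zero, so $Q_n\equiv 0$ for every $n$, i.e.\ all joint moments of the two weight vectors agree. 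Since the weights are bounded, the multivariate bounded moment problem, Lemma \ref{arbmoment}, upgrades this to $(A_1^{(1)},A_2^{(1)})=_d(A_1^{(2)},A_2^{(2)})$, which gives $\Theta_1=\Theta_2$.

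I expect the main obstacle to be exactly the passage from marginal (single‑$s$) to joint information: concretely, the analytic argument that the direction curve $s\mapsto[c_1(s):c_2(s)]$ is nondegenerate, together with the reduction establishing that $c_1,c_2$ are linearly independent if and only if $\mu_1,\mu_2,\mu_3$ are affinely independent. Once the weight decomposition is in place and this nondegeneracy is secured, the rest is bookkeeping with bounded moments.
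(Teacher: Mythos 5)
Your proof is correct, and its skeleton coincides with the paper's: both go through Lemma \ref{equiv}, parametrize $L$ by the simplex via $(a,b,c)\mapsto a\mu_1+b\mu_2+c\mu_3$, use that the weight map is a homeomorphism in the nondegenerate case, and subtract the deterministic $\mathcal{L}_3(s)$ to reduce to bounded $\mathbb{R}^2$-valued vectors paired against $(c_1(s),c_2(s))=(\mathcal{L}_1(s)-\mathcal{L}_3(s),\mathcal{L}_2(s)-\mathcal{L}_3(s))$. Where you genuinely diverge is the closing step. The paper shows that the vectors $c\,(c_1(s),c_2(s))$, $c\in\mathbb{R}$, $s\ge0$, fill a nonempty open subset of $\mathbb{R}^2$ (justified rather tersely by a derivative comparison at some $s_0$, resting on injectivity of the Laplace transform) and then applies the identity theorem for the bivariate real-analytic MGFs of the two reduced vectors. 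You instead expand the $n$-th moments at each fixed $s$ into homogeneous binary forms $Q_n$, argue that the direction curve $s\mapsto[c_1(s):c_2(s)]$ takes infinitely many values in $\mathbb{P}^1$ (a finite set of directions would cover an uncountable interval by countably many zeros of the nontrivial analytic functions $b\,c_1-a\,c_2$, using the linear independence of $c_1,c_2$ that encodes affine independence of the $\mu_j$), conclude $Q_n\equiv0$ since a degree-$n$ binary form has at most $n$ root directions, and finish with the bounded moment problem, Lemma \ref{arbmoment}. This is in effect an inlined specialization of Lemma \ref{cramer} to $d=2$: you verify that the curve is not contained in a projective variety of $\mathbb{R}^2$ (a finite union of lines through the origin), so you could have cited that lemma directly. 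Your route buys two things: it needs only infinitely many distinct directions rather than a full open set, with a countability argument that is more airtight than the paper's one-line derivative remark, and it treats the degenerate, affinely dependent case explicitly by a one-parameter segment argument, where the paper dispatches it with the observation that subsets of good sets are good. Both proofs ultimately rest on the same nondegeneracy input, namely injectivity of the Laplace transform on finite signed measures supported in $[0,\infty)$.
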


\begin{proof}
We prove the more general claim.  We may assume that no strict subset of $\{\mu_1, \mu_2, \mu_3\}$ has convex hull containing all of $\mu_1, \mu_2, \mu_3$ because subsets of good sets are good.  From this, it follows that $L$ is homeomorphic to $T_3:=\{(a, b, c)|a+b+c=1, a, b, c\ge0\}$ because the map taking $(a, b, c) \mapsto a\mu_1+b\mu_2+c\mu_3$ is invertible, and therefore because $T_3$ and $L$ are both compact Hausdorff, they are homeomorphic via this assignment, hence measurably isomorphic.

Proceeding by way of Lemma \ref{equiv}, and using the homeomorphism above, we are reduced to showing that if $U, V$ are $T_3$-valued random vectors for which $\forall s\ge0$ we have 
\begin{equation}(\mathcal{L}_1(s), \mathcal{L}_2(s), \mathcal{L}_3(s))\cdot U=_d(\mathcal{L}_1(s), \mathcal{L}_2(s), \mathcal{L}_3(s))\cdot V\end{equation}
then $U=_d V$.  That this is enough follows from the fact that when $U, V$ are the pushforwards of $\alpha_1, \alpha_2$ via the homeomorphism above, the left side has the distribution of the (random) Laplace transform of $\alpha_1$ evaluated at $s$, and the right side has the distribution of the (random) Laplace transform of $\alpha_2$ evaluated at $s$.

Because $U, V$ are probability vectors, it suffices to show that if $U', V'$ are bounded random vectors in $\mathbb{R}^2$ with
\begin{equation}(\mathcal{L}_1(s)-\mathcal{L}_3(s), \mathcal{L}_2(s)-\mathcal{L}_3(s))\cdot U'=_d(\mathcal{L}_1(s)-\mathcal{L}_3(s), \mathcal{L}_2(s)-\mathcal{L}_3(s))\cdot V'\end{equation}
then $U'=_dV'$.

We will check that the collection of $v$'s of the form
\begin{equation}\label{form}v=c(\mathcal{L}_1(s)-\mathcal{L}_3(s), \mathcal{L}_2(s)-\mathcal{L}_3(s))\end{equation}
cover a nonempty open set as $s\ge0$ and $c\in \mathbb{R}$ vary.  We know that $\mu_1-\mu_3$ is a signed measure on $[0, \infty)$ and as is $\mu_2-\mu_3$.  We may regard $\mathcal{L}_1(s)-\mathcal{L}_3(s)$ and $\mathcal{L}_2(s)-\mathcal{L}_3(s)$ as the Laplace transforms of these signed measures respectively.  Therefore, a nonempty open set of $v$'s are of the form \eqref{form}.  For instance, one may check that the derivatives of $\mathcal{L}_1(s)-\mathcal{L}_3(s)$ and $\mathcal{L}_2(s)-\mathcal{L}_3(s)$ must be different at some $s_0>0$, for if they were the same, then $\mu_1-\mu_3$ and $\mu_2-\mu_3$ would have the same Laplace transform.

Bivariate analytic functions agreeing on a nonempty open set in $\mathbb{R}^2$ have to agree, and we apply this to the bivariate MGF of $U'$ and that of $V'$.
\end{proof}





\subsection{Four Values are Too Many}

We will see our first counterexample to the nonnegative Aldous problem, proving the claim

\begin{prop}
$P^+$ is not good.
\end{prop}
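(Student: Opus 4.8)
The plan is to apply Lemma \ref{equiv} on the four–point support set $\{0,1,2,3\}$: I will exhibit two distinct mixing measures $\Theta_1\ne\Theta_2$, each concentrated on $F_{\{0,1,2,3\}}\subset P^+$, whose induced random Laplace transforms satisfy $\mathcal{L}_{\alpha_1}(s)=_d\mathcal{L}_{\alpha_2}(s)$ for every $s\ge0$. Writing a directing measure valued in $F_{\{0,1,2,3\}}$ as a random probability vector $(P_0,P_1,P_2,P_3)$ and setting $x=e^{-s}$, the random Laplace transform is the random cubic $P_0+P_1x+P_2x^2+P_3x^3$, so by Lemma \ref{equiv} it suffices to find two laws on the simplex $\Delta=\{p_i\ge0,\ \sum p_i=1\}$ making this cubic's distribution the same for each fixed $x\in(0,1)$. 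Eliminating $P_0=1-P_1-P_2-P_3$ and dropping the deterministic constant reduces this to matching, for each $x$, the law of $\langle(P_1,P_2,P_3),(x-1,x^2-1,x^3-1)\rangle$. Since $(x-1,x^2-1,x^3-1)=(x-1)(1,x+1,x^2+x+1)$ and the scalar $x-1$ is irrelevant, the linear substitution $W_0=P_1+P_3,\ W_1=P_2-P_3,\ W_2=P_3$ (invertible) turns the pairing into $W_0+W_1a+W_2a^2$ with $a=x+1$. The problem becomes: produce two laws on $\mathbb{R}^3$, supported in the image of $\Delta$, giving the same distribution of $W_0+W_1a+W_2a^2$ for all $a$ in an interval $I$, but with different joint law.

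The conceptual core is a dimension count locating the boundary between this proposition and Proposition \ref{3 value}. Matching the law of $\langle W,(1,a,a^2)\rangle$ for each $a$ is equivalent, via characteristic functions, to the vanishing of the Fourier transform $\hat\nu$ of the signed measure $\nu:=\Theta_1-\Theta_2$ on the cone $C=\{c(1,a,a^2):c\in\mathbb{R},\,a\in I\}$. In the three–value case the analogous cone is $\{c(1,a):c\in\mathbb{R},\,a\in I\}\subset\mathbb{R}^2$, a two–dimensional (open) subset of the plane, so an entire function vanishing there vanishes identically and forces $\nu=0$; this is precisely the open–set MGF argument of Proposition \ref{3 value}. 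With four values the cone lies in $\mathbb{R}^3$ and satisfies $t_1^2=t_0t_2$, so it is a piece of a genuine quadric hypersurface, leaving a codimension–one variety on which $\hat\nu$ can vanish nontrivially. Recognizing that this single quadric relation $t_1^2=t_0t_2$ is exactly the arithmetic/algebraic dependence created by passing from three to four values is the heart of the matter.

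To build the counterexample I use the defining polynomial of the quadric directly. Let $\mathcal{Q}(t)=t_1^2-t_0t_2$, with associated constant–coefficient operator $P=\partial_{w_1}^2-\partial_{w_0}\partial_{w_2}$, and choose a smooth bump $g$ supported in a small ball inside the interior of the image of $\Delta$ and generic enough that $\rho:=Pg\ne0$ (a product bump suffices). Then $\hat\rho=-\mathcal{Q}\,\hat g$ vanishes on all of $\{t_1^2=t_0t_2\}$, hence on $C$; equivalently the pushforward of $\rho$ under every map $w\mapsto w_0+w_1a+w_2a^2$ is the zero measure. Since $\rho$ is a compactly supported signed density with $\int\rho=\hat\rho(0)=0$, its Jordan decomposition $\rho=\rho_+-\rho_-$ has $\int\rho_+=\int\rho_-=:m>0$, and $\Theta_1:=\rho_+/m,\ \Theta_2:=\rho_-/m$ are distinct probability measures supported in the interior of $\Delta$, hence concentrated in $F_{\{0,1,2,3\}}\subset P^+$, with identical Laplace–transform marginals. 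Transporting back through the substitution and invoking Lemma \ref{equiv} yields $\forall n,\ S_n=_dT_n$ while $\Theta_1\ne\Theta_2$, so $P^+$ is not good.

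I expect the main obstacle to be the positivity/simplex constraint. The cleanest linear–algebra version of the counterexample—two centered Gaussians whose covariances agree along the moment curve $(1,a,a^2)$—violates $P_i\ge0$, so the genuine work is producing a nonzero solution of the Fourier–vanishing condition whose support sits inside $\Delta$. The device of applying the quadric's symbol $P$ to a bump function resolves exactly this, keeping compact support inside the interior of the simplex. It is also consistent with Proposition \ref{ptmass} that the resulting $\Theta_i$ come out non–atomic, since any counterexample must avoid countable support.
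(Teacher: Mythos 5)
Your proposal is correct, and it reaches the counterexample by a genuinely different construction than the paper. The paper proves this via Theorem \ref{0123}: it invokes the sharp Cram\'er--Wold result (Lemma \ref{cramer}, whose proof multiplies by $p^2f^K$ on the Fourier side and needs Paley--Wiener to regain compact support), obtains bounded $W\neq_d Z$ matching along the moment curve $(1,y,y^2)$, affinely squeezes them into $H$, and then telescopes --- multiplying by $(y-1)$ --- to manufacture $T_4$-valued $U,V$ matching along $(1,y,y^2,y^3)$ (Proposition \ref{example maker}), before passing through Lemma \ref{equiv} with $y=e^{-s}$. You run the same first reduction through Lemma \ref{equiv} but then handle the simplex constraint \emph{first}: eliminating $P_0$, dividing by the scalar $(x-1)$ (the exact mirror image of the paper's multiplication by $(y-1)$), and applying an invertible substitution to land on the quadric relation $t_1^2=t_0t_2$ in $\mathbb{R}^3$; you then build the signed measure directly in physical space as $\rho=(\partial_{w_1}^2-\partial_{w_0}\partial_{w_2})g$ for a bump $g$ inside the (full-dimensional) image of the simplex, so that $\hat\rho=-\mathcal{Q}\,\hat g$ vanishes on the quadric by construction and support control is automatic. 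This is dual to the paper's construction --- applying the variety's symbol as a differential operator rather than multiplying by the polynomial on the Fourier side --- and it buys you a more self-contained argument: no Paley--Wiener, no evenness bookkeeping (reality of $\rho$ is automatic), no rescaling into $H$, and no telescoping, with the positivity/simplex issue resolved trivially by support of $g$. Your verification points all check: $\int\rho=\hat\rho(0)=0$ gives equal masses of the Jordan parts; a product bump does give $Pg=g_0g_1''g_2-g_0'g_1g_2'\not\equiv0$ (evaluate at a critical point of $g_0$ and note a nonzero compactly supported $g_1$ cannot have $g_1''\equiv0$); and the pushforward of $\rho$ under $w\mapsto w\cdot(1,a,a^2)$ has Fourier transform $c\mapsto\hat\rho(c(1,a,a^2))=0$. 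What the paper's longer route buys instead is the reusable general Lemma \ref{cramer} (any set inside a projective variety fails to be determining), which drives the later constructions in Propositions \ref{g4} and \ref{general} and in Section 4, whereas your argument is tailored to this one quadric. The only step you elide is the paper's standing technicality that the abstractly constructed $\alpha_1,\alpha_2$ must be realized, without loss of generality, as directing measures of actual exchangeable sequences --- the paper states this once after Theorem \ref{0123} and thereafter suppresses it, so a one-line remark suffices.
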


We will even show that

\begin{thm}
$F_{\{0, 1, 2, 3\}}$ is not good. \label{0123}
\end{thm}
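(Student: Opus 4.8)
The plan is to refute goodness by producing, via Lemma \ref{equiv}, two distinct laws $\Theta_1 \neq \Theta_2$ on $F_{\{0,1,2,3\}}$ whose directing measures satisfy $\mathcal{L}_{\alpha_1}(s) =_d \mathcal{L}_{\alpha_2}(s)$ for every $s \ge 0$. Writing a probability vector on $\{0,1,2,3\}$ as $p = (p_0,p_1,p_2,p_3)$ and substituting $z = e^{-s} \in (0,1]$, the Laplace transform becomes the random cubic $P_p(z) = p_0 + p_1 z + p_2 z^2 + p_3 z^3 \in [0,1]$. For each fixed $z$ the variable $P_p(z)$ is bounded, so matching its law under $\Theta_1$ and $\Theta_2$ is equivalent to matching all its moments; since $z \mapsto \int P_p(z)^n \, d\Theta_i(p)$ is a polynomial in $z$, equality on $(0,1]$ is equivalent to equality of these polynomials. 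Hence, setting $\nu := \Theta_1 - \Theta_2$, the entire requirement collapses to
\[ \textstyle\int P_p(z)^n \, d\nu(p) = 0 \quad \text{for all } n \ge 0 \text{ and all } z, \]
while we must keep $\nu \neq 0$ so that $\Theta_1 \neq \Theta_2$.

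Next I would pass to the coordinates $u = (p_1,p_2,p_3)$, so that $\Delta_3$ becomes the solid simplex $\mathcal{R} = \{u : u_i \ge 0,\ u_1+u_2+u_3 \le 1\}$ and, using $p_0 = 1 - p_1 - p_2 - p_3$, one has $P_p(z) = 1 + \langle u, v(z)\rangle$ with $v(z) = (z-1,\, z^2-1,\, z^3-1)$. Because $\nu$ will have total mass $\hat{\nu}(0) = 0$, the displayed condition is equivalent to demanding that the one-dimensional projection of $\nu$ in each direction $v(z)$ vanish identically. By Cram\'er--Wold a signed measure is determined by all of its directional projections, but here the admissible directions $\{v(z)\}$ trace out only a curve; this deficiency is exactly what I will exploit.

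The key computation is that this curve of directions lies on a fixed quadric cone:
\[ Q(\xi) := \xi_1^2 - \xi_1\xi_2 + \xi_2^2 - \xi_1\xi_3, \qquad Q(v(z)) \equiv 0, \]
a one-line verification that is the precise manifestation of the arithmetic coincidence $0 + 3 = 1 + 2$ in the value set. I would then take a bump $\psi \in C_c^\infty$ supported in a small ball in the interior of $\mathcal{R}$ and set $\nu := L\psi$, where $L$ is the constant-coefficient differential operator whose Fourier symbol is $Q$, so that $\hat{\nu}(\xi) = Q(\xi)\hat{\psi}(\xi)$. Then $\hat{\nu}$ vanishes on $\{Q=0\}$, whence $\hat{\nu}(t\,v(z)) = t^2 Q(v(z))\, \hat{\psi}(t v(z)) = 0$, so every required projection is the zero measure; moreover $\hat{\nu}(0) = 0$ gives total mass zero, and Paley--Wiener (an entire $\hat{\psi}$ cannot vanish off $\{Q=0\}$ unless $\psi \equiv 0$) guarantees $\nu \neq 0$.

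Finally I would restore positivity: let $\Theta_2 = \mu_0$ be the uniform probability measure on a ball containing $\mathrm{supp}\,\psi$ inside the interior of $\mathcal{R}$, and let $\Theta_1 = \mu_0 + \delta\nu$ for $\delta > 0$ small enough that the bounded density $\delta\nu$ does not exceed the positive density of $\mu_0$ on its support. Both are then probability measures concentrated on $F_{\{0,1,2,3\}}$ with $\Theta_1 \neq \Theta_2$, and by construction they produce identical Laplace-transform marginals, so $F_{\{0,1,2,3\}}$ is not good. The main obstacle is the tension between the two demands on $\nu$: it must annihilate the observable at every order $n$ and every $z$, yet still arise as the difference of genuine probability measures on the simplex. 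The operator-with-symbol-$Q$ construction meets the first demand precisely because the direction curve lies on $\{Q=0\}$, while confining $\psi$ to the interior of $\mathcal{R}$ and taking $\delta$ small meets the second; this is also why the construction is necessarily non-atomic, consistent with the fact (Proposition \ref{ptmass}) that countable, hence purely atomic, families are always good.
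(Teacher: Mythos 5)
Your proposal is correct, but it takes a genuinely different route from the paper's. The paper deduces Theorem \ref{0123} from Proposition \ref{example maker}: it invokes the sharp Cram\'er--Wold result (Lemma \ref{cramer}) on the Veronese curve $c_3(y)=(1,y,y^2)$, which lies on the quadric $x_2^2=x_1x_3$, to obtain bounded $W\neq_d Z$ in $\mathbb{R}^3$ with matching projections, squeezes them affinely into $H$, telescopes and multiplies by $(y-1)$ to manufacture $T_4$-valued probability vectors matching along $c_4(y)=(1,y,y^2,y^3)$, and finally substitutes $y=e^{-s}$ and applies Lemma \ref{equiv}. You perform the same reduction via Lemma \ref{equiv} and $z=e^{-s}$, but then eliminate $p_0$ to get the direction curve $v(z)=(z-1,z^2-1,z^3-1)$, exhibit an explicit quadric cone $Q(\xi)=\xi_1^2-\xi_1\xi_2+\xi_2^2-\xi_1\xi_3$ with $Q(v(z))\equiv0$ (this checks out: all coefficients of the resulting quartic in $z$ cancel), and build the signed perturbation directly as $\nu=L\psi$ with $\widehat{\nu}=Q\widehat{\psi}$, which kills every projection along $v(z)$ at once; positivity is then restored not by telescoping into the simplex but by adding $\delta\nu$ to a uniform reference measure supported in the interior of the simplex. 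In effect you have inlined and streamlined the Fourier mechanism hidden inside the paper's own proof of Lemma \ref{cramer} (there realized by the $p^2f^K$ Paley--Wiener gadget, the decomposition $h^{\pm}$, and renormalization): your differential-operator version avoids the auxiliary function $f$, the renormalization, and the telescoping step, and yields fully explicit smooth densities --- necessarily non-atomic, as you rightly observe is forced by Proposition \ref{ptmass}. What the paper's route buys in exchange is reusability: Proposition \ref{example maker} is cited again for Proposition \ref{g4}, in Section 3, and in Section 4, whereas your construction is bespoke to $F_{\{0, 1, 2, 3\}}$ (though the method clearly generalizes whenever the direction curve lies on a projective variety). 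Two cosmetic repairs: your appeal to ``Paley--Wiener'' for $\nu\neq0$ is really just the observation that $Q\widehat{\psi}\equiv0$ with $Q\not\equiv0$ forces $\widehat{\psi}\equiv0$ by analyticity of $\widehat{\psi}$; and one should add the standard remark (made explicitly in the paper's proof) that any law on $P(\mathbb{R})$ arises as the law of a directing measure, so that Lemma \ref{equiv} indeed applies to your $\Theta_1,\Theta_2$.
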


In order to prove this theorem, we must develop the following.

First we need some definitions

\begin{defn}
Fix $d>0$.  A subset $S$ of $\mathbb{R}^d$ is said to be \textbf{determining} if whenever $U$ and $V$ are bounded $\mathbb{R}^d$-valued random varaibles such that $\forall s \in S,\, s \cdot U=_d \cdot V$ implies that $U=_d V$.
\end{defn}

As mentioned multiple times in the introduction, algebraic properties of certain restrictions will be important.  Thus it is no surprise that we need to consider the $0$ set of polynomials.

\begin{defn}
A subset $S$ of $\mathbb{R}^d$ is called a \textbf{projective variety} if there exists a degree-homogeneous polynomial $p$ defined on $\mathbb{R}^d$ such that $p\neq0$ and $p^{-1}(0)=S$.
\end{defn}

The next result, based on the work of Cuesta-Albertos, Fraiman, and Ransford in \cite{sharp} (Theorems 3.1 and 3.5), and the subsequent proposition will be used throughout this article.

\begin{lem}
Fix $d\ge1$.  A subset $S$ of $\mathbb{R}^d$ is determining if and only if it is not contained in any projective variety. \label{cramer}
\end{lem}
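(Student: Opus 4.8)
The plan is to translate the determining property into a statement about moments via the bounded moment problem (Lemma \ref{arbmoment}), and then to recognize the obstruction to recovery as exactly a homogeneous polynomial relation. Since $U,V$ are bounded, so is each projection $s\cdot U$, and for fixed $s$ the equality $s\cdot U=_d s\cdot V$ is equivalent to $\mathbb{E}[(s\cdot U)^n]=\mathbb{E}[(s\cdot V)^n]$ for all $n$. Expanding by the multinomial theorem, I set
\[
Q_n(s):=\mathbb{E}[(s\cdot U)^n]-\mathbb{E}[(s\cdot V)^n]=\sum_{|\beta|=n}\binom{n}{\beta}\big(\mathbb{E}[U^\beta]-\mathbb{E}[V^\beta]\big)\,s^\beta,
\]
a form homogeneous of degree $n$ in $s\in\mathbb{R}^d$. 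The hypothesis $s\cdot U=_d s\cdot V$ for all $s\in S$ says precisely that every $Q_n$ vanishes on $S$, while the desired conclusion $U=_d V$ is, by Lemma \ref{arbmoment}, equivalent to $Q_n\equiv 0$ for every $n$.

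\emph{Forward direction.} Suppose $S$ is contained in no projective variety. If $s\cdot U=_d s\cdot V$ on $S$, each $Q_n$ vanishes on $S$. Were some $Q_n\not\equiv 0$, then $Q_n$ would be a nonzero homogeneous polynomial with $S\subseteq Q_n^{-1}(0)$, exhibiting $S$ inside a projective variety, contrary to hypothesis. Hence all $Q_n\equiv 0$, so all joint moments of $U$ and $V$ agree and $U=_d V$; thus $S$ is determining.

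\emph{Converse.} Suppose $S\subseteq Z(p):=p^{-1}(0)$ with $p\neq 0$ homogeneous of degree $m\ge 1$. I will manufacture distinct bounded $U,V$ whose projections agree on the whole cone $Z(p)$. Fix a nonzero smooth compactly supported density $\lambda$ and set $\rho:=p(\partial)\lambda$, the constant-coefficient differential operator $p(\partial)$ applied to $\lambda$; then $\rho$ is a compactly supported signed measure with smooth density, and its (two-sided) Laplace transform is $\mathcal{L}_\rho(\xi)=\pm\,p(\xi)\,\mathcal{L}_\lambda(\xi)$. Because $p$ is homogeneous, $p(ts)=t^m p(s)=0$ for every $s\in Z(p)$ and all $t\in\mathbb{R}$, so $t\mapsto\mathcal{L}_\rho(ts)$ vanishes identically for $s\in Z(p)$; differentiating in $t$ at $0$ gives $\int(s\cdot x)^n\,d\rho(x)=0$ for every $n\ge0$ and every $s\in Z(p)$. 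Taking $n=0$ with $s\neq 0$ yields $\rho(\mathbb{R}^d)=0$, and $\rho\neq0$ since $p\,\mathcal{L}_\lambda\not\equiv0$. Jordan-decomposing $\rho=\rho_+-\rho_-$, the total-mass condition forces $\rho_+(\mathbb{R}^d)=\rho_-(\mathbb{R}^d)=c>0$; let $U,V$ have laws $\rho_+/c,\rho_-/c$. These are distinct, compactly supported (hence bounded), and for $s\in Z(p)\supseteq S$ every projection moment of $U$ matches that of $V$, so $s\cdot U=_d s\cdot V$ while $U\neq_d V$. Thus $S$ is not determining.

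The routine half is the forward direction, which is immediate once the $Q_n$ are identified. The substantive step — and the content borrowed from \cite{sharp} — is the converse construction: the main obstacle is producing genuine probability measures rather than a mere signed measure or Schwartz distribution, which is resolved by choosing $\lambda$ smooth with compact support so that $\rho=p(\partial)\lambda$ is an honest, compactly supported, smooth signed density of total mass zero, and then normalizing its Jordan components. Homogeneity of $p$ is used crucially to upgrade vanishing of $\mathcal{L}_\rho$ at points of $Z(p)$ to vanishing along entire rays, which is what forces all projection moments, and not merely the Laplace transform, to vanish.
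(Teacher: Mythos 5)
Your proof is correct, and while your forward direction coincides with the paper's (the homogeneous moment-difference polynomials $Q_n$ vanishing on $S$, then Cram\'er--Wold), your converse takes a genuinely different route. The paper works on the Fourier side: it forms $g=p^2f^K$ with $f(z)=\prod_j(\sin z_j-z_j)/z_j^3$, invokes Plancherel and Paley--Wiener to see that $h=\hat g$ is a real, compactly supported function, sets $\mu=h^+dx$, $\nu=h^-dx$, and matches characteristic functions along rays through $S$ via the identity $\hat\mu-\hat\nu=c\,p^2f^K$. You instead work on the physical side: $\rho=p(\partial)\lambda$ is automatically a smooth compactly supported signed density (differentiation does not enlarge support), integration by parts gives $\mathcal{L}_\rho=(-1)^m p\,\mathcal{L}_\lambda$, and homogeneity kills this transform along every ray through $Z(p)$, whence all projection moments vanish and the bounded moment problem (Lemma \ref{arbmoment} with $l=1$) yields $s\cdot U=_d s\cdot V$. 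Your route buys a lot: it avoids the auxiliary function $f$, the exponent $K$, Plancherel, and Paley--Wiener entirely, gets compact support for free, and needs only $p$ rather than $p^2$ (the paper squares $p$ to make $g$ even so that $h$ is real; your $\rho$ is real by construction), at the cost of one extra appeal to moment determinacy at the end, which the paper's characteristic-function matching does not need. Two cosmetic repairs: your justification of $\rho(\mathbb{R}^d)=0$ via ``$n=0$ with $s\neq0$'' fails when $Z(p)=\{0\}$ (e.g.\ $p(x)=\|x\|^2$), but the conclusion survives since $p(0)=0$ for $m\ge1$, so $\mathcal{L}_\rho(0)=0$ directly; and you should dispose of the degenerate cases as the paper does --- if $S=\emptyset$, or if the given variety comes from a nonzero constant, note that $S\subseteq Z(x_1)$, so your construction still applies with a polynomial of positive degree.
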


We will borrow the ideas found in \cite{sharp} in order to prove this lemma.

\begin{proof}
Fix $d\ge1$ and $S\subset \mathbb{R}^d$.  We assume that $S$ is nonempty because the claim is true if $S$ is empty.

Suppose $S$ is not contained in any projective variety, and consider two bounded random variables $U, V$.  For each $n\ge 0$, we define the polynomial 
\begin{equation}p(x)=\mathbb{E}[(x\cdot U)^n]-\mathbb{E}[(x\cdot V)^n]\end{equation} 
which is homogeneous of degree $n$. Since $p$  vanishes on $S$, it follows that $p$ must be the $0$ polynomial.  It follows that $\forall x \in \mathbb{R}^d$ we have that all the moments of the real-valued, bounded random variable $x\cdot U$ are equal to those of $x \cdot V$. Hence $\forall x\in \mathbb{R}^d$ we have $x\cdot U=_dx\cdot V$, thus $U=_dV$.

For the converse, it suffices to assume that $S$ is a projective variety and construct two different probability measures $\mu$ and $\nu$ with bounded support, defined on $\mathbb{R}^d$, such that $\forall s \in S,\, s \cdot \mu=s \cdot \nu$.  Here, if $U$ is $\mu$ distributed then $s\cdot \mu$ means the distribution probability measure of $s\cdot U$ and similarly for $\nu$.

Define an auxiliary function $f:\mathbb{C}^d\rightarrow \mathbb{C}$ given by
\begin{equation}f(z):=\prod_{j=1}^{d}\frac{\sin z_j-z_j}{z_j^3}.\end{equation}
Here, we have $z=(z_1, \dots, z_d)$.

It is routine to verify that 

(i)$f$ is even, entire, and real valued when restricted to $\mathbb{R}^d$

(ii)$f$ is of exponential type, i.e. there is a $C>0$ such that $\forall z$ we have $|f(z)|\le C e^{\sum_{j=1}^d |z_j|}$ 

(iii)There is a $C>0$ such that $\forall x \in \mathbb{R}^d$ we have $|f(x)|\le C/(1+||x||^2)$. 
 
(iv) we have $f(0)\neq0$.

Now, find $p$ a homogeneous polynomial on $\mathbb{R}^d$ that is not a constant such that $S\subset p^{-1}(0)$.  The reason $p$ can be chosen to be not a constant is that $S$ is nonempty. Define $g:\mathbb{R}^d \rightarrow \mathbb{R}$ by $g(x)=p(x)^2f(x)^K$, where $K$ is chosen large enough so that $g\in L^2(\mathbb{R}^d)$.  This is possible by (iii).  Let $h=\hat{g}$ be the Fourier transform of $g$.  By Plancherel's theorem we have that $h\in L^2(\mathbb{R}^d)$ and $h$ is real-valued since $g$ is even and real-valued.  Moreover, the Paley-Wiener theorem tells us that $h$ is supported in a compact subset of $\mathbb{R}^d$.  Here, we are applying Paley-Wiener to $g$ which is exponential type, analytic because $f$ is exponential type, analytic. (We extend the definition of $p$ to $\mathbb{C}^d$.)  But, depending on the convention of the Fourier transform, we already know the inverse Fourier transform of $g$ in terms of $h$.  This shows that $h$ is $0$ off of a compact set.

Define finite, positive Borel measures with compact support by 
\begin{equation}\mu=h^+dx,\, \nu=h^-dx,\end{equation}
which are mutually singular and therefore not equal.  Using the Fourier inversion theorem, there is a constant $c\neq0$ depending on the conventions of the Fourier transform and its inverse, such that

\begin{equation}\hat{\mu}-\hat{\nu}=cg=cp^2f^K \text{ on } \mathbb{R}^d.\end{equation}
Evaluating this equality at $0 \in \mathbb{R}^d$ we find that 

\begin{equation}\mu(\mathbb{R}^d)-\nu(\mathbb{R}^d)=cp(0)^2f(0)^K=0.\end{equation}  
Since $\mu$ and $\nu$ are both nonzero (because $g$ is not zero, $p$ is not zero, so $h$ is not zero) and have the same total mass, we may renormalize if necessary to force them to be probability measures.

We also have that for any $x \in \mathbb{R}^d$ $x\cdot \mu = x\cdot \nu$ if and only if $\forall t \in \mathbb{R}$ we have $\hat{\mu}(tx)-\hat{\nu}(tx)=0$ if and only if $\forall t \in \mathbb{R},\, cp(tx)^2f(tx)^K=0$.  Thus, $\forall x \in S$ we have $p(x)=0$ and thus $x\cdot \mu = x \cdot \nu$.

This completes the proof that $S$ is not determining, because $\mu$ and $\nu$ are distributions of bounded random vectors with values in $\mathbb{R}^d$ for which their projections along vectors in $S$ agree in law, but they are not equidistributed with one another.
\end{proof}

Part of the utility of presenting the above proof is to show how constructions in the sequel that use this lemma can be made explicit.  We have now stated and proven what we need from the existing literature.  We use the above to prove the following proposition, which will drive many constructions in this article.

\begin{prop}
There exist $U\neq_d V$ which are valued in the unit tetrahedron 

\begin{equation}T_4:=\{(x_0, x_1, x_2, x_3)|\forall j \in \{0, 1, 2, 3\},\, x_j\ge0,\, \sum_{j=0}^3x_j=1\}\subset \mathbb{R}^4\end{equation}
such that $\forall y \in \mathbb{R}$ we have $c_4(y)\cdot U=_d c_4(y) \cdot V$.  Here, $c_4(y)=(1, y, y^2, y^3)\in \mathbb{R}^4.$\label{example maker}
\end{prop}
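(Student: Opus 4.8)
The plan is to reduce the four-dimensional statement to a three-dimensional instance of Lemma~\ref{cramer}, exploiting that $T_4$ lies in the affine hyperplane $\{\sum_j x_j = 1\}$. Write a $T_4$-valued vector as $U=(U_0,U_1,U_2,U_3)$ with $U_0 = 1 - U_1 - U_2 - U_3$, and set $\tilde U = (U_1,U_2,U_3)$, which ranges in the solid simplex $\Delta := \{(u_1,u_2,u_3): u_j\ge 0,\ u_1+u_2+u_3\le 1\}$; conversely any $\Delta$-valued $\tilde U$ lifts back to a $T_4$-valued $U$. A direct computation gives $c_4(y)\cdot U = 1 + \tilde c(y)\cdot \tilde U$, where $\tilde c(y) := (y-1,\,y^2-1,\,y^3-1)\in\mathbb{R}^3$. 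Since the two sides differ by the deterministic constant $1$, the desired conclusion $c_4(y)\cdot U =_d c_4(y)\cdot V$ for all $y$ is equivalent to $\tilde c(y)\cdot \tilde U =_d \tilde c(y)\cdot \tilde V$ for all $y$, and $U\neq_d V$ is equivalent to $\tilde U \neq_d \tilde V$. So it suffices to produce $\Delta$-valued $\tilde U \neq_d \tilde V$ whose projections along $\tilde S := \{\tilde c(y): y\in\mathbb{R}\}$ agree in law. I would emphasize that one cannot simply apply Lemma~\ref{cramer} directly in $\mathbb{R}^4$ to the obvious curve $\{(1,y,y^2,y^3)\}$ (which lies on the quadric $x_1^2=x_0x_2$), because the measures produced there are absolutely continuous and hence cannot be concentrated on the hyperplane $\{\sum_j x_j=1\}$; passing to the hyperplane first is what forces the reduction.

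The heart of the argument, and the step I expect to be the main obstacle, is to show that $\tilde S$ is contained in a projective variety, so that Lemma~\ref{cramer} supplies the sought counterexample. I would exhibit the explicit nonzero homogeneous quadratic $p(w_1,w_2,w_3) = w_1 w_3 + w_1 w_2 - w_1^2 - w_2^2$ and verify that $p(\tilde c(y)) = 0$ for every $y$: after factoring out the common $(y-1)^2$, the bracketed remainder collapses to $0$. Conceptually, the reason such a $p$ must exist is that dividing the coordinates of $\tilde c(y)$ by their common factor $y-1$ yields the point $[\,1 : y+1 : y^2+y+1\,]$ in $\mathbb{RP}^2$, a conic (the degree-two image of a line), which necessarily lies on a quadric cone; translating this cone back to the $w$-coordinates produces $p$.

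With $\tilde S \subset p^{-1}(0)$ established, Lemma~\ref{cramer} shows $\tilde S$ is not determining, so there exist bounded $\tilde U', \tilde V'$ in $\mathbb{R}^3$ with $\tilde U' \neq_d \tilde V'$ and $\tilde c(y)\cdot \tilde U' =_d \tilde c(y)\cdot \tilde V'$ for all $y$. These need not be $\Delta$-valued, so the final step is to apply a single affine map $\Phi(x) = \varepsilon x + b$ (with $\varepsilon>0$ small and $b$ suitably chosen) to both, pushing their common bounded support into the nonempty interior of $\Delta$. This map preserves both properties: $\tilde c(y)\cdot \Phi(\tilde U') = \varepsilon\,(\tilde c(y)\cdot \tilde U') + \tilde c(y)\cdot b$ differs from the corresponding $V$-expression only by a fixed positive scaling and a deterministic shift, so equality in law persists, while the injectivity of $\Phi$ preserves $\neq_d$. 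Setting $\tilde U, \tilde V$ to be the images and lifting back via $U_0 = 1 - U_1 - U_2 - U_3$ yields the required $T_4$-valued $U \neq_d V$ with $c_4(y)\cdot U =_d c_4(y)\cdot V$ for all $y\in\mathbb{R}$.
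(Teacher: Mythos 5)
Your proof is correct, and while it rests on the same engine as the paper's --- an application of Lemma \ref{cramer} in $\mathbb{R}^3$ followed by an affine normalization into a simplex --- the reduction is organized genuinely differently. The paper first invokes Lemma \ref{cramer} on the standard moment curve $c_3(y)=(1,y,y^2)$ (implicitly using the classical quadric $x_1^2=x_0x_2$), normalizes $W,Z$ into the order simplex $H=\{1\ge x_1\ge x_2\ge x_3\ge0\}$, builds $U,V$ as successive differences, and only then derives the $c_4$ identity algebraically, by telescoping $c_3(y)\cdot(U_1+U_2+U_3,\,U_2+U_3,\,U_3)$ and multiplying both sides by the constant $(y-1)$. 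You invert this order: you eliminate $U_0$ at the outset via $c_4(y)\cdot U = 1+\tilde c(y)\cdot\tilde U$ with $\tilde c(y)=(y-1,\,y^2-1,\,y^3-1)$, exhibit the explicit quadric $p(w_1,w_2,w_3)=w_1w_3+w_1w_2-w_1^2-w_2^2$ vanishing on the reduced curve (your factorization checks out: $p(\tilde c(y))=(y-1)^2\left[(y^2+y+1)+(y+1)-1-(y+1)^2\right]=0$), apply Lemma \ref{cramer} once to exactly the curve you need, and lift back; your normalization into $\Delta=\{u_j\ge0,\ \sum_j u_j\le1\}$ via $U_0=1-U_1-U_2-U_3$ is in fact the one the paper uses later in the proof of Proposition \ref{general}, not in this proof. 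What each route buys: the paper's version only ever needs the well-known moment-curve quadric, at the price of the telescoping-and-multiply-by-$(y-1)$ trick (which silently sends both sides to the deterministic value at $y=1$); yours is more direct and self-contained, since the variety membership is verified by hand and no algebraic identity between the $c_3$ and $c_4$ statements is needed. Your side remark --- that one cannot apply Lemma \ref{cramer} directly in $\mathbb{R}^4$ to $\{(1,y,y^2,y^3)\}$ because the construction in its proof yields absolutely continuous measures $h^+\,dx$, $h^-\,dx$ that no single invertible affine map can concentrate on the hyperplane $\{\sum_j x_j=1\}$ --- is accurate and correctly motivates why the hyperplane reduction (in either ordering) is the essential preliminary step.
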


\begin{proof}
Let $c_3(y)=(1, y, y^2) \in \mathbb{R}^3$ for all $y\in\mathbb{R}$.  By Lemma \ref{cramer}, there exist $W=(W_1, W_2, W_3)\neq_d Z=(Z_1, Z_2, Z_3)$ bounded $\mathbb{R}^3$-valued RVs such that 

\begin{equation}
\forall y \in \mathbb{R},\, c_3(y)\cdot W=_dc_3(y)\cdot Z. \label{construct}
\end{equation}
Let $H=\{(x, y, z)\in \mathbb{R}^3|1\ge x \ge y \ge z \ge 0\}$.  Observe that $H$ has nonempty interior.  Therefore, given any compact set $C\subset \mathbb{R}^3$, there exist $a \in \mathbb{R}$, $a \neq0$, $b\in \mathbb{R}^3$ such that $aC+b \subset H$.  Because \eqref{construct} is unchanged by rescaling and translation, we may assume that $W, Z$ are $H$-valued.  Define

\begin{align*}
U_0&=1-W_1 &V_0&=1-Z_1\\
U_1&=W_1-W_2 &V_1&=Z_1-Z_2\\
U_2&=W_2-W_3 &V_2&=Z_2-Z_3\\
U_3&=W_3-0 &V_3&=Z_3-0\\
U&=(U_0, U_1, U_2, U_3), &V&=(V_0, V_1, V_2, V_3).\numberthis
\end{align*}

Observe that $U$ and $V$ are $T_4$-valued.  From linear algebra (invertibility of the linear transform linking $(U_1, U_2, U_3)$ to $W$ and $(V_1, V_2, V_3)$ to $Z$) it follows that $(U_1, U_2, U_3)\neq_d(V_1, V_2, V_3)$ so that $U\neq_d V$.  By \eqref{construct} we have

\begin{equation}\forall y \in \mathbb{R},\, c_3(y)\cdot (U_1+U_2+U_3, U_2+U_3, U_3)=_dc_3(y)\cdot (V_1+V_2+V_3, V_2+V_3, V_3).\end{equation}
Plugging in the explicit form of $c_3(y)$ yields

\begin{equation}\forall y \in \mathbb{R},\, U_1+(1+y)U_2+(1+y+y^2)U_3=_dV_1+(1+y)V_2+(1+y+y^2)V_3.\end{equation}
Multiplying by $(y-1)$, we obtain

\begin{multline*}\forall y \in \mathbb{R},\, (y-1)U_1+(y^2-1)U_2+(y^3-1)U_3=_d\\ (y-1)V_1+(y^2-1)V_2+(y^3-1)V_3.\numberthis\end{multline*}
Thus, by definition of $U_0, V_0$ we learn that

\begin{equation}\forall y \in \mathbb{R},\, c_4(y)\cdot U=_dc_4(y)\cdot V.\end{equation}
We have shown that $U, V$ have the required properties.
\end{proof}

We are now ready to establish Theorem \ref{0123}.

\begin{proof}[Proof of Theorem \ref{0123}]
Obtain $U, V$ from Proposition \ref{example maker}.  Define $\alpha_1$ and $\alpha_2$ so that 

\begin{equation}U=_d\bigg(\alpha_1(\{0\}), \alpha_1(\{1\}), \alpha_1(\{2\}), \alpha_1(\{3\})\bigg)\end{equation}
and

\begin{equation}V=_d\bigg(\alpha_2(\{0\}), \alpha_2(\{1\}), \alpha_2(\{2\}), \alpha_2(\{3\})\bigg).\end{equation}
Strictly speaking, $\alpha_1, \alpha_2$ may not be directing measures for some exchangeable sequence, but the distribution of $\alpha_1, \alpha_2$ are still elements of $P(P(\mathbb{R}))$ and are thus mixing measures, from which we may extract directing measures that have the same distribution as $\alpha_1, \alpha_2$.  Thus, we may assume without loss of generality that $\alpha_1, \alpha_2$ are already defined on an appropriate probability space so that they are directing measures.  In the future, this argument will not be explicitly stated.

By a change of variables in $c_4(y)$ from Proposition \ref{example maker} we have $\forall s \ge0,$

\begin{align*}\mathcal{L}_{\alpha_1}(s)&=\\(e^{-0s}, e^{-1s}, e^{-2s}, e^{-3s})\cdot \bigg(\alpha_1(\{0\}), \alpha_1(\{1\}), \alpha_1(\{2\}), \alpha_1(\{3\})\bigg)&=_d\\ (e^{-0s}, e^{-1s}, e^{-2s}, e^{-3s})\cdot \bigg(\alpha_2(\{0\}), \alpha_2(\{1\}), \alpha_2(\{2\}), \alpha_2(\{3\})\bigg)&=\\\mathcal{L}_{\alpha_2}(s)\numberthis\end{align*}

However, we have that the corresponding $\Theta_1$ and $\Theta_2$ are distinct because $U$ and $V$ have different distributions, hence as do $\alpha_1$ and $\alpha_2$.  Thus, $\Theta_1, \Theta_2$ have the required properties when we use Lemma \ref{equiv} to prove that $F_{\{0, 1, 2, 3\}}$ is not good.
\end{proof}

\subsection{A Generalization of the Four-Value Case}

We may regard the values $0, 1, 2, 3$ as independent sums of $0$ copies of $1$, $1$ copy of $1$, $2$ copies of $1$ and $3$ copies of $1$ respectively.  We may replace the constant random variable $1$ with any nonnegative distribution to obtain a generalization of Theorem \ref{0123}.

\begin{prop}

Let $\mu \in P^+$ be nondegenerate and let $*$ denote convolution.  Let $L$ be the collection of convex combinations of $\delta_0, \mu, \mu*\mu, \mu*\mu*\mu$.  Then $L$ is not good.  The same is true of the convex combinations of $\delta_0, \mu, \mu*\mu, \mu*\mu*\mu, \dots$.
\label{g4}
\end{prop}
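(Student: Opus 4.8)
The plan is to mirror the structure of the proof of Theorem \ref{0123}, replacing the four point masses $\delta_0, \delta_1, \delta_2, \delta_3$ (which are exactly $\delta_0, \delta_1^{*1}, \delta_1^{*2}, \delta_1^{*3}$, i.e.\ convolution powers of $\delta_1$) with the convolution powers $\delta_0 = \mu^{*0}, \mu, \mu*\mu, \mu*\mu*\mu$. The key observation is that the Laplace transform linearizes convolution: if $\mathcal{L}_\mu(s) = \int_{[0,\infty)} e^{-sx}\,d\mu(x)$, then $\mathcal{L}_{\mu^{*k}}(s) = \mathcal{L}_\mu(s)^k$. So setting $y = \mathcal{L}_\mu(s)$, the random Laplace transform of a convex combination $\alpha = a_0\delta_0 + a_1\mu + a_2\mu^{*2} + a_3\mu^{*3}$ is exactly $a_0 + a_1 y + a_2 y^2 + a_3 y^3 = c_4(y)\cdot(a_0,a_1,a_2,a_3)$, the same polynomial structure exploited in Proposition \ref{example maker}.

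First I would take $U \neq_d V$ from Proposition \ref{example maker}, both $T_4$-valued, satisfying $c_4(y)\cdot U =_d c_4(y)\cdot V$ for all $y \in \mathbb{R}$. I would then define random measures $\alpha_1, \alpha_2$ concentrated on $L$ by
\begin{equation}
\alpha_1 = U_0\delta_0 + U_1\mu + U_2(\mu*\mu) + U_3(\mu*\mu*\mu),
\end{equation}
and analogously $\alpha_2$ using $V$. Since $U, V$ are $T_4$-valued, their coordinates are nonnegative and sum to $1$, so $\alpha_1, \alpha_2$ are genuine random probability measures in $L$; and since $U \neq_d V$, the distinctness of the directing measures (hence of $\Theta_1, \Theta_2$) follows exactly as in the proof of Theorem \ref{0123}, using that the map $(a_0,a_1,a_2,a_3) \mapsto a_0\delta_0 + a_1\mu + a_2(\mu*\mu) + a_3(\mu*\mu*\mu)$ is injective when $\mu$ is nondegenerate. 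As before, I would remark that $\alpha_1, \alpha_2$ can be realized as honest directing measures on a suitable probability space. The computation of the random Laplace transform gives, for every $s\ge 0$,
\begin{equation}
\mathcal{L}_{\alpha_1}(s) = c_4(\mathcal{L}_\mu(s))\cdot U =_d c_4(\mathcal{L}_\mu(s))\cdot V = \mathcal{L}_{\alpha_2}(s),
\end{equation}
where the middle equality is the defining property of $U,V$ applied at the real number $y = \mathcal{L}_\mu(s)$. Lemma \ref{equiv} then certifies that $L$ is not good.

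The one point requiring care — and the main obstacle — is the injectivity of the representation map, i.e.\ that $\delta_0, \mu, \mu*\mu, \mu*\mu*\mu$ are affinely independent in $P(\mathbb{R})$, so that distinct coefficient vectors give distinct measures and $U\neq_d V$ genuinely forces $\Theta_1 \neq \Theta_2$. This is where nondegeneracy of $\mu$ is used: if $\mu$ were a point mass $\delta_c$ with $c\neq 0$ the four convolution powers would still be distinct, but I must rule out affine relations among the powers in general. The cleanest way is again via the Laplace transform: an affine dependence $\sum_k b_k \mu^{*k} = 0$ with $\sum_k b_k = 0$ would give $\sum_k b_k \mathcal{L}_\mu(s)^k = 0$ for all $s$, i.e.\ the polynomial $\sum_k b_k y^k$ vanishes on the range of $s\mapsto \mathcal{L}_\mu(s)$; since $\mu$ is nondegenerate this range is a nondegenerate interval, forcing all $b_k = 0$. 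For the final sentence of the proposition (the analogous statement using all convolution powers $\delta_0, \mu, \mu*\mu, \dots$), I simply note that the four-power set is a subset of the full collection, and subsets of good sets are good, so the full collection cannot be good either.
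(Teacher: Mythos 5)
Your proposal is correct and follows essentially the same route as the paper: the same $U\neq_d V$ from Proposition \ref{example maker}, the same identification $\mathcal{L}_{\mu^{*k}}=\mathcal{L}_\mu^k$ putting the random Laplace transforms in the image of $c_4$, and the same appeal to Lemma \ref{equiv}, with the infinite-power case handled by the subsets-of-good-sets-are-good remark. The only (minor, equally valid) divergence is your injectivity argument, where you note the polynomial $\sum_k b_k y^k$ vanishes on the nondegenerate interval $\mathcal{L}_\mu([0,\infty))$, whereas the paper iterates a multiply-by-$\mathcal{L}_\mu$, differentiate, divide-by-$\frac{d}{ds}\mathcal{L}_\mu$ operation on the transform identity.
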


The primary purpose of this subsection is to prepare for comparisons and analogies with material from subsection \ref{poiss}, for which the case in which $\mu$ is Poisson is important.  The techniques themselves are not logical prerequisites for material in the sequel.

\begin{proof}

To imitate the last proof, we need a homeomorphism between $T_4$ and $L$.  We propose the map assigning a vector $(a, b, c, d)\in T_4$ the element $a\delta_0+b\mu+c\mu*\mu+d\mu*\mu*\mu$ of $L$.  This map is surjective. To see it is injective, if there are $(a, b, c, d), (a', b', c', d')$ with $a\delta_0+b\mu+c\mu*\mu+d\mu*\mu*\mu=a'\delta_0+b'\mu+c'\mu*\mu+d'\mu*\mu*\mu$ then taking Laplace transforms we have that $\forall s \ge 0,$ 

\begin{multline*}a\mathcal{L}_\mu(s)^0+b\mathcal{L}_\mu(s)^1+c\mathcal{L}_\mu(s)^2+d\mathcal{L}_\mu(s)^3=\\ a'\mathcal{L}_\mu(s)^0+b'\mathcal{L}_\mu(s)^1+c'\mathcal{L}_\mu(s)^2+d'\mathcal{L}_\mu(s)^3.\numberthis\label{iterate}\end{multline*}

Consider the operation of multiplying by $\mathcal{L}_\mu(s)$, then taking the derivative in $s$, and then dividing by $\frac{d}{ds}\mathcal{L}_\mu(s)$.  This division is legitimate because $\frac{d}{ds}\mathcal{L}_\mu<0$ for nondegenerate $\mu$.  Iterating this process on \eqref{iterate} as many times as we wish shows that $(a, b, c, d)=(a', b', c', d')$.

Bijective continuous maps between compact Hausdorff spaces are homeomorphisms.

Now, obtain $U, V$ as in Proposition \ref{example maker}.  Define 

\begin{equation}\alpha_1=U_0\delta_0+U_1\mu+U_2\mu*\mu+U_3\mu*\mu*\mu\end{equation}
and 
\begin{equation}\alpha_2=V_0\delta_0+V_1\mu+V_2\mu*\mu+V_3\mu*\mu*\mu.\end{equation}
We assume without loss of generality that $\alpha_1, \alpha_2$ are directing measures.
This way, $\Theta_1, \Theta_2$ are the pushforwards of $U, V$ via the above homeomorphism. Since $\Theta_1, \Theta_2$ are the distributions of $\alpha_1, \alpha_2$ by Remark \ref{corresp} we have that $\alpha_1\neq_d\alpha_2$ and $\Theta_1\neq\Theta_2$.  However, the random Laplace transform of $\alpha_1$ is

\begin{equation}\mathcal{L}_{\alpha_1}=\bigg(\mathcal{L}_\mu^0, \mathcal{L}_\mu^1, \mathcal{L}_\mu^2, \mathcal{L}_\mu^3\bigg)\cdot U\end{equation}
and the Laplace transform of $\alpha_2$ is
\begin{equation}\mathcal{L}_{\alpha_2}=\bigg(\mathcal{L}_\mu^0, \mathcal{L}_\mu^1, \mathcal{L}_\mu^2, \mathcal{L}_\mu^3\bigg)\cdot V.\end{equation}
Vectors of the form $\bigg(\mathcal{L}_\mu^0, \mathcal{L}_\mu^1, \mathcal{L}_\mu^2, \mathcal{L}_\mu^3\bigg)(s)$ are a subset of the image of $c_4$.  Thus, $\forall s$, $\alpha_1$ and $\alpha_2$ have random Laplace transforms evaluated at $s$ that have the same distribution.  Thus, by Lemma \ref{equiv} it follows that $L$ is not good.
\end{proof}

\subsection{Another Generalization of the Four-Value Case}

The primary purpose of this subsection is to prepare for comparisons and analogies with material from Subsection \ref{poiss}, and to expose some new techniques that are useful in proving results showing the relationship between arithmetic and algebraic dependences versus uniqueness results. The techniques themselves are not logical prerequisites for material in the sequel.

We may instead replace the role of independent sums by regular sums of the random variable with itself.  So we regard $0, 1, 2, 3$ as the sum of $0, 1, 2, 3$ copies of $1$.  If instead of $1$, we use an arbitrary nonnegative random variable, we arrive at the following generalization.

\begin{prop}
Let $\mu \in P^+$ be such that $\mathcal{L}_\mu(s)$ is a rational function in $s$, and let $T\neq0$ be a random variable with distribution $\mu$.  Fix $N\ge3$.  Let $L$ be the convex combinations of the distributions of $0T, 1T, 2T,\dots, NT$.  i.e. $L\subset P^+$ is the set of probability measures that have Laplace transform of the form

\begin{equation}\sum_{j=0}^{N}b_j\mathcal{L}_\mu(js).\end{equation}
\label{general}
Then $L$ is not good.
\end{prop}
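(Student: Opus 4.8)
The plan is to follow the template of Propositions \ref{example maker} and \ref{g4}: first identify $L$ measurably with a simplex, then use Lemma \ref{equiv} to recast goodness as a determinacy question about projections of random vectors, and finally invoke the converse half of Lemma \ref{cramer} by exhibiting a projective variety containing the relevant set of test directions. The rationality of $\mathcal{L}_\mu$ will be used in two places, and the hypothesis $N\ge 3$ will be precisely what makes a dimension count succeed.

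First I would set up the homeomorphism. The map $(b_0,\dots,b_N)\mapsto\sum_{j=0}^N b_j(\text{law of }jT)$ from the simplex $T_{N+1}=\{b\ge0,\ \sum_j b_j=1\}$ onto $L$ is a continuous surjection; injectivity follows by taking Laplace transforms and showing the functions $s\mapsto\mathcal{L}_\mu(js)$, $j=0,1,\dots,N$, are linearly independent. Writing $\mathcal{L}_\mu=P/Q$ in lowest terms (nonconstant since $T\neq0$, with $Q(0)\neq0$ and $Q$ having no roots on $[0,\infty)$), the function $\mathcal{L}_\mu(js)$ is rational with poles at $\{w/j:Q(w)=0\}$; letting $w_\ast$ be a root of $Q$ of maximal modulus and noting that $w_\ast/j$ cannot be a pole of any term of larger index, one peels off the coefficients successively and concludes independence. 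Since $T_{N+1}$ and $L$ are compact Hausdorff, this bijection is a homeomorphism, hence a measurable isomorphism, exactly as in Proposition \ref{g4}.

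By Lemma \ref{equiv} and this identification, it suffices to produce simplex-valued $U\neq_d V$ with $\sum_{j=0}^N\mathcal{L}_\mu(js)U_j=_d\sum_{j=0}^N\mathcal{L}_\mu(js)V_j$ for all $s\ge0$. Using $\mathcal{L}_\mu(0)=1$ and $\sum_j U_j=\sum_j V_j=1$, I would subtract the constant term to rewrite this as
\[
\sum_{j=1}^N\bigl(\mathcal{L}_\mu(js)-1\bigr)U_j=_d\sum_{j=1}^N\bigl(\mathcal{L}_\mu(js)-1\bigr)V_j,\qquad s\ge0,
\]
which involves only $U^\ast:=(U_1,\dots,U_N)$ and $V^\ast:=(V_1,\dots,V_N)$ ranging over the full-dimensional set $\Delta=\{x\in\mathbb{R}^N:x_j\ge0,\ \sum_j x_j\le1\}$, with $U\neq_d V$ iff $U^\ast\neq_d V^\ast$. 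Thus the task reduces to showing that $S^\ast:=\{(\mathcal{L}_\mu(js)-1)_{j=1}^N:s\ge0\}\subset\mathbb{R}^N$ is not determining in the sense of Lemma \ref{cramer}, with witnesses taken in $\Delta$.

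The heart of the argument, and the main obstacle, is exhibiting a projective variety of $\mathbb{R}^N$ containing $S^\ast$. For a homogeneous $p$ of degree $k$, the composite $p\bigl((\mathcal{L}_\mu(js)-1)_j\bigr)$ is rational in $s$; clearing the denominator $\prod_{j=1}^N Q(js)^k$ leaves a numerator polynomial of degree at most $Nqk$ where $q=\deg Q$ (this is where rationality is essential: the numerator degree grows only linearly in $k$). Requiring this numerator to vanish identically imposes at most $Nqk+1$ linear conditions on the $\binom{N-1+k}{k}$ coefficients of $p$. Since $N\ge3$, this binomial is a polynomial in $k$ of degree $N-1\ge2$ and eventually outstrips the linear bound, so for large $k$ a nonzero homogeneous $p$ vanishes on $S^\ast$. (For $N=2$ the count fails, matching the goodness of the three-value case in Proposition \ref{3 value}.) The converse direction of Lemma \ref{cramer} then yields bounded $\tilde{U}\neq_d\tilde{V}$ in $\mathbb{R}^N$ agreeing under all projections from $S^\ast$. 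Finally, since the distributional relation is preserved by any common affine map $x\mapsto ax+b$ and $\Delta$ has nonempty interior, I can choose $a\neq0$ and $b$ with $a\,\mathrm{supp}+b\subset\Delta$ to move $\tilde{U},\tilde{V}$ into $\Delta$ without disturbing $\tilde{U}\neq_d\tilde{V}$ — exactly the rescaling trick used with the region $H$ in Proposition \ref{example maker}. Reassembling $U_0=1-\sum_{j\ge1}U_j$ (and likewise for $V$) and passing back through the homeomorphism to directing measures as in Remark \ref{corresp} gives $\Theta_1\neq\Theta_2$ with matching random Laplace transforms at every $s$, so $L$ is not good by Lemma \ref{equiv}.
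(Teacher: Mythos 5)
Your proposal is correct and follows essentially the same route as the paper's proof: reduce via Lemma \ref{equiv} and the simplex homeomorphism, subtract the $j=0$ constant, find a nonzero homogeneous polynomial vanishing on the curve of directions by the linear-versus-polynomial dimension count that rationality enables, then apply the converse of Lemma \ref{cramer} with the rescaling-into-the-simplex trick. The only (harmless) variations are that you treat general $N$ directly where the paper reduces to $N=3$, and you prove injectivity of the simplex parametrization by a maximal-modulus pole-peeling argument instead of the paper's differentiate-at-$0$ (moment/Vandermonde) argument.
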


Independent sums of exponential random variables give rational Laplace transforms, for example.

\begin{proof}
It suffices to handle the case $N=3$.  Assume that $\mathcal{L}_\mu(s)=p(s)/q(s)$ with $p, q$ polynomials sharing no common factor, and $q$ having no zeros in $[0, \infty)$.  We seek a homogeneous polynomial $r\neq0$ of $3$ variables for which 

\begin{equation}r\left(\frac{p(1s)}{q(1s)}-\frac{p(0s)}{q(0s)}, \frac{p(2s)}{q(2s)}-\frac{p(0s)}{q(0s)}, \frac{p(3s)}{q(3s)}-\frac{p(0s)}{q(30)}\right)=0.\end{equation}
This is equivalent to 

\begin{align*}r(p(1s)q(0s)q(2s)q(3s)-p(0s)q(1s)q(2s)q(3s), p(2s)q(0s)q(1s)q(3s)-\\p(0s)q(1s)q(2s)q(3s), p(3s)q(0s)q(1s)q(2s)-p(0s)q(1s)q(2s)q(3s))=0\numberthis\end{align*}
Let us say that $p$ has degree $n\ge0$, $q$ has degree $m\ge0$, and $r$ has degree $l>0$.  The space of polynomials in $s$ of degree at most $l(3m+n)$ has dimension linear in $l$ as a vector space over $\mathbb{R}$ whereas the space of polynomials in $3$ variables that are homogeneous, of degree $l$ is quadratic in $l$.  Therefore, there exists $l$ large enough such that the assignment of homogeneous degree $l$ polynomials in $3$ variables to polynomials of degree at most $l(3m+n)$ given by

\begin{align*}r \mapsto r(p(1s)q(0s)q(2s)q(3s)-p(0s)q(1s)q(2s)q(3s), p(2s)q(0s)q(1s)q(3s)-\\p(0s)q(1s)q(2s)q(3s), p(3s)q(0s)q(1s)q(2s)-p(0s)q(1s)q(2s)q(3s))\numberthis\end{align*}

has nontrivial kernel.

Thus we have a nonzero homogeneous polynomial $r$ for which $r(\mathcal{L}_\mu(1s)-\mathcal{L}_\mu(0s), \mathcal{L}_\mu(2s)-\mathcal{L}_\mu(0s), \mathcal{L}_\mu(3s)-\mathcal{L}_\mu(0s))=0$.  

Now, we use Lemma \ref{cramer} to find $W=(W_1, W_2, W_3)\neq_dZ=(Z_1, Z_2, Z_3)$ bounded random vectors for which $\forall s\ge0$ we have

\begin{align*}(\mathcal{L}_\mu(1s)-\mathcal{L}_\mu(0s), \mathcal{L}_\mu(2s)-\mathcal{L}_\mu(0s), \mathcal{L}_\mu(3s)-\mathcal{L}_\mu(0s))\cdot W=\\(\mathcal{L}_\mu(1s)-\mathcal{L}_\mu(0s), \mathcal{L}_\mu(2s)-\mathcal{L}_\mu(0s), \mathcal{L}_\mu(3s)-\mathcal{L}_\mu(0s))\cdot Z.\numberthis\end{align*}

We then find $C$ compact such that $W, V$ are both $C$-valued, and $a\neq0$, $b \in \mathbb{R}^3$ such that $aC+b\subset T'_3:=\{(a, b,c)|a+b+c\le1, a, b, c\ge0\}$.  This is possible because $C$ is compact and $T'_3$ has nonempty interior.  Thus, we may assume that $W, Z$ were $T'_3$-valued to begin with.  We now define $U, V$ via

\begin{align*}
U_0&=1-U_1-U_2-U_3 &V_0&=1-Z_1-Z_2-Z_3\\
U_1&=W_1 &V_1&=Z_1\\
U_2&=W_2 &V_2&=Z_2\\
U_3&=W_3 &V_3&=Z_3\\
U&=(U_0, U_1, U_2, U_3), &V&=(V_0, V_1, V_2, V_3).\numberthis
\end{align*}

Thus $U\neq_d V$ and $\forall s\ge0$, we have

\begin{multline*}(\mathcal{L}_\mu(0s), \mathcal{L}_\mu(1s), \mathcal{L}_\mu(2s), \mathcal{L}_\mu(3s))\cdot U=_d\\(\mathcal{L}_\mu(0s), \mathcal{L}_\mu(1s), \mathcal{L}_\mu(2s), \mathcal{L}_\mu(3s))\cdot V.\label{lap}\numberthis\end{multline*}
Let $\mu_k$ denote the probability distribution of $kT$.  We define $\alpha_1=U_0\mu_0+U_1\mu_1+U_2\mu_2+U_3\mu_3$ and $\alpha_2=V_0\mu_0+V_1\mu_1+V_2\mu_2+V_3\mu_3$ which are  $L$-valued.  Without loss of generality, we assume $\alpha_1$ and $\alpha_2$ are directing measures.

Since $T\neq0$, we have that all of the $\mu_k$ are distinct, nondegenerate, and therefore have Laplace transforms with derivatives that are never $0$.  We aim to show that $T_4$ is homeomorphic to $L$ through the map $(a, b, c, d)\mapsto a\mu_0+b\mu_1+c\mu_2+d\mu_3$.   This is surjective.  Also, it is injective because if there are $(a, b, c, d), (a', b', c', d',)$ such that $a\mu_0+b\mu_1+c\mu_2+d\mu_3=a'\mu_0+b'\mu_1+c'\mu_2+d'\mu_3$ then we may take Laplace transforms to obtain $\forall s \ge0$

\begin{multline*}a\mathcal{L}_\mu(0s)+b\mathcal{L}_\mu(1s)+c\mathcal{L}_\mu(2s)+d\mathcal{L}_\mu(3s)=\\a'\mathcal{L}_\mu(0s)+b'\mathcal{L}_\mu(1s)+c'\mathcal{L}_\mu(2s)+d'\mathcal{L}_\mu(3s).\numberthis\end{multline*}
We may take the derivative of this relation $k$ times, then take $s\downarrow 0$, then divide by $[\frac{d^k}{ds^k}\mathcal{L}_\mu](0)$.  Again, this operation shows us that $(a, b, c, d)=(a', b', c', d')$.  Continuous bijections between compact Hausdorff spaces are always homeomorphisms.

Therefore, $U\neq_dV$ implies that $\alpha_1\neq_d\alpha_2$ and $\Theta_1\neq\Theta_2$. Also, $\forall s \ge0,\, \mathcal{L}_{\alpha_1}(s)=_d\mathcal{L}_{\alpha_1}(s)$.  This is because the left side is the left side of \eqref{lap} and the right side is the right side of \eqref{lap}.  This suffices by Lemma \ref{equiv}.

\end{proof}

The answer to the Aldous problem actually changes despite the fact that the arithmetic dependences in some sense still remain.  To show this, roughly speaking we will use a very transcendental Laplace transform to make the arithmetic dependences irrelevant.

\begin{prop}
Let $T$ be Poisson distributed with parameter $\lambda$.  Let $N\ge1$.  Let $\mu$ be the distribution of $T$, and define $\mu_0, \mu_1, \mu_2, \dots, \mu_N$ as before.  Define $L$ as in Proposition \ref{general}.  Then $L$ is good. \label{Poisson}
\end{prop}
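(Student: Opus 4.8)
The plan is to establish goodness through Lemma \ref{equiv}, exactly as in Propositions \ref{3 value} and \ref{general}. Writing an $L$-valued directing measure as $\alpha = \sum_{j=0}^N U_j \mu_j$ for a $T_{N+1}$-valued random vector $U$, the random Laplace transform becomes $\mathcal{L}_\alpha(s) = (\mathcal{L}_\mu(0s), \dots, \mathcal{L}_\mu(Ns)) \cdot U$. After checking that $(b_0,\dots,b_N) \mapsto \sum_j b_j\mu_j$ is a homeomorphism $T_{N+1} \to L$ (a continuous bijection of compact Hausdorff spaces, injectivity being the degree-one instance of the computation below), the problem reduces to showing that the curve $S = \{(\mathcal{L}_\mu(0s), \dots, \mathcal{L}_\mu(Ns)) : s \ge 0\} \subset \mathbb{R}^{N+1}$ is determining. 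By Lemma \ref{cramer}, this is equivalent to proving that $S$ is contained in no projective variety. This is precisely the step at which the transcendence of the Poisson transform is exploited, in contrast with the rational case of Proposition \ref{general}.

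Next I would compute the transform explicitly. For $T \sim \mathrm{Poisson}(\lambda)$ with $\lambda > 0$ (the case $\lambda = 0$ being trivial, as then $L = \{\delta_0\}$), one has $\mathcal{L}_\mu(s) = \exp(-\lambda(1 - e^{-s}))$, hence $\mathcal{L}_\mu(js) = \exp(-\lambda(1 - e^{-js}))$. Substituting $w = e^{-s}$, which ranges over $(0,1]$ as $s$ ranges over $[0,\infty)$, the $j$-th coordinate of the curve becomes $e^{-\lambda} e^{\lambda w^j}$, uniformly for $j = 0, 1, \dots, N$.

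The heart of the argument is then the following. Suppose, for contradiction, that a nonzero polynomial $P$ on $\mathbb{R}^{N+1}$, homogeneous of degree $d$, vanishes on $S$; write $P = \sum_{\mathbf{a}} c_{\mathbf{a}}\, x_0^{a_0}\cdots x_N^{a_N}$, summed over multi-indices $\mathbf{a} = (a_0,\dots,a_N)$ with $\sum_j a_j = d$. Evaluating a monomial on the curve gives $\prod_{j}(e^{-\lambda}e^{\lambda w^j})^{a_j} = e^{-\lambda d}\exp(\lambda R_{\mathbf{a}}(w))$, where $R_{\mathbf{a}}(w) = \sum_{j=0}^N a_j w^j$. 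Thus vanishing of $P$ on $S$ reads $\sum_{\mathbf{a}} c_{\mathbf{a}}\exp(\lambda R_{\mathbf{a}}(w)) = 0$ for all $w \in (0,1]$; since this is an entire function of $w$ vanishing on a set with an accumulation point, it vanishes identically. Crucially, the coefficient vector of $R_{\mathbf{a}}$ is exactly $\mathbf{a}$, so distinct multi-indices give distinct polynomials, and moreover, because all $\mathbf{a}$ share the same coordinate sum $d$, two distinct admissible $R_{\mathbf{a}}$ can never differ by a constant. I would then invoke the linear independence of the exponentials $\{e^{\lambda R_{\mathbf{a}}}\}$: ordering the real polynomials $\lambda R_{\mathbf{a}}$ by growth as $w \to +\infty$ yields a unique strictly dominant term (this is where the no-two-differ-by-a-constant property is used); dividing by its exponential and letting $w \to +\infty$ forces its coefficient to vanish, and iterating kills every $c_{\mathbf{a}}$, so $P \equiv 0$, a contradiction.

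I expect the main obstacle to be this linear-independence step, and in particular articulating cleanly why the shared-degree constraint rules out constant differences among the $R_{\mathbf{a}}$; without it, $e^{\lambda R_{\mathbf{a}}}$ and $e^{\lambda R_{\mathbf{b}}}$ could be proportional and the dominance argument would collapse. Everything else, the Laplace computation, the change of variables $w = e^{-s}$, and the homeomorphism $T_{N+1} \cong L$, is routine and parallels the earlier propositions.
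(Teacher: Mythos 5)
Your proposal is correct and takes essentially the same route as the paper: the paper likewise reduces via Lemma \ref{equiv}, the homeomorphism $T_{N+1}\cong L$, and Lemma \ref{cramer}, then rules out a vanishing homogeneous polynomial by analytically continuing the identity to $s<0$ and extracting a dominant term as $s\to-\infty$, which is exactly your substitution $w=e^{-s}\to+\infty$, with the paper's lexicographically largest monomial playing the role of your growth-dominant $R_{\mathbf{a}}$. Your explicit remark that the shared total degree $d$ prevents two distinct $R_{\mathbf{a}}$ from differing by a constant cleanly articulates a point the paper leaves implicit in its dominance claim.
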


\begin{proof}
We will argue for $N=3$, with the general case being similar.  Let $\mathcal{L}(s)=e^{\lambda(e^{-s}-1)}$ denote the Laplace transform of $\mu$.  Because we already know that $T_4$ is homeomorphic to $L$ in the natural way (see the last proof), it suffices to show that there cannot be any $U=(U_0, U_1, U_2, U_3)\neq_d V=(V_0, V_1, V_2, V_3)$ defined on $T_4$ for which $\forall s \ge0,\, (\mathcal{L}(0s), \mathcal{L}(1s), \mathcal{L}(2s), \mathcal{L}(3s))\cdot U=(\mathcal{L}(0s), \mathcal{L}(1s), \mathcal{L}(2s), \mathcal{L}(3s))\cdot V$.  We will in fact show that there is no homogeneous polynomial other than $0$ that vanishes on the image of the curve $(\mathcal{L}(0s), \mathcal{L}(1s), \mathcal{L}(2s), \mathcal{L}(3s))\in \mathbb{R}^4$ defined for $s\ge0$. Suppose that $r\neq0$ is such a homogeneous polynomial of degree $l$, say.  Order the set $M_l$ of monic monomials of total degree $l$ in $4$ variables by ordering lexicographically on the exponents, with the fourth variable taking highest priority, then the third, second, then first.  This is a total ordering.  Write $a_w$ for the coefficient of any monic monomial $w$ in $r$.  Find the largest monic monomial with a nonzero coefficient in $r$.  Call this monomial $m(x_0, x_1, x_2, x_3)$.  Then we have

\begin{equation}r(x_0, x_1, x_2, x_3)=a_mm(x_0, x_1, x_2, x_3)+\sum_{w<m\in M_l}a_ww(x_0, x_1, x_2, x_3)\end{equation}
with $a_m\neq 0$.

We have $\forall s \ge0$
\begin{equation}r(\mathcal{L}(0s), \mathcal{L}(1s), \mathcal{L}(2s), \mathcal{L}(3s))=0\label{extend}\end{equation}
so by the theorem of complex analysis asserting the equality of holomorphic functions defined on the same connected open domain, agreeing on a set with an accumulation point within this domain, \eqref{extend} holds also for $s<0$.  The term of $r(\mathcal{L}(0s), \mathcal{L}(1s), \mathcal{L}(2s), \mathcal{L}(3s))$ corresponding to $m$ goes to $\infty$ as $s\rightarrow -\infty$ faster than any of the other terms, so the coefficient $a_m$ is $0$, contradiction.
\end{proof}

\subsection{The Normal Case}

The normal case is another case of significance to the next section on the continuous time analog of the present problem.

\begin{lem} 

Let $N$ be the collection of normal distributions, including the degenerate ones.  For all $\mu \in N$ let $M(\mu)=$mean of $\mu$ and let $V(\mu)=$ variance of $\mu$.  Then the map $(M, V): N\rightarrow \mathbb{R} \times [0, \infty)$ is a homeomorphism, hence measurable isomorphism.  \label{homeo}

\end{lem}

\begin{proof}
This follows from convergence of types.
\end{proof}

\begin{rem}
Let $\Theta$ be a mixing measure supported in $N$.  Since $\Theta$ is then a probability measure on $N$, we can view $M, V$ as random variables giving the (random) mean and variance of an element of $N$ drawn with prior distribution $\Theta$.  Then the joint distribution of the corresponding exchangeable sequence $X$ is given by $(X_i)_{\{i \in \mathbb{N}\}}=_d(A+B^{1/2}\mathcal{N}(0, 1)_i)_{\{i\in\mathbb{N}\}}$ where the entire family $\{\mathcal{N}(0, 1)_1, \mathcal{N}(0, 1)_2, \dots, (A, B)\}$ is independent (but the notation indicates $A, B$ may not be independent), $(A,B)=_d (M,V)$, and $\mathcal{N}(0, 1)_i$ is normal with mean $0$ and variance $1$.
\end{rem}

See, for instance, p.29 of \cite{A} regarding this remark.

We highlight in the remark that the distribution of $(M, V)$ is calculated relative to $\Theta$, and that it is necessary to use $(A, B)$ instead of $(M, V)$ when we deal with the independent normals because these normals and $(A, B)$ are constructed on the same probability space.  

We already specified that $X, Y$ corresponds to $\Theta_1, \Theta_2$ and $\alpha_1, \alpha_2$ via Remark \ref{corresp}.  For this subsection, when $\Theta_1, \Theta_2$ are supported on $N$, we will use  $(M_1, V_1)$ to indicate $(M, V)$ defined on the probability space $(N, \Theta_1)$ and $(M_2, V_2)$ to indicate $(M, V)$ defined on the probability space $(N, \Theta_2)$.

We have the following transform inversion fact.

\begin{lem}
Suppose $\mu, \nu$ be probability measures on $\mathbb{R}\times [0, \infty)$ such that $\forall t, s \in \mathbb{R}\times[0, \infty)$ we have 

\begin{equation}\int_{\mathbb{R}\times[0,\infty)}e^{itx-sy}d\mu(x,y)=\int_{\mathbb{R}\times[0,\infty)}e^{itx-sy}d\nu(x,y).\end{equation}
Then $\mu=\nu$.\label{hybrid}
\end{lem}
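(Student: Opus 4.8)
The plan is to \emph{decouple} the two transforms and apply the standard one-variable uniqueness theorems in succession. First I would fix $s \ge 0$ and introduce the finite nonnegative measure $\mu^{(s)}$ on $\mathbb{R}$ defined by $\mu^{(s)}(B) = \int_{B \times [0,\infty)} e^{-sy}\, d\mu(x,y)$, and likewise $\nu^{(s)}$. These are genuine finite Borel measures, with total mass at most $1$ since $e^{-sy}\le 1$, and countable additivity follows from monotone convergence. The hypothesis then rewrites, for each fixed $s$, as
\begin{equation}
\int_{\mathbb{R}} e^{itx}\, d\mu^{(s)}(x) = \int_{\mathbb{R}} e^{itx}\, d\nu^{(s)}(x) \qquad \forall t \in \mathbb{R}.
\end{equation}
Thus $\mu^{(s)}$ and $\nu^{(s)}$ have the same Fourier transform, so by uniqueness of the characteristic function for finite measures, $\mu^{(s)} = \nu^{(s)}$ for every fixed $s \ge 0$.

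Next I would reverse the roles of the two coordinates. For a fixed Borel set $B \subseteq \mathbb{R}$, define finite measures on $[0,\infty)$ by $\mu_B(C) = \mu(B \times C)$ and $\nu_B(C) = \nu(B\times C)$. The equality $\mu^{(s)}(B) = \nu^{(s)}(B)$ just obtained says precisely that
\begin{equation}
\int_{[0,\infty)} e^{-sy}\, d\mu_B(y) = \int_{[0,\infty)} e^{-sy}\, d\nu_B(y) \qquad \forall s \ge 0,
\end{equation}
so $\mu_B$ and $\nu_B$ have the same Laplace transform. By uniqueness of the Laplace transform for finite measures on $[0,\infty)$ (the same inversion fact used throughout this section), $\mu_B = \nu_B$.

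Combining these two steps gives $\mu(B \times C) = \nu(B \times C)$ for all Borel $B \subseteq \mathbb{R}$ and $C \subseteq [0,\infty)$. Since the measurable rectangles $B \times C$ form a $\pi$-system generating the product Borel $\sigma$-algebra on $\mathbb{R} \times [0,\infty)$, Dynkin's $\pi$-$\lambda$ theorem forces $\mu = \nu$, which is the claim.

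The only points requiring any care are routine: verifying that $\mu^{(s)}$ is a well-defined countably additive Borel measure (monotone convergence), and quoting the correct uniqueness statements for finite rather than probability measures. I do not anticipate a genuine obstacle, since the decoupling reduces the bivariate assertion to two classical one-dimensional inversion facts; the essential content is simply that the hybrid transform factors so that each variable can be inverted independently of the other.
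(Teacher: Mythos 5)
Your proof is correct. Note that the paper itself states Lemma \ref{hybrid} without proof, treating it as a known ``transform inversion fact,'' so there is no printed argument to compare against; your write-up supplies exactly the justification the paper leaves implicit. The decoupling strategy works, and one structural choice you made deserves emphasis: you slice in the Fourier variable \emph{first}, which is the right order. Because the weight $e^{-sy}$ is nonnegative, the sliced objects $\mu^{(s)}$, $\nu^{(s)}$ are genuine finite nonnegative measures, so the classical uniqueness theorem for characteristic functions of finite measures applies directly (total masses agree by evaluating at $t=0$). Had you fixed $t$ first instead, the sliced objects would be complex measures on $[0,\infty)$, and you would need a Laplace uniqueness statement for complex measures --- true, but requiring an extra analyticity argument. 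Your remaining steps are all sound: $\mu^{(s)}(B)=\nu^{(s)}(B)$ for every Borel $B$ does rewrite as equality of the Laplace transforms of the finite measures $\mu_B,\nu_B$ on $[0,\infty)$, whose uniqueness (masses matching at $s=0$) gives $\mu(B\times C)=\nu(B\times C)$ on all measurable rectangles, and since these form a $\pi$-system generating the Borel $\sigma$-algebra of $\mathbb{R}\times[0,\infty)$ (the product and Borel $\sigma$-algebras coincide here by second countability), Dynkin's theorem closes the argument. The routine verifications you flag (countable additivity of $\mu^{(s)}$ by monotone convergence, finite- versus probability-measure versions of the inversion theorems) are indeed the only points of care, and you have handled them.
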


Notice that $N$ is not a subset of $P^+$.  The author is uncertain if $N$ is good or not, which seems to rely on a generalization of Muntz's Theorem (see \cite{Muntz}) which would include the sequence of points $t_n=1/n$ in the role of the values of the parameter at which the Laplace transform is known a priori.  However, what is true is the following.

\begin{prop}
Let $\Theta_1, \Theta_2$ supported in $N$ be given.  Suppose that $V_1, V_2$ have finite MGF in some neighborhood around $0$, and that $\forall n >0,\, S_n=_dT_n$.  Then $\Theta_1=\Theta_2$. \label{normal}
\end{prop}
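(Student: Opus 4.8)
The plan is to reduce the statement to an inversion problem for the joint law of the mean and variance, and then to use the finite-MGF hypothesis to run an analytic-continuation argument. By Lemma \ref{homeo} the map $(M,V)$ identifies $N$ measurably with $\mathbb{R}\times[0,\infty)$, so proving $\Theta_1=\Theta_2$ is the same as proving $(M_1,V_1)=_d(M_2,V_2)$ as random vectors in $\mathbb{R}\times[0,\infty)$. By Lemma \ref{hybrid}, to get this equality in law it suffices to show that the hybrid transforms agree, i.e. that
\[
\Psi_1(t,s)=\Psi_2(t,s)\qquad\text{for all }(t,s)\in\mathbb{R}\times[0,\infty),
\]
where $\Psi_j(t,s):=\mathbb{E}[e^{itM_j-sV_j}]$ is exactly the transform appearing in Lemma \ref{hybrid} applied to the law of $(M_j,V_j)$. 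Thus the whole problem becomes: extract enough information about $\Psi_1,\Psi_2$ from the hypothesis $S_n=_dT_n$.

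Next I would translate the hypothesis. By the representation in the remark following Lemma \ref{homeo}, if an exchangeable sequence has mean--variance pair distributed as $(M_j,V_j)$, then conditionally on that pair its $n$-th partial sum is normal with mean $n$ times the mean and variance $n$ times the variance; hence its characteristic function at $\xi$ equals
\[
\mathbb{E}\big[e^{i(n\xi)M_j-\frac12(n\xi^2)V_j}\big]=\Psi_j\big(n\xi,\tfrac12 n\xi^2\big).
\]
Applying this to $S_n$ (with $j=1$) and to $T_n$ (with $j=2$), the hypothesis $S_n=_dT_n$ for every $n\ge1$ says precisely that $\Psi_1$ and $\Psi_2$ agree along the family of parabolas $\{(n\xi,\tfrac12 n\xi^2):\xi\in\mathbb{R}\}$. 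Reparametrizing by $t=n\xi$, for each fixed $t\ne0$ we learn that $\Psi_1(t,s)=\Psi_2(t,s)$ at the points $s=t^2/(2n)$, $n=1,2,3,\dots$, a sequence of positive numbers decreasing to $0$.

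The final and crucial step is the identity theorem in the variable $s$, and this is where the MGF hypothesis is essential. Fix $t\in\mathbb{R}$ and set $g_j(s):=\Psi_j(t,s)=\mathbb{E}[e^{itM_j}e^{-sV_j}]$. Since $V_j\ge0$ has finite MGF on $(-\epsilon,\epsilon)$ for some $\epsilon>0$ and $|e^{itM_j}|=1$, the integrand is dominated by $e^{-\mathrm{Re}(s)V_j}$, which is integrable for $\mathrm{Re}(s)>-\epsilon$; a routine Morera / differentiation-under-the-integral argument then shows $g_j$ is holomorphic on the half-plane $\{\mathrm{Re}(s)>-\epsilon\}$. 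The functions $g_1,g_2$ agree on the set $\{t^2/(2n):n\ge1\}$, which accumulates at the point $0$. I expect the main obstacle to be exactly this point: the accumulation occurs at $s=0$, which lies on the boundary of the a priori domain $s\ge0$, so the identity theorem would be unavailable without first continuing $g_j$ past $0$, and it is precisely the finite-MGF assumption that promotes $0$ to an interior point of the domain of holomorphy (this is the analogue of the Müntz-type input flagged before the statement). Granting this, the identity theorem forces $g_1\equiv g_2$ on the whole half-plane, in particular $\Psi_1(t,s)=\Psi_2(t,s)$ for all $s\ge0$ and all $t\ne0$; the case $t=0$ then follows by continuity of $\Psi_j$ (dominated convergence as $t\to0$). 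This gives $\Psi_1=\Psi_2$ on all of $\mathbb{R}\times[0,\infty)$, and Lemmas \ref{hybrid} and \ref{homeo} yield $\Theta_1=\Theta_2$, as desired.
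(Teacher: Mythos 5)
Your proof is correct and takes essentially the same route as the paper's: both translate $S_n=_dT_n$ into agreement of the hybrid transform $\mathbb{E}[e^{itM_j-sV_j}]$ at the points $s=t^2/(2n)$ accumulating at $0$, invoke the finite-MGF hypothesis to make $s=0$ an interior point of the domain of holomorphy so the identity theorem applies, and conclude via Lemmas \ref{hybrid} and \ref{homeo}. Your explicit treatment of the $t=0$ case by continuity is a minor point the paper leaves implicit, not a different argument.
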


\begin{proof}

Let $(A, B), (A', B')$ have the same distributions as $(M_1, V_1), (M_2, V_2)$ respectively with the three random vectors/variables $(A, B), (A', B'), \mathcal{N}(0, 1)$ all independent.  From $S_n=_dT_n$ we learn that $\forall n \ge 0,\, nA+(nB)^{1/2}\mathcal{N}(0, 1)=_dnA'+(nB')^{1/2}\mathcal{N}(0, 1)$  Computing the characteristic function of both sides reveals that $\forall n \ge0, t \in \mathbb{R}$

\begin{equation}\mathbb{E}[e^{itnA-t^2nB/2}]=\mathbb{E}[e^{itnA'-t^2nB'/2}]\end{equation}
or equivalently $\forall n >0,\, t\in \mathbb{R}$

\begin{equation}\mathbb{E}[e^{itA-\frac{t^2B}{2n}}]=\mathbb{E}[e^{itA'-\frac{t^2B'}{2n}}].\label{characteristic}\end{equation}
Looking at \eqref{characteristic} for fixed $t$ and varying $n$, it follows that the convergence of the MGF in a neighborhood of $0$ is precisely what is needed to be able to use complex analysis to conclude that for each fixed $t$, we have that $\forall s \ge0$

\begin{equation}\mathbb{E}[e^{itA-sB}]=\mathbb{E}[e^{itA'-sB'}]\end{equation}
Particularly, we are using the fact that holomorphic functions defined on a common connected open domain, agreeing on a set with a limit point in the domain must be equal.  Namely, this limit point would be $s=0$ regardless of which $t$ was fixed.  The MGF hypothesis is what allows $s=0$ to be in the (interior of the) domain of these transforms.

Then, by Lemma \ref{hybrid} it follows that $(A, B)=_d(A', B')$ so that $(M_1, V_1)=_d(M_2, V_2)$ from which it follows by Lemma \ref{homeo} that $\Theta_1=\Theta_2$.

\end{proof}

\section{Continuous Time Exchangeability Problem}

We could view the questions answered in the last section from the perspective of $S_n, T_n$.  These are mixtures of partial sums of iid sequences.  From this point of view, it is natural to consider mixtures of L\'evy Processes, which are the continuous time analog.  Recall that a L\'evy process is an independent stationary increments process that is continuous in probability and starts at $0$.  We will use the notation $S_t, T_t$ for mixtures of L\'evy processes, after they are defined, in order to reflect this analogy.  In order to aid our discussion, we recall:

\begin{lem}[L\'evy Khintchine Formula]
Let $Z=(Z_t)_{t\ge0}$ be a L\'evy process.  Then there exist unique $\beta, \sigma^2, \nu$ such that $\nu$ is a finite measure on $\mathbb{R}$ with $\nu(\{0\})=0$, $\sigma^2\ge0$, $\beta \in \mathbb{R}$ and $\forall u\in\mathbb{R}, t\ge0$ we have

\begin{equation}\mathbb{E}[e^{iuZ_t}]=\exp\left\{iut\beta-\frac{u^2t\sigma^2}{2}+t\int_{\mathbb{R}}\left(e^{iux}-1-\frac{iux}{1+x^2}\right)\frac{1+x^2}{x^2}d\nu(x)\right\}\label{Levy}.\end{equation}
Furthermore, every $\beta \in \mathbb{R}, \sigma^2\ge0, \nu$ a finite measure on $\mathbb{R}$ with no atom at $0$ corresponds to a unique (up to distributional equality) L\'evy process with characteristic function given by \eqref{Levy}.
\end{lem}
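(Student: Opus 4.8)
The plan is to recognize this as the classical L\'evy--Khintchine representation and to prove it in three movements: reduce the $t$-dependence to a single characteristic exponent, establish existence and uniqueness of the triple for the resulting infinitely divisible law, and then carry out the converse construction. Writing $\phi_t(u)=\mathbb{E}[e^{iuZ_t}]$, the independent stationary increments give the functional equation $\phi_{s+t}=\phi_s\phi_t$ with $\phi_0\equiv 1$, and continuity in probability makes $t\mapsto\phi_t(u)$ continuous. Since $Z_1=\sum_{k=1}^{n}(Z_{k/n}-Z_{(k-1)/n})$ is a sum of $n$ i.i.d.\ increments, $Z_1$ is infinitely divisible, and a standard lemma shows its characteristic function never vanishes, so there is a continuous branch of the logarithm. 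Hence
\[
\phi_t(u)=\phi_1(u)^t,\qquad \psi(u):=\log\phi_1(u),\qquad \phi_t=e^{t\psi},
\]
and everything reduces to proving that the single function $\psi$ has the stated canonical form.

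For existence I would follow Khintchine's route. Let $\mu_{1/n}$ be the law of $Z_{1/n}$. Since $\log\phi_1=n\log\phi_{1/n}$ and $\phi_{1/n}\to 1$ uniformly on compacts,
\[
\psi(u)=\lim_{n\to\infty} n\big(\phi_{1/n}(u)-1\big)=\lim_{n\to\infty}\int_{\mathbb{R}}\big(e^{iux}-1\big)\,d\big(n\mu_{1/n}\big)(x).
\]
The device is to introduce the finite measures $G_n(dx)=n\,\tfrac{x^2}{1+x^2}\,\mu_{1/n}(dx)$ and to show that $(G_n)$ has uniformly bounded total mass and is tight, so a subsequence converges weakly to a finite measure $G$; the $\tfrac{iux}{1+x^2}$ compensator then forces the drift limit $\beta$ to exist. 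Rewriting the integrand against $G_n$ through the Khintchine kernel $\big(e^{iux}-1-\tfrac{iux}{1+x^2}\big)\tfrac{1+x^2}{x^2}$, whose value at $x=0$ is $-u^2/2$, and passing to the limit yields $\psi$ in canonical form; one peels off the atom $G(\{0\})=:\sigma^2$, which supplies the Gaussian term, and sets $\nu:=G|_{\mathbb{R}\setminus\{0\}}$. I expect this tightness-and-extraction step, together with the bookkeeping needed so the compensated drift limit exists, to be the main obstacle: it is the only place where genuine analysis, rather than formal manipulation of the exponent, is required.

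For uniqueness I would recover the triple directly from $\psi$ using a second symmetric difference, which annihilates the affine-in-$u$ drift contribution. Writing the Khintchine measure as $G=\sigma^2\delta_0+\tfrac{x^2}{1+x^2}\nu$, a short computation gives
\[
2\psi(u)-\psi(u+h)-\psi(u-h)=2\int_{\mathbb{R}} e^{iux}\,(1-\cos hx)\,\frac{1+x^2}{x^2}\,dG(x),
\]
whose right-hand side is the Fourier transform of the finite measure with density $(1-\cos hx)\tfrac{1+x^2}{x^2}$ against $G$. Since finite measures are determined by their transforms, this measure is recovered from $\psi$ for each $h$; letting the density vary in $h$ pins down $G$ pointwise (the limit $h\to 0$ isolating the atom $\sigma^2$), hence $\nu$, and then $\beta$ is read off from $\psi$. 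This gives uniqueness of $(\beta,\sigma^2,\nu)$.

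For the converse, given a triple I would construct $Z$ as a sum of independent pieces: a drift-plus-Brownian part $\beta t+\sigma B_t$ and a jump part driven by a Poisson random measure $N$ on $[0,\infty)\times\mathbb{R}$ with intensity $dt\,\pi(dx)$, where $\pi(dx)=\tfrac{1+x^2}{x^2}\nu(dx)$, compensating the small jumps to secure $L^2$ convergence. The characteristic functional of a Poisson random measure produces exactly the exponential form, so $Z_t$ has characteristic function $e^{t\psi(u)}$, and independent stationary increments, continuity in probability, and $Z_0=0$ are immediate from the construction. More economically, one may instead check that $e^{t\psi}$ is a characteristic function for each $t$ (it is a pointwise limit of compound-Poisson characteristic functions, hence a characteristic function by L\'evy's continuity theorem), prescribe the increment distributions, invoke Kolmogorov extension, and deduce continuity in probability from the continuity of $t\mapsto e^{t\psi(u)}$.
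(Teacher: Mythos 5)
The paper never proves this lemma: it is recalled as a classical fact (the L\'evy--Khintchine formula in Khintchine's bounded canonical-measure parameterization) and used as a black box, so there is no internal proof to compare against. Your proposal reconstructs the standard textbook proof (Khintchine's route, as in Gnedenko--Kolmogorov or Chung), and its architecture is sound: the reduction $\phi_t=e^{t\psi}$ via infinite divisibility, non-vanishing, and a continuous logarithm; existence via the accompanying laws $n(\phi_{1/n}-1)$ and tightness of $G_n(dx)=n\,\tfrac{x^2}{1+x^2}\,\mu_{1/n}(dx)$; uniqueness by second symmetric differences; and the converse either by compensated Poisson integrals or by L\'evy continuity plus Kolmogorov extension. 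Three points need repair or expansion. First, a notational mismatch: in the paper's statement $\nu$ is already the finite canonical measure with the kernel $\tfrac{1+x^2}{x^2}$ factored out, so the Khintchine measure is $G=\sigma^2\delta_0+\nu$, \emph{not} $G=\sigma^2\delta_0+\tfrac{x^2}{1+x^2}\nu$; your formula silently treats $\nu$ as the L\'evy measure, conflicting with your own converse where you correctly set $\pi(dx)=\tfrac{1+x^2}{x^2}\nu(dx)$. With $G=\sigma^2\delta_0+\nu$ and the integrand at $x=0$ read as its limiting value $h^2/2$, your identity
\begin{equation*}
2\psi(u)-\psi(u+h)-\psi(u-h)=2\int_{\mathbb{R}}e^{iux}\,(1-\cos hx)\,\frac{1+x^2}{x^2}\,dG(x)
\end{equation*}
is exactly right. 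Second, a single $h$ does not pin down $G$: the factor $1-\cos hx$ vanishes on the lattice $\tfrac{2\pi}{h}\mathbb{Z}$, so atoms of $G$ there are invisible for that $h$; you need two incommensurable values of $h$, or the classical device of integrating the identity in $h$ over $[0,1]$, which yields the density $1-\tfrac{\sin x}{x}$, strictly positive off $0$. Your phrase ``letting the density vary in $h$'' gestures at this but must be made explicit, since the claim fails verbatim for a fixed $h$. Third, the uniform bound on $G_n(\mathbb{R})$ and tightness --- which you rightly identify as the only genuinely analytic step --- is asserted rather than proved; it follows from the standard truncation inequalities applied to $\phi_{1/n}$ together with the uniform-on-compacts convergence $n(1-\Re\,\phi_{1/n})\to-\Re\,\psi$, and the compensated drifts $\beta_n$ then converge along the same subsequence. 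Modulo these repairs, your proposal is a correct and essentially complete plan for the classical theorem the paper cites without proof.
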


We would now like to define the notion of a mixture of L\'evy processes.  For technical reasons, we downplay the role of exchangeability.  For the moment, we also focus on characteristic functions in order to be able to state the definition before worrying about measurability concerns associated with generalizing \eqref{iid-theta}.  Then we will show how the definitions we make are directly analogous to those of the previous section.  These claims are mostly for checking intuition about what a mixture of L\'evy Processes should mean, but they will also be used in Subsection \ref{poiss}. (They will not be featured as prominently in the next subsection.)

Given parameters $\beta\in\mathbb{R}, \sigma^2\ge0, \nu$ finite measure on $\mathbb{R}$ with no atom at $0$, we will use the notation $\phi_{\beta, \sigma^2, \nu, t_1, \dots, t_n}$ for the $n$-variate characteristic function of $(Z_{t_1}, \dots, Z_{t_n})$ where the L\'evy process $Z_t$ is chosen for parameters $\beta, \sigma^2, \nu$.  We will use $M_0^+$ to denote the collection of nonnegative finite measures on $\mathbb{R}$ that vanish at $\{0\}$. From now on, we will always implicitly assume $(\beta, \sigma^2, \nu)\in \mathbb{R}\times [0,\infty)\times M_0^+$.  We will use $L_{\beta, \sigma^2, \nu}:=\bigg((Z_t)_{t\ge0}\bigg)^*(\mathbb{P})$ to mean the pushforward of $\mathbb{P}$ (the probability measure on whichever space the process under study is defined on) via the L\'evy process $Z=(Z_t)_{t\ge0}$, i.e. the (joint) distribution of the L\'evy Process.

\begin{defn}

A \textbf{mixture of L\'evy processes} is $S=(S_t)_{t\ge0}$ such that there exist a probability measure $\Theta$ on $\mathbb{R}\times [0,\infty)\times M_0^+$ for which the joint characteristic function of $S$ is specified by $\forall n\ge1, \forall 0\le t_1\le \dots \le t_n, u_1, \dots, u_n \in \mathbb{R}$ we have

\begin{equation}\mathbb{E}[e^{i\sum_{j=1}^n u_jS_{t_j}}]=\int_{\mathbb{R}\times [0,\infty)\times M^+} \phi_{\beta, \sigma^2, \nu, t_1, \dots, t_n}(u_1, \dots, u_n) d\Theta (\beta, \sigma^2, \nu)\label{chf}\end{equation}
We call $\Theta$ the mixing measure.

\end{defn}

By using discrete time De Finetti, it follows that in this case $\Theta$ is uniquely determined by the distribution of $S$.

In the discrete time case, we were able to obtain a discrete time process (namely $S_n$) from the mixing measure $\Theta$.  It is reasonable to ask if the same can be done here.

\begin{lem}
Given a probability measure $\Theta$ on $\mathbb{R}\times [0,\infty)\times M_0^+$, there is a unique (up to joint distributional equality) stochastic process $S=(S_t)_{t\ge0}$ for which $\Theta$ is the mixing measure.
\end{lem}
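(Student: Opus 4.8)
The plan is to build the process from the bottom up: I will manufacture a consistent family of finite-dimensional distributions as $\Theta$-mixtures of the finite-dimensional distributions of honest L\'evy processes, invoke the Kolmogorov extension theorem to realize them as a single process $S$, and then read off from \eqref{chf} that $\Theta$ is its mixing measure. Uniqueness will fall out at the end, since by definition $\Theta$ being a mixing measure for $S$ forces every joint characteristic function of $S$ to equal the right-hand side of \eqref{chf}, and characteristic functions determine finite-dimensional laws.

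The first task is to exhibit the integrand of \eqref{chf} explicitly and check that it is measurable in the parameter $\theta=(\beta,\sigma^2,\nu)$, so that the mixture even makes sense. Writing $\eta_\theta(v):=iv\beta-\tfrac{v^2\sigma^2}{2}+\int_{\mathbb{R}}\left(e^{ivx}-1-\tfrac{ivx}{1+x^2}\right)\tfrac{1+x^2}{x^2}\,d\nu(x)$, the L\'evy--Khintchine formula \eqref{Levy} gives $\mathbb{E}[e^{ivZ_t}]=e^{t\eta_\theta(v)}$, and the independent, stationary increments property yields, for $0\le t_1\le\dots\le t_n$ with $t_0:=0$ and $v_k:=\sum_{j=k}^n u_j$,
\begin{equation}
\phi_{\beta,\sigma^2,\nu,t_1,\dots,t_n}(u_1,\dots,u_n)=\exp\left\{\sum_{k=1}^n (t_k-t_{k-1})\,\eta_\theta(v_k)\right\}.
\end{equation}
The integrand $g_v(x):=\left(e^{ivx}-1-\tfrac{ivx}{1+x^2}\right)\tfrac{1+x^2}{x^2}$ extends continuously to $x=0$ (with value $-v^2/2$) and is bounded on $\mathbb{R}$, so $\nu\mapsto\int g_v\,d\nu$ is measurable for the structure on $M_0^+$ generated by the evaluations $\nu\mapsto\nu(B)$ (approximate $g_v$ by simple functions). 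Hence $\theta\mapsto\phi_{\beta,\sigma^2,\nu,t_1,\dots,t_n}(u)$ is jointly measurable, and consequently the assignment $\theta\mapsto\mu^\theta_{t_1,\dots,t_n}$, the law of $(Z_{t_1},\dots,Z_{t_n})$ on $\mathbb{R}^n$, is measurable into $P(\mathbb{R}^n)$.

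I would then define $\mu_{t_1,\dots,t_n}:=\int \mu^\theta_{t_1,\dots,t_n}\,d\Theta(\theta)$. Since $|\phi_{\theta,\dots}|\le 1$ this integral converges, produces a Borel probability measure on $\mathbb{R}^n$, and by Fubini its characteristic function is exactly the right-hand side of \eqref{chf}. For the consistency conditions, note that for each fixed $\theta$ the family $\{\mu^\theta_{t_1,\dots,t_n}\}$ consists of the finite-dimensional distributions of an actual stochastic process (the L\'evy process $Z$), hence is invariant under coordinate permutations and compatible under marginalization. Both operations are linear and commute with $\int(\cdot)\,d\Theta$, so the mixed family $\{\mu_{t_1,\dots,t_n}\}$ inherits the same consistency. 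The Kolmogorov extension theorem then yields a process $S=(S_t)_{t\ge0}$ having these finite-dimensional distributions, and by the computation above its joint characteristic functions satisfy \eqref{chf}, so $\Theta$ is a mixing measure for $S$; this gives existence.

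Uniqueness is then immediate: if $S$ and $S'$ both have mixing measure $\Theta$, then \eqref{chf} forces their joint characteristic functions to agree at every $(t_1,\dots,t_n)$ and every $(u_1,\dots,u_n)$, so all their finite-dimensional laws coincide, whence $S$ and $S'$ are jointly equidistributed. The only genuinely non-routine steps are the measurability of $\nu\mapsto\int g_v\,d\nu$, which is what legitimizes the mixture, and the increment bookkeeping that turns the multi-time characteristic function into a product; I expect the measurability verification to be the main obstacle, after which the Kolmogorov extension theorem carries the existence argument and uniqueness is a one-line consequence of \eqref{chf}.
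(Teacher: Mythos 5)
Your proposal is correct and takes essentially the same route as the paper: the paper's proof likewise defines the finite-dimensional distributions via \eqref{chf}, checks Kolmogorov consistency, invokes the Kolmogorov extension theorem for existence, and observes that uniqueness is built into the definition, since \eqref{chf} determines every joint characteristic function and hence all finite-dimensional laws. Your explicit verification that $\theta\mapsto\phi_{\beta,\sigma^2,\nu,t_1,\dots,t_n}(u)$ is measurable (via boundedness and continuity of the integrand $g_v$) simply fills in a detail the paper leaves implicit.
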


\begin{proof}
First we check existence.  Restrict to finite dimensional distributions, using \eqref{chf} to define these finite dimensional distributions.  Then check Kolmogorov consistency and use Kolmogorov extension theorem.

The uniqueness up to distributional equality is built into the definition of mixture of L\'evy processes.
\end{proof}

From L\'evy continuity, stationarity of increments and the fact that distributional convergence to $0$ is the same as in probability convergence to $0$, it follows that all mixtures of L\'evy processes are continuous in probability.

We will use the following notation: $S=(S_t)_{t\ge0}, T=(T_t)_{t\ge0}$ will be the mixture of L\'evy processes, with mixing measures $\Theta_1, \Theta_2$.  We will have no need for trying to define some analog of $\alpha_1, \alpha_2$ in this context.

The set $\mathcal{I}$ of infinitely divisible distributions is closed in $P(\mathbb{R})$, hence measurable.

We regard \eqref{Levy} as specifying a bijection between $\mathbb{R}\times [0,\infty)\times M_0^+$ and the collection $\mathfrak{L}$ of distributions of L\'evy processes $\bigg((X_t)_{t\ge0}\bigg)^*(\mathbb{P})$.  So $\mathfrak{L}\subset P(\mathbb{R}^{[0,\infty)})$.  
$\mathfrak{L}$ is given the smallest $\sigma$ algebra so that passage from an element of $\mathfrak{L}$ to its marginals is measurable from $\mathfrak{L}$ to $\mathcal{I}$.  That is, $\forall t_0\ge 0,\, (X_t)_{t\ge0}^*(\mathbb{P})\mapsto X_{t_0}^*(\mathbb{P})$ should be measurable. It follows from standard proofs of \eqref{Levy} that the bijection specified by \eqref{Levy} is a measurable isomorphism.  There is also a natural measurable isomorphism between $\mathfrak{L}$ and $\mathcal{I}$ via $\bigg((X_t)_{t\ge0}\bigg)^*(\mathbb{P})\mapsto X_1^*(\mathbb{P})$.  

Because $P(\mathbb{R})$ with vague convergence is a Polish space, and $\mathcal{I}$ is closed in $P(\mathbb{R})$, we have that $\mathfrak{L}$ is a standard Borel space.  Therefore, our definition of a mixture of L\'evy processes is entirely parallel to the notion of mixture from the last section.

Because of these observations, it is sensible to state and we have proven the following:

\begin{lem}
$S$ is a mixture of L\'evy processes if and only if there exists $\Theta$ a probability measure on $\mathfrak{L}$ for which $\forall A\subset \mathbb{R}^{[0,\infty)}$ product measurable,

\begin{equation} 
\mathbb{P}(S\in A)=\int_\mathfrak{L}\gamma(A)d\Theta(\gamma)
\end{equation}
if and only if there exists $\Theta$ a probability measure on $\mathbb{R}\times [0,\infty)\times M_0^+$ for which $\forall A\subset \mathbb{R}^{[0,\infty)}$ product measurable,
\begin{equation}
\mathbb{P}(S\in A)=\int_{\mathbb{R}\times [0,\infty)\times M_0^+} L_{\beta, \sigma^2, \nu}(A)d\Theta(\beta, \sigma^2, \nu).
\end{equation}

In any case, $\Theta$ is unique.
\end{lem}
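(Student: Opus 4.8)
The plan is to exploit the measurable isomorphisms already assembled in the preceding discussion, so that all three conditions collapse into one another by transport of structure together with a single Fubini interchange, while uniqueness comes from discrete-time De Finetti. First I would note that the two measure-mixture forms---the one over $\mathfrak{L}$ and the one over $\mathbb{R}\times[0,\infty)\times M_0^+$---are the same statement read through the measurable isomorphism $\Phi:\mathbb{R}\times[0,\infty)\times M_0^+\to\mathfrak{L}$ supplied by \eqref{Levy}, under which $L_{\beta,\sigma^2,\nu}=\Phi(\beta,\sigma^2,\nu)$. Pushing a candidate mixing measure forward by $\Phi$ (or pulling it back by $\Phi^{-1}$) turns $\int_{\mathfrak{L}}\gamma(A)\,d\Theta(\gamma)$ into $\int L_{\beta,\sigma^2,\nu}(A)\,d\Theta(\beta,\sigma^2,\nu)$ verbatim, so it suffices to prove equivalence of the original characteristic-function definition with the $\mathfrak{L}$-form.

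For that equivalence I would show that, for any probability measure $\Theta$ on $\mathfrak{L}$, the set function $\mu(A):=\int_{\mathfrak{L}}\gamma(A)\,d\Theta(\gamma)$ is a probability measure on the product $\sigma$-algebra of $\mathbb{R}^{[0,\infty)}$ whose finite-dimensional characteristic functions are exactly the right-hand side of \eqref{chf}. Indeed, for a cylinder functional one computes $\int e^{i\sum_j u_j x_{t_j}}\,d\mu(x)=\int_{\mathfrak{L}}\big(\int e^{i\sum_j u_j x_{t_j}}\,d\gamma(x)\big)\,d\Theta(\gamma)=\int_{\mathfrak{L}}\phi_{\beta,\sigma^2,\nu,t_1,\dots,t_n}(u_1,\dots,u_n)\,d\Theta$, the interchange being justified since the integrand is bounded by $1$. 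Because finite-dimensional characteristic functions determine finite-dimensional laws, which in turn determine any measure on the product $\sigma$-algebra, $\mu$ is the unique law with characteristic function \eqref{chf}. Read forward this says every $\mathfrak{L}$-mixture is a mixture of L\'evy processes; read backward, starting from the $\Theta$ guaranteed by the definition, it gives the converse.

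The step I expect to be the genuine technical obstacle is verifying that $\mu$ is well-defined for every product-measurable $A$, i.e. that $\gamma\mapsto\gamma(A)$ is $\Theta$-measurable. The $\sigma$-algebra on $\mathfrak{L}$ was declared only to make the marginal maps $\gamma\mapsto\gamma(\{x:x_{t_0}\in B\})$ measurable. To reach all product-measurable $A$ I would first confirm measurability on the generating algebra of finite-dimensional cylinder events---products of marginal events, which factor through the finitely many marginals because each $\gamma$ is an independent-increments law---and then promote this to the full product $\sigma$-algebra by a monotone-class argument, noting that $\{A:\gamma\mapsto\gamma(A)\text{ is measurable}\}$ is a $\lambda$-system containing the generating $\pi$-system. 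Countable additivity of $\mu$ then follows from monotone convergence inside the $\Theta$-integral.

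Finally, for uniqueness I would invoke De Finetti as flagged earlier in the section: the unit increments $(S_k-S_{k-1})_{k\ge1}$ form an exchangeable sequence whose directing random measure is the law of $Z_1$, namely the image of $\Theta$ under the measurable isomorphism $\mathfrak{L}\to\mathcal{I}$, $\gamma\mapsto X_1^*(\mathbb{P})$. By Remark \ref{corresp} the distribution of this directing measure is determined by the joint law of the increments, hence by the law of $S$; and since $\mathfrak{L}\cong\mathcal{I}$ is invertible, $\Theta$ is recovered uniquely. This pins down the mixing measure in both the $\mathfrak{L}$- and the parameter-space formulations simultaneously.
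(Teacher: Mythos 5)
Your proposal is correct and follows essentially the same route as the paper, which proves this lemma implicitly via the preceding observations: the measurable isomorphisms between $\mathbb{R}\times[0,\infty)\times M_0^+$, $\mathfrak{L}$, and $\mathcal{I}$ supplied by \eqref{Levy} and $\gamma\mapsto X_1^*(\mathbb{P})$, the determination of laws on the product $\sigma$-algebra by the finite-dimensional characteristic functions in \eqref{chf}, and uniqueness of $\Theta$ via discrete-time De Finetti applied to the unit increments. Your write-up merely makes explicit the details the paper leaves tacit (the Fubini interchange and the $\pi$-$\lambda$ argument for measurability of $\gamma\mapsto\gamma(A)$), and these are carried out correctly.
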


Thus, when speaking of $\Theta$ being a mixing measure or related topics, we will freely use these identifications.  For example, we will allow ourselves to say ``mixtures of Brownian motions''.  Also, we will no longer use the notation $\mathbb{R}\times [0,\infty)\times M_0^+$ and will use $\mathfrak{L}$ instead.  These identifications needed to be measurable in order for it to be possible to discuss mixtures using any of the descriptions, reconciling with the intuition that L\'evy processes are truly the same as their L\'evy Khintchine parameters and as infinitely divisible distributions.

The interested reader can combine what we have done so far with \cite{cts} to see that being a mixture of L\'evy processes is equivalent to being continuous in probability and satisfying a certain kind of exchangeable increments hypothesis.


Again, we will use a notion of goodness to abbreviate our discussion.

\begin{defn}
We will say that a measurable subset $L$ of $\mathfrak{L}$ is \textbf{good} if whenever $\Theta_1, \Theta_2$ are concentrated on $L$ and $\forall t\ge0,\, S_t=_dT_t$ we have $S=_dT$.
\end{defn}

\subsection{The Case of Brownian Motions}

Recall that a Brownian motion is a Gaussian L\'evy process, and can have drift and can proceed at any positive rate. (i.e. we only require that the variance at $t=1$ is positive.)  We denote the space of Brownian motions by $\text{BM}\subset \mathfrak{L}$.  BM corresponds to the requirement that the $\nu$ component of the L\'evy Khintchine formula is $0$.  We claim that

\begin{prop}

$\text{BM}$ is good.

\end{prop}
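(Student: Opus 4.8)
The plan is to reduce the statement to the hybrid transform inversion result, Lemma \ref{hybrid}. Restricting the L\'evy--Khintchine parametrization to the case $\nu=0$ identifies $\mathrm{BM}$ measurably with the half-plane $\mathbb{R}\times[0,\infty)$ of parameters $(\beta,\sigma^2)$, where $\beta$ is the drift and $\sigma^2$ the variance rate. Under this identification the mixing measures $\Theta_1,\Theta_2$ become probability measures on $\mathbb{R}\times[0,\infty)$, and since the mixing measure determines the joint law of the process, it suffices to prove $\Theta_1=\Theta_2$.

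First I would compute the one-dimensional marginals. Conditionally on $(\beta,\sigma^2)$ the value $S_t$ is $\mathcal{N}(\beta t,\sigma^2 t)$, so integrating against $\Theta_1$ gives
\[
\mathbb{E}[e^{iuS_t}]=\int_{\mathbb{R}\times[0,\infty)} e^{\,iu\beta t-\tfrac{1}{2}u^2\sigma^2 t}\,d\Theta_1(\beta,\sigma^2),
\]
and likewise for $T_t$ with $\Theta_2$. Thus the hypothesis $S_t=_dT_t$ for all $t\ge0$ is exactly the assertion that $\int e^{\,iu\beta t-\tfrac{1}{2}u^2\sigma^2 t}\,d\Theta_1=\int e^{\,iu\beta t-\tfrac{1}{2}u^2\sigma^2 t}\,d\Theta_2$ for every $u\in\mathbb{R}$ and every $t\ge0$.

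The crux is a change of variables. Writing $a=ut$ and $b=\tfrac12 u^2 t$, the exponent becomes $ia\beta-b\sigma^2$, which matches the integrand of Lemma \ref{hybrid} with $x=\beta$, $y=\sigma^2$. As $(u,t)$ ranges over $\mathbb{R}\times(0,\infty)$ the pair $(a,b)$ covers $\{(a,b):a\neq0,\ b>0\}$: given such $a,b$ one solves $u=2b/a$ and $t=a^2/(2b)>0$. This set is dense in $\mathbb{R}\times[0,\infty)$, and both transforms $(a,b)\mapsto\int e^{\,ia\beta-b\sigma^2}\,d\Theta_j$ are continuous there (the integrands being bounded by $1$), so the equality on the dense set extends by continuity to all of $\mathbb{R}\times[0,\infty)$. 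This is precisely where the continuum of observation times enters: observing only at $t\in\mathbb{N}$ would confine $b/a^2=1/(2t)$ to the discrete set $\{1/(2n):n\in\mathbb{N}\}$, and no such density argument would be available, which is the informal sense in which the continuum ``destroys the arithmetic structure''.

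With the two transforms of $\Theta_1$ and $\Theta_2$ now known to agree on all of $\mathbb{R}\times[0,\infty)$, Lemma \ref{hybrid} yields $\Theta_1=\Theta_2$, and hence $S=_dT$. Since every step after the change of variables is an application of machinery already in place, the only real content---and the step I expect to require the most care---is verifying that the reparametrization $(u,t)\mapsto(ut,\tfrac12 u^2t)$ sweeps out a dense enough subset of the parameter domain to license the continuity extension and invoke the inversion lemma.
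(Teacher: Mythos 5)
Your proof is correct and takes essentially the same route as the paper: reduce the hypothesis, via one-dimensional characteristic functions, to equality of the mixed Fourier--Laplace transforms of $\Theta_1,\Theta_2$ on $\mathbb{R}\times[0,\infty)$, then invoke Lemma \ref{hybrid} and the fact that the mixing measure determines the joint law. If anything you are more careful than the paper's two-line proof, which applies Lemma \ref{hybrid} without remarking that the reparametrization $(u,t)\mapsto\bigl(ut,\tfrac12 u^2 t\bigr)$ misses the rays $\{a=0,\,b>0\}$ and $\{a\neq0,\,b=0\}$; your density-plus-dominated-convergence extension supplies exactly the step needed to close that gap.
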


\begin{proof}
Using \eqref{chf} for one value of $t$ at a time, we have $\forall u\in \mathbb{R}, t\ge0$

\begin{align*}\int_{\mathbb{R}\times[0,\infty)\times\{0\}}\exp\{iut\beta-tu^2\sigma^2/2\}d\Theta_1(\beta, \sigma, 0)=\\\int_{\mathbb{R}\times[0,\infty)\times\{0\}}\exp\{iut\beta-tu^2\sigma^2/2\}d\Theta_2(\beta, \sigma, 0).\numberthis\end{align*}
Lemma \ref{hybrid} now finishes the proof.

\end{proof}

Notice that in the discrete time normal case, we had the last equation only for $t=1/n$ but now we have it for all $t\ge0$, which is important in eliminating the need for assumptions about convergence of MGFs.  Because the discrete set of rationally related numbers $1/n$, arising via application of $r\mapsto 1/r$ to $\mathbb{N}$ in the proof of Proposition \ref{normal}, is replaced with a continuum in the above proof, this can be thought of as a destruction of the arithmetic structure.  As promised, this is a case in which passage to the continuous time problem implies not only the additional information of infinite divisibility, but crucially the observations at a continuum of times rather than only a discrete set.

\subsection{A Poisson-Flavored Case}

All functions of $u$ of the form

\begin{equation}\exp\left\{\int_{\mathbb{R}}(e^{iux}-1)d\mu(x)\right\}\label{special}\end{equation}
are characteristic functions of infinitely divisible distributions, as long as $\mu$ is a finite nonnegative Borel measure.  This can be seen by taking a vague limit of sums of independent Poisson Processes with various rates and jump sizes.


Call $\text{LISPP}$ the subset of $\mathfrak{L}$ determined by \eqref{special}. (Here $\text{LISPP}$ stands for ``limits of independent sums of Poisson Processes.'')  We may think of the elements of $\text{LISPP}$ as ``independent integrals'' of Poisson Processes, which is a different notion than a mixture of Poisson Processes and also different from compound Poisson processes.  By a calculation, we have

\begin{lem}
$\text{LISPP}$ is measurable in $\mathfrak{L}$ because it is actually determined by the conditions $\int_{\mathbb{R}}\frac{1+x^2}{x^2}d\nu(x)<\infty, \beta=\int_{\mathbb{R}} \frac{1}{x} d\nu(x)$.
\end{lem}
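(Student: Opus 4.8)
The plan is to move the problem onto the parameter space $\mathbb{R}\times[0,\infty)\times M_0^+$, which we have already identified with $\mathfrak{L}$ as a measurable isomorphism through \eqref{Levy}. Under this identification I want to show that $\text{LISPP}$ is exactly the set of triples $(\beta,\sigma^2,\nu)$ for which $\sigma^2=0$, $\int_{\mathbb{R}}\frac{1+x^2}{x^2}\,d\nu(x)<\infty$, and $\beta=\int_{\mathbb{R}}\frac1x\,d\nu(x)$. (The condition $\sigma^2=0$ is implicit in the statement and is in any case forced: the exponent in \eqref{special} stays bounded as $|u|\to\infty$, whereas a nonzero Gaussian term would drive the real part of the exponent to $-\infty$.) Once this parameter description is in hand, measurability of $\text{LISPP}$ reduces to measurability of the describing conditions.

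For the algebraic correspondence I would start from a characteristic function of the form \eqref{special} with $\mu$ a finite nonnegative measure and rewrite its exponent (at $t=1$) in the canonical form of \eqref{Levy}. Setting $d\nu=\frac{x^2}{1+x^2}\,d\mu$, which is a finite measure automatically having no atom at $0$ (so $d\mu=\frac{1+x^2}{x^2}\,d\nu$), and splitting
\[
\int_{\mathbb{R}}(e^{iux}-1)\,d\mu=\int_{\mathbb{R}}\Big(e^{iux}-1-\tfrac{iux}{1+x^2}\Big)\tfrac{1+x^2}{x^2}\,d\nu+iu\int_{\mathbb{R}}\tfrac{x}{1+x^2}\,d\mu,
\]
I read off $\sigma^2=0$, the claimed Lévy measure $\nu$, and $\beta=\int\frac{x}{1+x^2}\,d\mu=\int\frac1x\,d\nu$; moreover $\int\frac{1+x^2}{x^2}\,d\nu=\mu(\mathbb{R})<\infty$. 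The reverse implication runs this computation backwards: given $\sigma^2=0$ and the two integral conditions, put $\mu=\frac{1+x^2}{x^2}\,\nu$, note it is finite, and observe that the hypothesis $\beta=\int\frac1x\,d\nu$ is precisely what cancels the $\frac{iux}{1+x^2}$ compensator, so that \eqref{Levy} collapses to \eqref{special}. I would also record here that on $\{\int\frac{1+x^2}{x^2}\,d\nu<\infty\}$ the integral $\int\frac1x\,d\nu$ converges absolutely, since $\frac{1}{|x|}\le\frac{1+x^2}{x^2}$ for every $x\neq0$; this is what makes the second condition well posed.

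For measurability I would use that for any fixed nonnegative Borel function $f$ the map $\nu\mapsto\int f\,d\nu\in[0,\infty]$ is measurable with respect to the evaluation $\sigma$-algebra on $M_0^+$: approximate $f$ from below by simple functions, for which $\nu\mapsto\int(\text{simple})\,d\nu$ is a finite linear combination of the generators $\nu\mapsto\nu(B)$, and pass to the monotone limit. Applying this to $f=\frac{1+x^2}{x^2}$ shows $\{\int\frac{1+x^2}{x^2}\,d\nu<\infty\}$ is measurable; applying it to the positive and negative parts of $\frac1x$ shows $\nu\mapsto\int\frac1x\,d\nu$ is measurable on that set, so $\{\beta=\int\frac1x\,d\nu\}$ is the preimage of $\{0\}$ under a measurable map, hence measurable, as is $\{\sigma^2=0\}$. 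The intersection of these three sets is the parameter description of $\text{LISPP}$, whence $\text{LISPP}$ is measurable.

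The routine analytic identities above are not the real difficulty; the main obstacle I expect is the bookkeeping around the unbounded integrands near $x=0$. Specifically, one must verify that $\int\frac1x\,d\nu$ is genuinely well defined (this is why the bound $\frac1{|x|}\le\frac{1+x^2}{x^2}$ is needed, and why the second condition is only meaningful on the set where the first holds) and that the integral functionals remain measurable even though $\frac{1+x^2}{x^2}$ and $\frac1x$ blow up at the origin — both handled by the monotone simple-function approximation, which is insensitive to the blow-up since $\nu(\{0\})=0$.
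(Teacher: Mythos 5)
Your proposal is correct and takes essentially the approach the paper intends: the paper offers no written proof beyond the phrase ``by a calculation,'' and your change of measure $d\nu=\frac{x^2}{1+x^2}\,d\mu$ together with the compensator splitting is precisely that calculation, with the monotone simple-function approximation supplying the routine measurability of $\nu\mapsto\int f\,d\nu$ on $M_0^+$ and the bound $\frac{1}{|x|}\le\frac{1+x^2}{x^2}$ making the condition $\beta=\int\frac{1}{x}\,d\nu$ well posed. Your observation that $\sigma^2=0$ is an implicit part of the description --- and is forced, since the real part of the exponent in \eqref{special} is bounded while a nonzero Gaussian term would send it to $-\infty$ --- correctly repairs an omission in the lemma's statement.
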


It follows from the description of $\text{LISPP}$ above that $\mu$ is uniquely determined by the infinitely divisible distribution.  Moreover, 

\begin{equation}(X_t)_{t\ge0}^*(\mathbb{P})\in \text{LISPP} \mapsto \mu \in M_+(\mathbb{R})\end{equation}
is a measurable isomorphism, which is defined on $\text{LISPP}$.  Therefore, we may identify each element of $\text{LISPP}$ with a nonnegative finite measure on $\mathbb{R}$ via this correspondence.  Also, if $\text{LISPP}^+$ is the subset of $\text{LISPP}$ corresponding to $\mu$ supported in $[0,\infty)$ (i.e. we only allow positive jump size Poisson Processes to enter the independent integral), then $\text{LISPP}^+$ is of course measurable in $\text{LISPP}$.

Since \eqref{special} specifies the distribution of a nonnegative infinitely divisible distribution for elements of $\text{LISPP}^+$, the Laplace transform can be calculated by analytic continuation: $\forall \mu$ finite Borel measure on $[0,\infty)$, we have the function

\begin{equation}\exp\left\{\int_{[0,\infty)}(e^{-sx}-1)d\mu(x)\right\}\end{equation}
of $s$ is the Laplace transform of a member of $\text{LISPP}^+$, and moreover these are the only Laplace transforms of members of $\text{LISPP}^+$.

We will also refer to $\text{LISPP}_1, \text{LISPP}_1^+$ to denote the requirement that $\mu$ be a probability measure.  These are also measurable subsets of $\text{LISPP}$.

If $\Theta$ is concentrated on $\text{LISPP}, \text{LISPP}^+, \text{LISPP}_1$ or $\text{LISPP}_1^+$, then $\mu$ can be regarded as a random measure defined on $\text{LISPP}, \text{LISPP}^+, \text{LISPP}_1$ or $\text{LISPP}_1^+$.  We now show how the last section on the discrete problem can be embedded into the current problem.

\begin{lem}
Let $L$ be a measurable subset of $\text{LISPP}_1^+$.  Then $L$ is good if and only if $\forall \Theta_1, \Theta_2$ concentrated on $L$, we have

\begin{equation}\forall s\ge0,\, \mathcal{L}_{\mu_1}(s)=_d\mathcal{L}_{\mu_2}(s)\end{equation}
implies

\begin{equation}\Theta_1=_d\Theta_2.\end{equation}
Here, we regard $\mu \mapsto \mu$ as a mapp from $\text{LISPP}_1^+$ to $P([0,\infty))$, and we regard $\Theta_1, \Theta_2$ as giving the structure of a probability space to $\text{LISPP}_1^+$ in two different ways, so we require that $\mu_1, \mu_2$ are random probability measures with the same distribution as $\mu$ under $\Theta_1$ and $\Theta_2$ respectively.\label{newequiv}
\end{lem}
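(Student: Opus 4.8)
The plan is to follow the template of Lemma \ref{equiv}, adapting it to the continuous-time LISPP setting with two modifications: the Laplace transform of $S_t$ must be routed through the random LISPP measure $\mu$ rather than a directing measure, and the discrete bounded moment problem (Lemma \ref{arbmoment}) over integer powers gets replaced by an analyticity argument in the continuous variable $t$. Since both sides of the asserted equivalence are implications universally quantified over pairs $\Theta_1,\Theta_2$ concentrated on $L$, it suffices to show that the two hypotheses are equivalent and the two conclusions are equivalent; I read the stated conclusion $\Theta_1=_d\Theta_2$ as $\Theta_1=\Theta_2$.

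First I would record the conditional Laplace transform. Since $L\subset\text{LISPP}_1^+$, conditioning on the random measure $\mu_1$ (distributed as $\mu$ under $\Theta_1$) makes $S$ the LISPP process attached to $\mu_1$, which is nonnegative, so $\mathbb{E}[e^{-sS_t}\mid\mu_1]=\exp\{t\int_{[0,\infty)}(e^{-sx}-1)d\mu_1(x)\}=\exp\{t(\mathcal{L}_{\mu_1}(s)-1)\}$, the last step using that $\mu_1$ is a probability measure. Because $S_t$ is nonnegative, $S_t=_dT_t$ is equivalent to $\mathbb{E}[e^{-sS_t}]=\mathbb{E}[e^{-sT_t}]$ for all $s\ge0$, and integrating the conditional transform against $\Theta_1$, respectively $\Theta_2$ (a Fubini interchange), rewrites this as: for all $s,t\ge0$, $\mathbb{E}[\exp\{t(\mathcal{L}_{\mu_1}(s)-1)\}]=\mathbb{E}[\exp\{t(\mathcal{L}_{\mu_2}(s)-1)\}]$.

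Next I would exploit boundedness. For each fixed $s\ge0$ the random variable $R_i:=\mathcal{L}_{\mu_i}(s)-1$ takes values in $[-1,0]$, since $\mathcal{L}_{\mu_i}(s)\in(0,1]$ for a probability measure on $[0,\infty)$ and $s\ge0$. Thus $t\mapsto\mathbb{E}[e^{tR_i}]$ is the entire moment generating function of a bounded random variable, and the displayed identity asserts these MGFs agree for all $t\ge0$. By the identity theorem they then agree for all complex $t$, so $R_1$ and $R_2$ have equal moments of every order; boundedness makes the moment problem determinate, giving $R_1=_dR_2$, i.e. $\mathcal{L}_{\mu_1}(s)=_d\mathcal{L}_{\mu_2}(s)$. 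This chain reverses (equality in law of the Laplace transforms at a fixed $s$ forces equality of the MGFs in $t$), so for each $s$ the conditions are equivalent; ranging over $s$ shows $\forall t\ge0,\,S_t=_dT_t$ holds if and only if $\forall s\ge0,\,\mathcal{L}_{\mu_1}(s)=_d\mathcal{L}_{\mu_2}(s)$. This MGF-analyticity argument is the continuous analog of the bounded moment problem used in Lemma \ref{equiv}, and I expect the verification that $\mathcal{L}_{\mu_i}(s)-1$ is bounded — together with the Fubini justification of the previous paragraph — to be the genuine crux; everything else is bookkeeping.

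Finally I would assemble the equivalence of conclusions. Goodness of $L$ says that $\forall t\ge0,\,S_t=_dT_t$ forces $S=_dT$; since the mixing measure is uniquely determined by the law of the mixture, $S=_dT$ is the same as $\Theta_1=\Theta_2$ (equivalently, via the measurable isomorphism $\text{LISPP}_1^+\cong P([0,\infty))$, the same as $\mu_1=_d\mu_2$). Substituting the hypothesis equivalence from the previous step, goodness of $L$ becomes precisely the statement that $\forall s\ge0,\,\mathcal{L}_{\mu_1}(s)=_d\mathcal{L}_{\mu_2}(s)$ implies $\Theta_1=\Theta_2$, which is the claimed criterion.
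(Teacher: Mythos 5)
Your proof is correct and follows essentially the same route as the paper: both reduce $\forall t\ge0,\, S_t=_dT_t$ to the identity $\mathbb{E}[\exp\{t(\mathcal{L}_{\mu_1}(s)-1)\}]=\mathbb{E}[\exp\{t(\mathcal{L}_{\mu_2}(s)-1)\}]$ for all $s,t\ge0$, upgrade this at each fixed $s$ to $\mathcal{L}_{\mu_1}(s)=_d\mathcal{L}_{\mu_2}(s)$ using boundedness, and finish via uniqueness of the mixing measure. The only (inessential) difference is the upgrading mechanism: the paper substitutes $t\mapsto nt$, applies the bounded moment problem (Lemma \ref{arbmoment}) to the bounded positive random variable $\exp\{t(\mathcal{L}_{\mu_i}(s)-1)\}$, and then cancels the real exponential and the factor $t$, whereas you run the identity theorem in $t$ on the entire MGF of the bounded variable $\mathcal{L}_{\mu_i}(s)-1$ and invoke determinacy of the bounded moment problem — both hinge on exactly the same boundedness observation.
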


Therefore, even though there is no ideological connection between the mixing measure of the last section and the measures $\mu$ from this section which are similar to the L\'evy Khintchine measure, at the level of the mathematical formalisms the problems are related.

\begin{proof}

Observe first that if $S, T$ are mixtures from $L$, then $\forall t \ge0,\, S_t=_dT_t$ if and only if $\forall t\ge0, s\ge0, n\ge0$ we have

\begin{equation}\mathbb{E}[\exp\{nt\int_{[0,\infty)}(e^{-sx}-1)d\mu_1(x)\}]=\mathbb{E}[\exp\{nt\int_{[0,\infty)}(e^{-sx}-1)d\mu_2(x)\}]\label{PL}\end{equation}
by using \eqref{chf} for one value of $t$ at a time.  Then we know that \eqref{PL} is equivalent to $\forall t\ge0, s\ge0, n\ge0$

\begin{equation}\mathbb{E}[\exp\{t\int_{[0,\infty)}(e^{-sx}-1)d\mu_1(x)\}^n]=\mathbb{E}[\exp\{t\int_{[0,\infty)}(e^{-sx}-1)d\mu_2(x)\}^n]\end{equation}
which, by the bounded moment problem is equivalent to $\forall t\ge0, s\ge0$

\begin{equation}\exp\{t\int_{[0,\infty)}(e^{-sx}-1)d\mu_1(x)\}=_d\exp\{t\int_{[0,\infty)}(e^{-sx}-1)d\mu_2(x)\}.\end{equation}
But this last statement is equivalent to $\forall t\ge0, s\ge0$

\begin{equation}t\int_{[0,\infty)}(e^{-sx}-1)d\mu_1(x)=_dt\int_{[0,\infty)}(e^{-sx}-1)d\mu_2(x)\end{equation}
which is the same as $\forall s\ge0$

\begin{equation}\int_{[0,\infty)}(e^{-sx}-1)d\mu_1(x)=_d\int_{[0,\infty)}(e^{-sx}-1)d\mu_2(x)\end{equation}
and therefore also the same as $\forall s\ge0$

\begin{equation}\int_{[0,\infty)}e^{-sx}d\mu_1(x)=_d\int_{[0,\infty)}e^{-sx}d\mu_2(x)\end{equation}
because $\mu_1, \mu_2$ are always probability measures.


We have that $S=_dT$ if and only if $\Theta_1=\Theta_2$. 

\end{proof}

\begin{rem}

Notice how it did not matter that we made a continuum of observations because the nonnegativity assumption $\text{LISPP}^+$ allowed us to use the Laplace transform.  Since, when restricted to real arguments, exponentiation is invertible, we were able to cancel an exponentiation and then cancel the $t$.  Therefore, a measurable subset $\text{LISPP}_1^+$, when viewed as a subset of $\mathcal{I}$, is good if and only if it is good as in the last section.  That is, to tell apart two mixtures of $\text{LISPP}_1^+$s, we only need to observe at natural number times.  This manipulation was not available in the $\text{BM}$ case because there we were dealing with complex exponentiation, which also forbids the use of the bounded moment problem above.

\end{rem}

For the next result, $\forall A \subset P(\mathbb{R})$ measurable, we use 
\begin{equation}\text{LISPP}(A):=\{(X_t)_{t\ge0}^*(\mathbb{P}) \in \text{LISPP}|\mu\in A\}\end{equation}
where $\mu$ is the measure in \eqref{special} giving the characteristic function of $X_1$.  For example, $\text{LISPP}F_{\{0, 1, 2, 3\}}$ denotes the collection of L\'evy processes that can be written as an independent sum of of a Poisson Process of rate $0$ and jump size $x_0$, one of rate $1$ and jump size $x_1$, one of rate $2$ and jump size $x_2$, and one of rate $3$ and jump size $x_3$ such that $(x_0, x_1, x_2, x_3)\in T_4$.  Since we will always use $A$ such that all measures in $A$ are supported in $[0,\infty)$, our $\text{LISPP}(A)$ will always be a subset of $\text{LISPP}_1^+$ so we will be able to use the above lemma.

The fact that the continuum of observations does not destroy any arithmetic structure suggests that the situation with $\text{LISPP}_1^+$ will be more nuanced than the situation with $\text{BM}$, where uniqueness of $S=(S_t)_{t\ge0}$ held without further conditions.  Indeed, we now know that the variety of possibilities of goodness is at least as much as that of the discrete time problem:

\begin{thm}
Let $A\subset P^+$ be measurable, and consist only of compactly supported measures.  Then $A$ is good (in the only sense that is available, i.e. from the discrete time problem) iff $\text{LISPP}(A)$ is good (in either equivalently the sense of the present section or the last when $\text{LISPP}(A)$ is viewed as a subset of $\mathcal{I}\subset P(\mathbb{R})$).\label{bridge}
\end{thm}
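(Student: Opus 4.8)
The plan is to show that the hypothesis ``$A$ good'' and the conclusion ``$\text{LISPP}(A)$ good'' are each equivalent, through the two equivalence lemmas already in hand, to one and the same statement about random $A$-valued probability measures; once that is exposed the theorem is immediate. The organizing observation is that the assignment $\mu \mapsto \mu$ is a measurable isomorphism of $\text{LISPP}_1^+$ with $P^+$ (this is precisely the identification set up just before Lemma \ref{newequiv}), and it carries $\text{LISPP}(A)$ onto $A$. Under this isomorphism a mixing measure $\Theta$ concentrated on $\text{LISPP}(A)$ is literally the same datum as the law of an $A$-valued random probability measure, exactly as, in the discrete setting, a mixing measure concentrated on $A$ is the law of an a.s.\ $A$-valued directing measure.

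First I would record that $A \subset P^+$ together with the compact-support hypothesis guarantees $\text{LISPP}(A) \subset \text{LISPP}_1^+$, so that Lemma \ref{newequiv} applies and the measurable identification of $\text{LISPP}(A)$ with $A$ is legitimate. Granting this, Lemma \ref{newequiv} says $\text{LISPP}(A)$ is good precisely when, for every pair of $A$-valued random probability measures $\mu_1,\mu_2$, the hypothesis $\forall s \ge 0,\ \mathcal{L}_{\mu_1}(s) =_d \mathcal{L}_{\mu_2}(s)$ forces $\mathrm{law}(\mu_1)=\mathrm{law}(\mu_2)$. Next I would invoke Lemma \ref{equiv}, which characterizes discrete goodness of $A$ by the verbatim same sentence: $A$ is good iff for every pair of a.s.\ $A$-valued directing measures $\alpha_1,\alpha_2$, the hypothesis $\forall s \ge 0,\ \mathcal{L}_{\alpha_1}(s) =_d \mathcal{L}_{\alpha_2}(s)$ forces $\Theta_1 = \Theta_2$, i.e.\ $\mathrm{law}(\alpha_1)=\mathrm{law}(\alpha_2)$. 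Since both quantifiers range over all laws on $A$ and the demanded implication is identical --- the random Laplace transform, marginally at each $s$, determines the law of the measure --- the two criteria coincide, whence $A$ is good iff $\text{LISPP}(A)$ is good.

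The only genuine work, and the point I would be most careful about, is justifying that the two families of random measures being quantified over really are the same family, i.e.\ that the isomorphism $\text{LISPP}_1^+ \cong P^+$ is measurable and restricts to a measurable isomorphism of $\text{LISPP}(A)$ with $A$; this is where the standing measurable structure on $\mathfrak{L}$ and the compact-support assumption enter, the latter ensuring the transform manipulations of Lemma \ref{newequiv} remain valid on $A$. Once this bookkeeping is settled there is no analytic content left to supply: all of the substance has already been absorbed into Lemmas \ref{equiv} and \ref{newequiv}, and Theorem \ref{bridge} is their formal comparison. I would then remark that the parenthetical equivalence in the statement --- goodness of $\text{LISPP}(A)$ in the sense of this section versus its goodness when viewed inside $\mathcal{I} \subset P(\mathbb{R})$ --- is not something to establish here, as it was already proved in the remark following Lemma \ref{newequiv}.
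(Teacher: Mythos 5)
Your proposal is correct and is essentially the paper's own argument: the paper disposes of Theorem \ref{bridge} in one line as ``a consequence of Lemma \ref{newequiv}'', which implicitly means precisely the comparison you spell out, namely matching the Laplace-transform criterion of Lemma \ref{newequiv} against the verbatim-identical criterion of Lemma \ref{equiv} under the measurable identification $\mu \mapsto \mu$ of $\text{LISPP}(A)$ with $A$. The bookkeeping you flag as the only real work (measurability of the identification, and that every law on $A$ is realized by a directing measure, which the paper justified in the proof of Theorem \ref{0123}) is exactly what the paper's one-line proof leaves implicit.
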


\begin{proof}
This is a consequence of Lemma \ref{newequiv}
\end{proof}

\begin{rem}

The upshot of this theorem is that any set of probability measures that is good when in the role of the allowed components of the mixture are also good when in the role of $\mu$.  One could iterate this.  If $A$ is good, then $\text{LISPP}(A)$ is good, then $\text{LISPP}(\text{LISPP}(A))$ is good and so on.  After all, thanks to the fact that knowing each $S_n$ is enough, as long as we only concern ourselves with mixtures of L\'evy processes, there is no difference between the continuous time and discrete time problems.
\end{rem}

\begin{cor}
Let $\nu_1, \nu_2,\nu_3$ be probability measures on $[0,\infty)$. Let $C$ be the convex hull of $\{\nu_1, \nu_2, \nu_3\}$.  Then $\text{LISPP}(C)$ is good.  Also, if $A$ is a discrete, countable set of real numbers that is linearly independent over $\mathbb{Q}$ then $\text{LISPP}(F_A)$ is good.  $\text{LISPP}(F_{\{0,1,2,3\}})$ is not good. 
\end{cor}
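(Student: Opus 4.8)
The plan is to recognize all three assertions as immediate transfers of discrete-time results through the equivalence between the discrete and continuous notions of goodness. Since the map sending a member of $\text{LISPP}_1^+$ to the probability measure $\mu$ of \eqref{special} is a measurable isomorphism onto $P([0,\infty))$, and since $C$ (the convex hull of $\{\nu_1,\nu_2,\nu_3\}$), $F_A$, and $F_{\{0,1,2,3\}}$ are each measurable subsets of $P^+=P([0,\infty))$, their preimages $\text{LISPP}(C)$, $\text{LISPP}(F_A)$, $\text{LISPP}(F_{\{0,1,2,3\}})$ are measurable subsets of $\text{LISPP}_1^+$. I would then invoke Lemma \ref{newequiv} (equivalently Theorem \ref{bridge}), which says that such a set is good in the continuous-time sense exactly when the corresponding subset of $P^+$ is good in the discrete-time sense of Lemma \ref{equiv}. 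With this equivalence in hand, each claim reduces to a discrete-time fact already proved.

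For the non-goodness assertion, $F_{\{0,1,2,3\}}$ consists of measures supported in the compact set $\{0,1,2,3\}$, so Theorem \ref{bridge} applies verbatim: $F_{\{0,1,2,3\}}$ fails to be good by Theorem \ref{0123}, hence $\text{LISPP}(F_{\{0,1,2,3\}})$ is not good. For the two positive assertions, $C$ is good in the discrete-time sense by Proposition \ref{3 value}, whose hypothesis is precisely that one takes convex combinations of three members $\nu_1,\nu_2,\nu_3\in P^+$; and $F_A$ is good in the discrete-time sense by Theorem \ref{arithmetic independence}, using that $A$ is discrete with no accumulation points and $\mathbb{Q}$-linearly independent (the $N=\infty$ case being covered there). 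Transferring each through the equivalence yields goodness of $\text{LISPP}(C)$ and of $\text{LISPP}(F_A)$.

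The one point needing care, and the only place the argument is not a pure citation, is that neither $\nu_1,\nu_2,\nu_3$ nor the measures in $F_A$ (for infinite $A$) are assumed compactly supported, so the compact-support hypothesis of Theorem \ref{bridge} is not literally met in the two positive cases. I would resolve this by appealing directly to Lemma \ref{newequiv}, whose proof does not actually use compact support: the random variable fed to the bounded moment problem is $\exp\{t\int_{[0,\infty)}(e^{-sx}-1)\,d\mu(x)\}$, and since $e^{-sx}-1\in[-1,0]$ for $x,s\ge0$ while $\mu$ is a probability measure, this quantity lies in $[e^{-t},1]$ and is bounded irrespective of the support of $\mu$. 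Hence the chain of equivalences in Lemma \ref{newequiv} runs for every measurable $L\subset\text{LISPP}_1^+$, establishing the discrete/continuous equivalence of goodness in full generality. Granting this observation, the corollary follows with no further computation: its entire content resides in Proposition \ref{3 value}, Theorem \ref{arithmetic independence}, and Theorem \ref{0123}, and the present statement merely packages them through the bridge.
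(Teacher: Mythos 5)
Your proof is correct and follows essentially the same route as the paper, which obtains the corollary by transferring Proposition \ref{3 value}, Theorem \ref{arithmetic independence}, and Theorem \ref{0123} through the discrete/continuous equivalence of Theorem \ref{bridge} (itself a consequence of Lemma \ref{newequiv} combined with Lemma \ref{equiv}). Your one refinement is genuine and worth keeping: as stated, Theorem \ref{bridge} assumes the measures in $A$ are compactly supported, a hypothesis not met by general $\nu_1, \nu_2, \nu_3$ or by $F_A$ for infinite discrete $A$, and you correctly observe that Lemma \ref{newequiv} carries no such restriction --- the quantity $\exp\{t\int_{[0,\infty)}(e^{-sx}-1)\,d\mu(x)\}$ lies in $[e^{-t},1]$ for any probability measure $\mu$ on $[0,\infty)$, so the bounded moment problem step works irrespective of support --- which is exactly the repair needed to make the citation of the bridge legitimate in the two positive cases, a point the paper itself glosses over.
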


\begin{rem}

Of course, most of the other results of the last section could be generalized just as easily, but they are not as meaningful as the ones listed above in the context of mixtures of L\'evy processes.

\end{rem}

\label{poiss}

Notice the comparison with Remark \ref{BGEP1}, which is a reasonable comparison because we already saw that in this case knowing all $S_t$ is no different than knowing only the $S_n$.  In Remark \ref{BGEP1} we were concerned with mixtures of Poissons (with jump size $1$ and rate $\lambda$) and we were mixing over different rates.  There, uniqueness of $S_n$ was true, which can also be deduced from our present machinery by applying Theorem \ref{bridge} to the  good set $A=\{\delta_x|x\in[0,\infty)\}.$ 

In general, uniqueness fails in the present setting for even jump sizes restricted to $\{0, 1, 2, 3\}$, because we allow the rates to vary as long as they add up to $1$.  Both this and Remark \ref{BGEP1} are different than the situation in Proposition \ref{Poisson}, where the rate was fixed, but the jump size was allowed to be $0, 1, 2,$ or $3$ and we were allowed to take convex combinations of these distributions before mixing them.

Another different situation occured in the context of Proposition \ref{g4} applied to Poisson distributions, where the jump size was fixed at $1$, the rate was fixed at $\lambda$, but we allowed independent sums and convex combinations to come in before we take the mixture.  

These show that goodness is not really a property of a type of distribution only, but also the specifics of how the class is assembled.

\section{A Class of Uniqueness Problems}

From now on we will use 
\begin{equation}\forall s\in \mathbb{R},\, \mathcal{\mathcal{M}}_\mu(s) =\int_{\mathbb{R}} e^{sx}d\mu(x)\end{equation} 
to mean the MGF (at $s\in \mathbb{R}$) of a probability measure $\mu$ on $\mathbb{R}$.  Here, $\mu$ may be random or deterministic.  We will often make assumptions about finiteness of the MGF, which we will state as needed. We also use
\begin{equation}\forall t\in \mathbb{R},\, \mathcal{\phi}_\mu(t) =\int_{\mathbb{R}} e^{itx}d\mu(x)\end{equation} 
to mean the characteristic function (at $t\in \mathbb{R}$) of a probability measure $\mu$ on $\mathbb{R}$.  Here, $\mu$ may be random or deterministic.  

In this section, we will discuss a variety of uniqueness problems regarding determining the distribution of a random probability measure from limited information.  These problems will all run parallel to classical versions of various uniqueness results.  
\begin{defn}
Given a probability measure $\mu$ defined on $\mathbb{R}$, let $\mu_k:=\int_{\mathbb{R}}x^k d\mu(x)$ denote the moment of order $k$ of $\mu$, when it exists.  Define $\mathfrak{C}$ to be the collection of $\mu \in P(\mathbb{R})$ that obey the Carleman condition that all the moments exist, are finite, and $\sum_{j=0}^\infty 1/\mu_{2j}^{1/2j}=\infty$ with the convention that $1/0=\infty$.  Let $M_{<\infty}$ denote the collection of $\mu \in P(\mathbb{R})$ with finite MGF in some neighborhood of $0$.\label{classes}\end{defn}
Sometimes $\mu$ will denote a random measure.

We will sometimes make the assumption that the random measure $\mu$ is uniformly bounded a priori. This means there exists $M>0$ such that $\mu$ is a.s. supported in $[-M, M]$.

We will use the notation $\mu_k:=\int_{\mathbb{R}}x^k d\mu(x)$ to indicate the (random) moment of order $k$ for $\mu$, wherever this is defined.

For any of the classes in Definition \ref{classes} or for $P^+$, we will say $\mu$ is a member of that class if this holds a.s.  Observe that all of these conditions are measurable.  Notice this implies no uniformity, for instance each sample from $\mu\in M_{<\infty}$ may correspond to a different open interval about which the MGF is finite.

For now, let us assume that $\mu$ is deterministic and state the classical uniqueness results for comparison:

\begin{prop}
Let $\mu, \nu$ be deterministic probability measures on $\mathbb{R}$.  Then $\mu=\nu$ provided any of the following hold:
\begin{enumerate}
\item $\phi_\mu=\phi_\nu$

\item $\mu, \nu\in P^+$ and $\mathcal{L}_\mu=\mathcal{L}_\nu$.

\item $\mu, \nu\in M_{<\infty}$ and $\mathcal{M}_\mu=\mathcal{M}_\nu$.

\item $\mu, \nu\in\mathfrak{C}$ and $\forall k \ge0,\, \mu_k=\nu_k$\end{enumerate}
\end{prop}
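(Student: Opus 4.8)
The plan is to prove the four parts in an order that exposes their dependencies rather than attacking each from scratch: I would make part (1) the base case and reduce parts (2), (3), (4) either to it or to moment determinacy. For part (1), I would prove the Fourier uniqueness theorem directly by Gaussian regularization. Convolving $\mu$ with a centered Gaussian $g_\epsilon$ of variance $\epsilon$ produces a measure with a bounded continuous density whose characteristic function is the integrable function $\phi_\mu(t)e^{-\epsilon t^2/2}$; ordinary Fourier inversion then recovers $\mu*g_\epsilon$ from $\phi_\mu$ alone, and letting $\epsilon\downarrow 0$ gives $\mu*g_\epsilon\to\mu$ weakly. Hence $\phi_\mu=\phi_\nu$ forces $\mu*g_\epsilon=\nu*g_\epsilon$ for every $\epsilon$ and therefore $\mu=\nu$.

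For part (2), I would push $\mu$ forward under the homeomorphism $x\mapsto y=e^{-x}$ carrying $[0,\infty)$ onto $(0,1]$, obtaining a probability measure $\tilde\mu$ on $[0,1]$. Then $\mathcal{L}_\mu(s)=\int_0^1 y^s\,d\tilde\mu(y)$, so the integer values $\mathcal{L}_\mu(0),\mathcal{L}_\mu(1),\mathcal{L}_\mu(2),\dots$ are exactly the moments of $\tilde\mu$. Since polynomials are dense in $C[0,1]$ by Weierstrass, these moments determine $\tilde\mu$ uniquely (the Hausdorff moment problem is always determinate), hence determine $\mu$; thus $\mathcal{L}_\mu=\mathcal{L}_\nu$ yields $\mu=\nu$.

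Part (4) is the crux, and I would obtain it from the quasi-analyticity argument underlying Carleman's theorem. For $\mu\in\mathfrak{C}$ the characteristic function $\phi_\mu$ is infinitely differentiable with $\phi_\mu^{(k)}(0)=i^k\mu_k$, and Cauchy–Schwarz gives the derivative bound $\|\phi_\mu^{(k)}\|_\infty\le\int|x|^k\,d\mu\le\mu_{2k}^{1/2}$. Taking $M_k:=\mu_{2k}^{1/2}$, the Carleman divergence $\sum_j\mu_{2j}^{-1/2j}=\infty$ is precisely the Denjoy–Carleman condition $\sum_k M_k^{-1/k}=\infty$, so $\phi_\mu$ lies in a quasi-analytic class on $\mathbb{R}$. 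Two functions of such a class agreeing to infinite order at a point must coincide; since equal moments give $\phi_\mu^{(k)}(0)=\phi_\nu^{(k)}(0)$ for all $k$, we get $\phi_\mu=\phi_\nu$, and part (1) closes the argument. I expect the Denjoy–Carleman input, together with verifying that the derivative bound is exactly what converts the moment-side Carleman condition into the analytic-side quasi-analyticity condition, to be the only genuinely nontrivial step.

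Finally, for part (3) I would reduce to part (4). Finiteness of $\mathcal{M}_\mu$ on a neighborhood $(-\delta,\delta)$ forces $\int e^{s|x|}\,d\mu<\infty$ for $|s|<\delta$, which bounds $\mu_{2j}^{1/2j}=O(j)$ and hence makes $\sum_j\mu_{2j}^{-1/2j}$ diverge; thus $\mu,\nu\in\mathfrak{C}$ automatically. Moreover $\mathcal{M}_\mu$ is real-analytic near $0$ with Taylor coefficients $\mu_k/k!$, so equality of the MGFs on the common neighborhood forces equality of all moments. Part (4) then gives $\mu=\nu$. Everything outside the quasi-analyticity step is either a direct inversion or a clean reduction, so the bulk of the work is concentrated in part (4).
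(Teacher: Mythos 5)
Your proposal is correct, but there is nothing in the paper to compare it against: the paper states this proposition without proof, treating all four parts as classical uniqueness theorems (Fourier inversion, Laplace transform uniqueness, MGF determinacy, and Carleman's theorem) and immediately using them to set up the bijections $\mu\mapsto\phi_\mu$, $\mu\mapsto\mathcal{L}_\mu$, etc. Your write-up supplies standard, self-contained derivations, and the dependency structure you chose is sound: Gaussian regularization for part (1) is the textbook proof; the substitution $y=e^{-x}$ reducing part (2) to the (always determinate) Hausdorff moment problem via Weierstrass is clean and even proves the stronger statement that the integer values $\mathcal{L}_\mu(n)$, $n\ge 0$, suffice --- consonant with the paper's later use of discrete Laplace data; the reduction of part (3) to part (4) via analyticity of the MGF near $0$ and the bound $\mu_{2j}^{1/2j}=O(j)$ is correct; and the quasi-analyticity route to part (4) is the standard proof of Carleman's theorem. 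Two small points you should make explicit to be fully rigorous in part (4): the Denjoy--Carleman criterion in the clean form ``$\sum_k M_k^{-1/k}=\infty$ implies quasi-analyticity'' is usually stated for log-convex $M_k$, and your $M_k=\mu_{2k}^{1/2}$ is indeed log-convex, since Cauchy--Schwarz applied to $x^{2k}=x^{k-1}\cdot x^{k+1}$ gives $\mu_{2k}^2\le\mu_{2k-2}\,\mu_{2k+2}$; and $\phi_\mu-\phi_\nu$ is complex-valued, so one applies quasi-analyticity to its real and imaginary parts separately (each inherits the derivative bounds). Neither is a gap, just a line each; the architecture and all the estimates are right.
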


Let $S\phi, S\mathcal{L}, S\mathcal{M}$ denote respectively the collections of functions that arise from some $\mu$ as in case (1), (2), (3), with the value $+\infty$ possible in case (3).  Let $SMOM$ denote the collection of sequences of real numbers that arise as the moments of some $\mu$ as in case (4), which we regard as a function of $k\ge0$.  On each of these spaces of functions, we use the Borel $\sigma$ algebra generated by the evaluation maps.  Then the content of the last theorem is that $\mu \mapsto \phi_\mu$ is a bijection from $P(\mathbb{R})$ to $S\phi$, $\mu \mapsto \mathcal{L}_\mu$ is a bijection from $P^+$ to $S\mathcal{L}$, $\mu \mapsto \mathcal{M}_\mu$ is a bijection from the $M_{<\infty}$ to $S\mathcal{M}$, and $\mu\mapsto (\mu_k)_{k\ge0}$ is a bijection from $\mathfrak{C}$ to $SMOM$.  Observe that all of these maps are measurable isomorphisms.  It follows that

\begin{prop}
Let $\mu, \nu$ be random probability measures.  Then $\mu=_d\nu$ provided any of the following hold:
\begin{enumerate}
\item $(\phi_\mu(t))_{t\in \mathbb{R}}=_d(\phi_\nu(t))_{t\in\mathbb{R}}$

\item $\mu, \nu\in P^+$ and $(\mathcal{L}_\mu(s))_{s\ge0}=_d(\mathcal{L}_\nu(s))_{s\ge0}$.

\item $\mu, \nu\in M_{<\infty}$ and $(\mathcal{M}_\mu(s))_(s))_{s\in \mathbb{R}}=_d(\mathcal{M}_\nu(s))_{s\in \mathbb{R}}$ (which may be valued $\infty$ for some $s$ and some sample points.)

\item $\mu, \nu\in\mathfrak{C}$ and $(\mu_k)_{k\ge0}=_d(\nu_k)_{k\ge0}$\end{enumerate}
\end{prop}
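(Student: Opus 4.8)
The plan is to reduce each of the four cases to the elementary fact that a fixed measurable map preserves equality in distribution, using the measurable isomorphisms recorded just before the statement. Concretely, write $\Psi$ for the relevant transform map in each case: $\Psi$ is $\mu \mapsto \phi_\mu$ from $P(\mathbb{R})$ onto $S\phi$ in case (1), $\mu \mapsto \mathcal{L}_\mu$ from $P^+$ onto $S\mathcal{L}$ in case (2), $\mu \mapsto \mathcal{M}_\mu$ from $M_{<\infty}$ onto $S\mathcal{M}$ in case (3), and $\mu \mapsto (\mu_k)_{k\ge0}$ from $\mathfrak{C}$ onto $SMOM$ in case (4). In each case the preceding proposition (injectivity of the transform), together with the observation that these maps are measurable isomorphisms, furnishes a measurable inverse $\Psi^{-1}$ defined on the transform space.

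Next I would identify the hypothesis with an equality in distribution on the transform space. In case (1), for example, the random object $(\phi_\mu(t))_{t\in\mathbb{R}}$ is exactly $\Psi(\mu)$ regarded as a random element of $S\phi$, and the hypothesis $(\phi_\mu(t))_{t\in\mathbb{R}}=_d(\phi_\nu(t))_{t\in\mathbb{R}}$ asserts the equality of all finite-dimensional distributions of these processes. The key point to verify is that this process-level equality coincides with equality in law of $\Psi(\mu)$ and $\Psi(\nu)$ as single $S\phi$-valued random variables. Since the $\sigma$-algebra on $S\phi$ is, by definition, generated by the evaluation maps, the finite-dimensional cylinder sets form a $\pi$-system generating it; a Dynkin $\pi$-$\lambda$ argument then shows that two laws agreeing on this $\pi$-system agree on the full $\sigma$-algebra. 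Hence $\Psi(\mu)=_d\Psi(\nu)$. The same reasoning applies verbatim in cases (2)--(4), the only cosmetic differences being that in (3) the evaluation maps take values in $[0,\infty]$ and in (4) the index set is $\mathbb{N}$ rather than $\mathbb{R}$.

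Finally, I would apply the measurable inverse. Because $\mu$ and $\nu$ take values almost surely in the measurable set on which $\Psi$ is defined and invertible (the a.s. membership conditions $\mu,\nu\in P^+$, $M_{<\infty}$, or $\mathfrak{C}$ being measurable, as already noted), we have $\mu=\Psi^{-1}(\Psi(\mu))$ and $\nu=\Psi^{-1}(\Psi(\nu))$ almost surely. Applying the fixed measurable map $\Psi^{-1}$ to $\Psi(\mu)=_d\Psi(\nu)$ preserves equality in distribution, yielding $\mu=_d\nu$.

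I expect the only genuine content to be the middle step, namely confirming that equidistribution of the transform \emph{processes} coincides with equidistribution of the transforms as single random elements of the transform space. Everything else is formal: the classical proposition supplies injectivity of $\Psi$, the measurable-isomorphism remark supplies the measurable inverse, and preservation of $=_d$ under a fixed measurable map is immediate. The potential pitfalls are purely measure-theoretic bookkeeping (that the evaluation $\sigma$-algebra is suitably generated for the $\pi$-$\lambda$ argument, and that the extended-real values in case (3) cause no difficulty), rather than anything probabilistically deep.
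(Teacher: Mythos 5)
Your proposal is correct and is essentially the paper's own argument: the paper deduces the proposition directly from the observation that $\mu\mapsto\phi_\mu$, $\mu\mapsto\mathcal{L}_\mu$, $\mu\mapsto\mathcal{M}_\mu$, and $\mu\mapsto(\mu_k)_{k\ge0}$ are measurable isomorphisms onto $S\phi$, $S\mathcal{L}$, $S\mathcal{M}$, $SMOM$ (each equipped with the evaluation-generated $\sigma$-algebra), which is precisely your scheme of identifying process-level equidistribution with equality of laws of the transform as a single random element and then applying the measurable inverse. The $\pi$-$\lambda$ step you single out is the only detail the paper leaves implicit behind its ``It follows that,'' and you have filled it in correctly.
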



To summarize, under suitable conditions, knowing the characteristic function, laplace transform, MGF, or moments jointly tells us the joint distribution of a random measure.  It is natural to ask what happens when this type of information is only known marginally. (We call these the marginal problems, as opposed to joint.)  Actually, we can already provide an answer to $2$ of these problems.  One counterexample that is uniformly bounded will simultaneously witness the failure of both statements.

\begin{thm}
There exist $\mu, \nu$ uniformly bounded random measures in $P^+$ such that $\mu\neq_d\nu$ while yet $\forall s \ge0,\, \mathcal{L}_\mu(s)=_d\mathcal{L}_\nu(s)$ and $\forall s \in \mathbb{R},\, \mathcal{M}_\mu(s)=_d\mathcal{M}_\nu(s)$.

\end{thm}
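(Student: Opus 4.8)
The plan is to reuse Proposition \ref{example maker} essentially verbatim, repackaging its output as a pair of random atomic measures supported on $\{0,1,2,3\}$. First I would obtain $U=(U_0,U_1,U_2,U_3)\neq_d V=(V_0,V_1,V_2,V_3)$, both valued in $T_4$, with $c_4(y)\cdot U=_d c_4(y)\cdot V$ for every $y\in\mathbb{R}$, and then define the random probability measures
\begin{equation}
\mu=\sum_{j=0}^3 U_j\delta_j,\qquad \nu=\sum_{j=0}^3 V_j\delta_j .
\end{equation}
Since $U,V$ are probability vectors, $\mu$ and $\nu$ are genuine random elements of $P^+$, and they are uniformly bounded because a.s. supported in $\{0,1,2,3\}\subset[0,3]$.

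The key point I would exploit is that, for such an atomic measure, both the Laplace transform and the MGF are projections of the weight vector along the monomial curve $c_4$, merely reparametrized. Indeed, for $s\ge0$,
\begin{equation}
\mathcal{L}_\mu(s)=\sum_{j=0}^3 U_j e^{-js}=c_4(e^{-s})\cdot U ,
\end{equation}
while for $s\in\mathbb{R}$,
\begin{equation}
\mathcal{M}_\mu(s)=\sum_{j=0}^3 U_j e^{js}=c_4(e^{s})\cdot U ,
\end{equation}
with the analogous identities for $\nu$ and $V$. Since $e^{-s}$ and $e^{s}$ are real, applying the conclusion $c_4(y)\cdot U=_d c_4(y)\cdot V$ at $y=e^{-s}$ and at $y=e^{s}$ yields $\mathcal{L}_\mu(s)=_d\mathcal{L}_\nu(s)$ for all $s\ge0$ and $\mathcal{M}_\mu(s)=_d\mathcal{M}_\nu(s)$ for all $s\in\mathbb{R}$. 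In fact the MGF statement already contains the Laplace one through $\mathcal{L}_\mu(s)=\mathcal{M}_\mu(-s)$, so a single construction settles both, as anticipated by the remark preceding the theorem.

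To finish I would check that $\mu\neq_d\nu$. The measures $\delta_0,\delta_1,\delta_2,\delta_3$ are linearly independent in the space of finite signed measures, so the linear injection $w\mapsto\sum_{j}w_j\delta_j$ restricts to a measurable isomorphism of $T_4$ onto its image in $P^+$; the laws of $\mu$ and $\nu$ are then the pushforwards of the laws of $U$ and $V$ under this isomorphism, and these differ precisely because $U\neq_d V$.

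I do not expect a genuine obstacle here: the entire difficulty is already absorbed into Proposition \ref{example maker}, whose existence rests on the arithmetic dependence within $\{0,1,2,3\}$ — concretely the relation $x_1^2=x_0 x_2$, which places $c_4$ on a quadric and, via Lemma \ref{cramer}, makes its image non-determining. The only matters requiring care are that one and the same random pair must witness the failure of both the Laplace and the MGF statements (handled by the universal-in-$y$ conclusion of Proposition \ref{example maker}) and the injectivity and measurability of the weight-to-measure correspondence used to transfer $U\neq_d V$ to $\mu\neq_d\nu$.
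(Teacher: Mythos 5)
Your proposal is correct and is essentially the paper's own proof: both obtain $U,V$ from Proposition \ref{example maker}, define $\mu=\sum_j U_j\delta_j$, $\nu=\sum_j V_j\delta_j$, and observe that the transforms are evaluations of $c_4(y)\cdot U$ and $c_4(y)\cdot V$ at $y=e^{-s}$ (Laplace) and $y=e^{s}$ (MGF). Your explicit substitutions and the injectivity check for the weight-to-measure map merely spell out details the paper compresses into ``upon calculating the transforms.''
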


\begin{proof}
By Proposition \ref{example maker}, we obtain random variables $U, V\in T_4$ with $U\neq_dV$ such that $\forall y \in \mathbb{R},\, c_4(y)\cdot U=_dc_4(y)\cdot V$. Define random probability measures $\mu, \nu$ supported in $\{0, 1, 2, 3\}$ via $\mu=U_0\delta_0+U_1\delta_1+U_2\delta_2+U_3\delta_3$ and $\nu=V_0\delta_0+V_1\delta_1+V_2\delta_2+V_3\delta_3$.  Upon calculating the transforms of these random measures, the fact that $\forall y \in \mathbb{R},\, c_4(y)\cdot U=_dc_4(y)\cdot V$ translates into the fact that the (3') holds, and thus that (2') does as well.
\end{proof}

It is not difficult to believe that the moment problem will require a different argument to handle in the marginal case, but it may be surprising that the characteristic function marginal problem could not be handled in the above proof.  That is because the characteristic function is a complex valued random variable.  Indeed, we have been avoiding this situation partly because the methods of the last two sections cannot deal with complex valued uniqueness problems, and partly because we did not have to since the problem had already been solved in the negative, and special subclasses of interest already were nonnegative anyway.  However, here we arrive at a case where the signed question has not been solved yet, and our methods with some adjustment will actually be applicable.  We will present a proof of nonuniqueness for the marginal characteristic function case in Subsection \ref{margchf}.  From this proof, it will be plausible that the arithmetic dependences of the allowed support set is the culprit, because it reduces the analysis to one of polynomials, for which we will be able to use Lemma \ref{cramer}.

In Subsection \ref{margmom} we will present a proof of nonuniqueness in the moment marginal case, even if the restriction is made to a finite support set, hence to the uniformly bounded hypothesis.  In Subsection \ref{prime}, we will show how the relevant arithmetic structure from Subsection \ref{margmom} was multiplicative, and that therefore if the set of allowed values consists of coprime numbers, then uniqueness of the distribution of a random measure with a given set of moments does hold.

We display our results concerning random uniqueness problems in the following table. We remind the reader that the assumption for the MGF case is finite MGF in a neighborhood around $0$ (a.s.), the assumption for Laplace transform is nonnegativity, there are no assumptions for characteristic function, and the Carleman condition is assumed to hold (a.s.) for the moment problem.  The entries ``yes'' and ``no'' refer to whether or not uniqueness holds.  All answers ``no'' come with a uniformly bounded (pair of) counterexamples.

\begin{tabular}{l*{2}{c}}
                  & joint & marginal\\
\hline
MGF & yes & no \\
Nonnegative Laplace Transform           & yes & no \\
Characteristic Function           & yes & no \\
Moment Problem     & yes & no \\
\end{tabular}

\subsection{Uniqueness Fails for Characteristic Function Problem}

\label{margchf}

In this subsection, we will use $z=x+iy$ to denote a complex number.  We define $\forall n \ge0,\, P_n(x, y)=\Re(z^n)$ and $Q_n(x,y)=\Im(z^n)$ which are homogeneous polynomials in the two variables $x, y$ of degree $n$.  By default, our polynomials will be defined on a Euclidean space $\mathbb{R}^d$ where $d\ge1$ is the number of variables of the polynomial.  First we need a lemma.

\begin{lem}
For $N, l \in \mathbb{N}$ large enough, there exists $p$, a nonzero degree $l$ homogeneous polynomial in $N$ variables such that $\forall s, t, x, y \in \mathbb{R}$ we have 

\begin{equation}
p(sP_0(x, y)+tQ_0(x, y), \dots, sP_{N-1}(x, y)+tQ_{N-1}(x, y) )=0.\label{projective}
\end{equation}
\end{lem}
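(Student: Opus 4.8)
The plan is to reuse the dimension-counting strategy that already drives the proof of Proposition \ref{general}: rather than construct $p$ by hand, I would exhibit it as a nonzero element in the kernel of an explicit linear map, and show that this kernel is nontrivial as soon as $N$ and $l$ are taken large. First I would abbreviate $w_n(x,y,s,t) := sP_n(x,y) + tQ_n(x,y)$ for $0 \le n \le N-1$ and record the bidegree structure: since $P_n$ and $Q_n$ are homogeneous of degree $n$ in $(x,y)$, each $w_n$ is homogeneous of degree $n$ in $(x,y)$ and of degree $1$ in $(s,t)$. I then consider the linear map $\Phi$ sending a homogeneous degree-$l$ polynomial $p$ in $N$ variables to the polynomial $\Phi(p) := p(w_0, \dots, w_{N-1})$ in the four real variables $(x,y,s,t)$. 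Since a polynomial vanishing for all real $(x,y,s,t)$ is the zero polynomial, a nonzero element of $\ker \Phi$ is exactly the object the lemma demands; so it suffices to prove $\ker\Phi \ne \{0\}$ for a suitable choice of $N,l$.

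Next I would compare the dimensions of the domain and codomain of $\Phi$. The domain, the space of homogeneous degree-$l$ polynomials in $N$ variables, has dimension $\binom{N+l-1}{N-1}$, which for fixed $N$ is a polynomial in $l$ of degree $N-1$. For the codomain, I observe that every monomial $\prod_n w_n^{\alpha_n}$ with $\sum_n \alpha_n = l$ is homogeneous of degree $l$ in $(s,t)$ and of degree $\sum_n n\alpha_n \le (N-1)l$ in $(x,y)$; hence $\Phi(p)$ always lands in the finite-dimensional space of polynomials in $(x,y,s,t)$ that are homogeneous of degree $l$ in $(s,t)$ and of degree at most $(N-1)l$ in $(x,y)$. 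Counting monomials $x^a y^b s^c t^d$ with $c+d=l$ and $a+b \le (N-1)l$, this space has dimension at most $(l+1)\binom{(N-1)l+2}{2}$, which for fixed $N$ is a polynomial in $l$ of degree $3$.

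Finally I would draw the conclusion from the growth comparison. Because $N-1 > 3$ whenever $N \ge 5$, the domain dimension $\binom{N+l-1}{N-1} \sim l^{N-1}/(N-1)!$ eventually dominates the codomain bound $\sim \tfrac{(N-1)^2}{2}\,l^{3}$ as $l \to \infty$. Fixing any such $N$ and taking $l$ large enough forces $\dim(\mathrm{domain}) > \dim(\mathrm{codomain})$, so $\Phi$ cannot be injective; any nonzero $p \in \ker\Phi$ then satisfies \eqref{projective} identically in $s,t,x,y$, which is precisely the desired polynomial.

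The step I expect to be the main obstacle is the codomain bound, since the whole argument rests on it. The point to get right is that the image carries only three effective degrees of freedom that grow with $l$ — the $(s,t)$-degree being pinned at $l$ contributes one linear factor, and the $(x,y)$-degree being capped at $(N-1)l$ contributes a quadratic factor — so the codomain grows only cubically in $l$, while the domain grows like $l^{N-1}$. Choosing $N$ past the threshold $N-1 > 3$ is exactly what lets the polynomial-degree mismatch in $l$ win, and I would take care to state the bound as an inclusion into a fixed ambient monomial space (rather than computing the image exactly), since only the upper bound on its dimension is needed.
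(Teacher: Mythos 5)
Your proposal is correct and follows essentially the same route as the paper: both exhibit $p$ as a nonzero element of the kernel of the linear evaluation map $\Phi$ from homogeneous degree-$l$ polynomials in $N$ variables to polynomials in $(s,t,x,y)$, forced to exist by a dimension count. The only difference is quantitative: you exploit the bihomogeneous structure (degree exactly $l$ in $(s,t)$, at most $(N-1)l$ in $(x,y)$) to get a codomain bound cubic in $l$, letting you fix any $N\ge 5$ and grow $l$, whereas the paper uses the cruder bound $\binom{lN+4}{4}\le (lN+4)^4$ on all polynomials of degree at most $lN$ and takes $N=l$ large so that $\binom{2N-1}{N}$ wins — your refinement is sharper but the argument is the same.
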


\begin{proof}

We first note that if some $N, l$ satisfies the conditions of the lemma, then  all greater pairs would work as well, so our use of the phrase ``large enough'' is justified.  We now only need to find one pair $N, l$ for which \eqref{projective} holds.

For any $l, N,$ let $S_{l, N}$ denote the real vector space of polynomials in $s, t, x, y$ of degree at most $lN$ and let $T_{l, N}$ denote the real vector space of degree $l$ homogeneous polynomials in $N$ (commuting) variables. 

Observe that

\begin{equation}p(sP_0(x, y)+tQ_0(x, y), \dots, sP_{N-1}(x, y)+tQ_{N-1}(x, y) )\end{equation}
defines a polynomial (not necessarily homogeneous) of degree at most $lN$ in the $4$ variables $s, t, x, y$.  Define the corresponding evaluation map

\begin{equation}\Phi: T_{l, N}\rightarrow S_{l, N}\end{equation}
via

\begin{equation}\Phi(p)=p(sP_0(x, y)+tQ_0(x, y), \dots, sP_{N-1}(x, y)+tQ_{N-1}(x, y) ).\end{equation}
Observe that $\Phi$ is linear.  The dimension of its codomain is $\sum_{j=0}^{lN} \binom{j+3}{3}\le (lN+4)^4.$ If we restrict to $l=N$ then the dimension of the domain is $\binom{2N-1}{N}$.

Therefore, for $N=l$ large enough, the kernel of $\Phi$ is nontrivial.

\end{proof}

As before, we will construct a precursor to the counterexample by constructing merely bounded random vectors with the desired property, and then we will adjust them so as to turn them into random probability vectors.  The following can be thought of as the complex analog of a part of the argument used in the proof of Proposition \ref{example maker}.  The equality in distributions are meant for complex-valued random variables.

\begin{lem}

For $N$ large enough, there exist 

\begin{equation}U=(U_0, \dots, U_{N-1}), V=(V_0, \dots, V_{N-1})\end{equation}
bounded random vectors taking values in $\mathbb{R}^N$ for which $\forall z\in\mathbb{C}$ we have

\begin{equation}
\sum_{j=0}^{N-1}z^jU_j=_d\sum_{j=0}^{N-1}z^jV_j \label{eqdistr}
\end{equation}
while yet $U\neq_dV$

\end{lem}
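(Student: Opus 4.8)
The plan is to read the desired complex equality in distribution \eqref{eqdistr} as a statement about real one-dimensional projections of the random vectors $U, V$, and then to apply Lemma \ref{cramer} with the projective variety supplied by the preceding lemma. First I would write $z = x+iy$ and expand $z^j = P_j(x,y) + iQ_j(x,y)$, so that
\[
\sum_{j=0}^{N-1} z^j U_j = \left(\sum_{j=0}^{N-1} P_j(x,y)U_j\right) + i\left(\sum_{j=0}^{N-1} Q_j(x,y)U_j\right),
\]
and similarly for $V$. Two complex random variables agree in distribution exactly when the $\mathbb{R}^2$-valued vectors of their (real part, imaginary part) agree in distribution, and by Cram\'er--Wold the latter holds if and only if every real projection agrees; that is, for all $s,t\in\mathbb{R}$,
\[
s\sum_j P_j(x,y)U_j + t\sum_j Q_j(x,y)U_j =_d s\sum_j P_j(x,y)V_j + t\sum_j Q_j(x,y)V_j.
\]

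Setting $w_j = sP_j(x,y) + tQ_j(x,y)$, this says precisely $w\cdot U =_d w\cdot V$, so requiring \eqref{eqdistr} for every $z\in\mathbb{C}$ is equivalent to requiring $w\cdot U =_d w\cdot V$ for every $w$ in the set
\[
S := \{(sP_0(x,y)+tQ_0(x,y),\dots, sP_{N-1}(x,y)+tQ_{N-1}(x,y)) : s,t,x,y\in\mathbb{R}\}\subseteq \mathbb{R}^N.
\]
Next I would invoke the preceding lemma, which for $N$ large enough yields a nonzero homogeneous polynomial $p$ vanishing identically on $S$; hence $S$ is contained in the projective variety $p^{-1}(0)$. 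By Lemma \ref{cramer} (with $d=N$), a set contained in a projective variety is not determining, so unwinding the definition of \textbf{determining} produces bounded $\mathbb{R}^N$-valued $U, V$ with $U\neq_d V$ yet $w\cdot U =_d w\cdot V$ for all $w\in S$. By the equivalence just established, these $U,V$ satisfy \eqref{eqdistr} for all $z\in\mathbb{C}$ while $U\neq_d V$, which is exactly what is claimed.

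The step requiring the most care is the Cram\'er--Wold bookkeeping: one must verify that the complex distributional equality, which is a priori a statement about the \emph{joint} law of real and imaginary parts, is faithfully captured by the family of real projections $w\cdot U$ as $w$ ranges over $S$, and in particular that allowing both scalars $s$ and $t$ (rather than a single real scaling) is precisely what reconstitutes the two-dimensional projection needed for $\mathbb{R}^2$-equality. Once this identification is fixed, everything else is a direct citation: the preceding lemma gives the variety and Lemma \ref{cramer} converts containment in a variety into failure of determination, so no further analytic work is needed.
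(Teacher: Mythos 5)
Your proposal is correct and follows essentially the same route as the paper: both use the preceding lemma to place the set $S$ of vectors $(sP_j(x,y)+tQ_j(x,y))_{j<N}$ inside a projective variety, apply Lemma \ref{cramer} to produce bounded $U\neq_d V$ whose projections along $S$ agree in law, and then use the Cram\'er--Wold device (for each fixed $x,y$, varying $s,t$) to upgrade the family of real projections to equality in distribution of the $\mathbb{R}^2$-valued, hence complex-valued, random variables. The only cosmetic difference is that you frame the Cram\'er--Wold step as an equivalence up front, whereas the paper constructs $U,V$ first and applies it in one direction; the content is identical.
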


\begin{proof}

Take $N$ large enough so that the set 

\begin{equation}\{(sP_0(x, y)+tQ_0(x, y), \dots, sP_{N-1}(x, y)+tQ_{N-1}(x, y))\in\mathbb{R}^N|s, t, x, y\in \mathbb{R}\}\end{equation}
is contained in a projective variety.  By Lemma \ref{cramer} we may find $U, V$ bounded random vectors so that $\forall s, t, x, y $ we have

\begin{align*}(sP_0(x, y)+tQ_0(x, y), \dots, sP_{N-1}(x, y)+tQ_{N-1}(x, y))\cdot U=_d\\(sP_0(x, y)+tQ_0(x, y), \dots, sP_{N-1}(x, y)+tQ_{N-1}(x, y))\cdot V\numberthis\end{align*}

while yet $U\neq_d V$.

By the Cram\'er Wold device, we have that $\forall x, y \in \mathbb{R}$, the $\mathbb{R}^2$-valued random vectors

\begin{equation}\big((P_0(x, y), \dots, P_{N-1}(x, y))\cdot U, (Q_0(x, y), \dots, Q_{N-1}(x, y))\cdot U\big)\end{equation}
and

\begin{equation}\big((P_0(x, y), \dots, P_{N-1}(x, y))\cdot V, (Q_0(x, y), \dots, Q_{N-1}(x, y))\cdot V\big)\end{equation}
have the same distribution.  But by real isomorphism of $\mathbb{R}^2$ with $\mathbb{C}$, this is the same as saying that

\begin{align*}
(P_0(x, y), \dots, P_{N-1}(x, y))\cdot U + i((Q_0(x, y), \dots, Q_{N-1}(x-y))\cdot U)=_d\\(P_0(x, y), \dots, P_{N-1}(x, y))\cdot V + i((Q_0(x, y), \dots, Q_{N-1}(x, y))\cdot V) \numberthis
\end{align*}
But by the definition of the $P_n, Q_n$ this is the same as saying $\forall z \in \mathbb{C}$ we have

\begin{equation}(z^0, \dots, z^{N-1})\cdot U=_d (z^0, \dots, z^{N-1})\cdot V\end{equation}
as complex-valued random variables.  Thus, $U$ and $V$ have the required properties.
\end{proof}

\begin{lem}
There is $N'$ large enough so that there exist 

\begin{equation}U'=(U'_0, \dots, U'_{N-1})\neq_dV'=(V'_0, \dots, V'_{N-1}),\end{equation}
both valued in 

\begin{equation}T_{N'}:=\{(x_0, \dots, x_{N'-1})|\forall j:0\le j\le N'-1,\, x_j\ge0, \sum_{j=0}^{N'-1}x_j=1\},\end{equation}
for which $\forall z \in \mathbb{C}$

\begin{equation}\sum_{j=0}^{N'-1}z^jU'_j=_d\sum_{j=0}^{N'-1}z^jV'_j\label{final}.\end{equation}
\end{lem}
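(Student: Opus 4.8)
The plan is to mimic the passage from $W,Z$ to probability vectors carried out in the proof of Proposition \ref{example maker}, but now with the complex monomials $z^j$ in place of the real monomials $y^j$ and with a longer telescoping. First I would invoke the preceding lemma to obtain an integer $N$ and bounded $\mathbb{R}^N$-valued random vectors $U\neq_d V$ satisfying \eqref{eqdistr}, namely $\sum_{j=0}^{N-1}z^jU_j=_d\sum_{j=0}^{N-1}z^jV_j$ for every $z\in\mathbb{C}$ as complex-valued random variables. The two operations I may perform on the pair $(U,V)$ without disturbing these properties are a common rescaling $U\mapsto aU$ (with $a\neq 0$ real) and a common translation $U\mapsto U+b$ (with $b\in\mathbb{R}^N$): rescaling multiplies both sides of \eqref{eqdistr} by the deterministic scalar $a$, while translation adds the deterministic polynomial $\sum_j z^j b_j$ to both sides, and each is a fixed complex-affine transformation of a complex random variable, hence preserves equality in law. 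Both are affine bijections of $\mathbb{R}^N$, so they also preserve $U\neq_d V$.

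Next I would set $N'=N+1$ and force the simplex constraints by a telescoping. Let $H_N:=\{(u_0,\dots,u_{N-1}): 1\ge u_0\ge u_1\ge\cdots\ge u_{N-1}\ge 0\}$, which has nonempty interior; since $U,V$ are bounded, a common compact set contains their ranges, so by the rescale-and-translate argument above I may assume $U,V$ are $H_N$-valued. Then I would define
\begin{equation}U'=(1-U_0,\ U_0-U_1,\ U_1-U_2,\ \dots,\ U_{N-2}-U_{N-1},\ U_{N-1})\end{equation}
and $V'$ analogously. The ordering defining $H_N$ makes every entry nonnegative, and the entries telescope to $1$, so $U',V'$ are $T_{N'}$-valued.

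The point of this particular definition is the identity $\sum_{j=0}^{N'-1}z^jU'_j=1+(z-1)\sum_{j=0}^{N-1}z^jU_j$, which one checks by expanding $(z-1)\sum_j z^jU_j$ and collecting powers of $z$ (the constant term contributes $1-U_0$, the middle terms the differences $U_{k-1}-U_k$, and the top term $U_{N-1}$). Because for each fixed $z$ the right-hand side is a fixed complex-affine function of the complex random variable $\sum_j z^jU_j$, and the same affine function relates $V'$ to $V$, property \eqref{eqdistr} for $(U,V)$ yields \eqref{final} for $(U',V')$. Finally, the map $U\mapsto U'$ is an affine bijection from $\mathbb{R}^N$ onto the hyperplane $\{\sum_j x_j=1\}\subset\mathbb{R}^{N'}$ (one recovers $U$ by partial summation from the top together with $U_0=1-U'_0$), so $U\neq_d V$ forces $U'\neq_d V'$.

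I expect the only genuinely delicate point to be bookkeeping: verifying that the telescoped map indeed realizes multiplication by $(z-1)$ followed by adding $1$, and that this complex-affine operation preserves equality in distribution of the complex-valued transforms. Everything else — nonnegativity, the unit-sum constraint, and the preservation of $\neq_d$ — follows from the ordering in $H_N$ and the invertibility of the difference map, exactly as in Proposition \ref{example maker}.
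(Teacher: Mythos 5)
Your proposal is correct and follows essentially the same route as the paper's proof: the same rescale-and-translate reduction into $H$, the same telescoped differences (the paper writes $U'_j=U_{j-1}-U_j$ with $U_{-1}=1$, $U_N=0$, which is exactly your map), and the same observation that passing from $\sum_j z^jU_j$ to $\sum_j z^jU'_j$ is multiplication by $(z-1)$ plus adding $1$, a deterministic complex-affine operation preserving equality in law, with invertibility of the difference map giving $U'\neq_d V'$. Your direct verification of the identity $\sum_{j=0}^{N'-1}z^jU'_j=1+(z-1)\sum_{j=0}^{N-1}z^jU_j$ merely compresses the paper's two intermediate displays into one step.
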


\begin{proof}

Take $N, U, V$ from the last lemma.  Set $N'=N+1$.  Notice that \eqref{eqdistr} still holds if we rescale or translate $U, V$ in the same way.  Consider $H:=\{(x_0, \dots, x_{N-1})|1\ge x_0\ge\dots\ge x_{N-1}\ge 0\}$.  Since $H$  has nonempty interior, we may take $U, V \in H$ without loss of generality.  Then when we define $U_{-1}=1=V_{-1}, U_N=0=V_N,$ and $\forall j: 0\le j\le N,\, U'_j=U_{j-1}-U_{j}, V'_j=V_{j-1}-V_{j}$, observe that $U', V'$ are $T_{N'}$ valued.  Furthermore, $U'\neq_d V'$.  A calculation shows that we have $\forall z \in \mathbb{C}$

\begin{align*}
(z^0, \dots, z^{N-1})\cdot (U'_1+\dots+U'_{N}, U'_2+\dots+U'_{N}, \dots, U'_{N})=_d\\(z^0, \dots, z^{N-1})\cdot (V'_1+\dots+V'_{N}, V'_2+\dots+V'_{N}, \dots, V'_{N}).\numberthis
\end{align*}

From this it follows that

\begin{align*}
(z^0)U'_1+(z^0+z^1)U'_2+\dots+(z^0+\dots+z^{N-1})U'_N=_d \\(z^0)V'_1+(z^0+z^1)V'_2+\dots+(z^0+\dots+z^{N-1})V'_N.\numberthis
\end{align*}

By multiplying by $(z-1)$, using the definition of $T_{N'}$, and adding $1$ to both sides, we obtain $\forall z \in \mathbb{C}$

\begin{equation}\sum_{j=0}^{N'-1}z^jU'_j=_d\sum_{j=0}^{N'-1}z^jV'_j\end{equation}

Thus, $U'$ and $V'$ have the required properties.
\end{proof}

We use the homeomorphism, hence measurable isomorphism of $T_{N'}$ with the collection of probability measures supported in $\{0, \dots, N'-1\}$ given by $v\mapsto v_0\delta_0+\dots+v_{N'-1}\delta_{N'-1}$ in order to see the following:

\begin{thm}
For $N'$ large enough, there are two random probability measures $\mu, \nu$ with distinct distributions which are a.s. supported in $\{0, \dots, N'-1\}$ for which $\forall t \in \mathbb{R},\, \phi_\mu(t)=_d\phi_\nu(t)$.

\end{thm}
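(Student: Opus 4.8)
The plan is to read the counterexample off directly from the lemma immediately preceding the statement, transporting it through the measurable isomorphism between $T_{N'}$ and the probability measures on $\{0, \dots, N'-1\}$. First I would take the random vectors $U'=(U'_0, \dots, U'_{N'-1})$ and $V'=(V'_0, \dots, V'_{N'-1})$ supplied by that lemma: they are $T_{N'}$-valued, satisfy $U'\neq_d V'$, and obey $\sum_{j=0}^{N'-1} z^j U'_j =_d \sum_{j=0}^{N'-1} z^j V'_j$ for every $z\in\mathbb{C}$. I would then define
\begin{equation}
\mu = \sum_{j=0}^{N'-1} U'_j\,\delta_j, \qquad \nu = \sum_{j=0}^{N'-1} V'_j\,\delta_j,
\end{equation}
which are random probability measures supported a.s.\ in $\{0, \dots, N'-1\}$.

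The next step is to argue $\mu\neq_d\nu$. Since the map $v\mapsto \sum_j v_j\delta_j$ is a measurable isomorphism from $T_{N'}$ onto this family of measures, $\mu$ and $\nu$ are the pushforwards of $U'$ and $V'$ under one and the same Borel isomorphism; hence their laws differ precisely because $U'\neq_d V'$.

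The remaining step is the equality of marginal characteristic functions. For each fixed $t\in\mathbb{R}$, by definition of $\phi_\mu$,
\begin{equation}
\phi_\mu(t) = \int_{\mathbb{R}} e^{itx}\, d\mu(x) = \sum_{j=0}^{N'-1} U'_j\, e^{itj} = \sum_{j=0}^{N'-1} (e^{it})^{j}\, U'_j,
\end{equation}
and likewise for $\nu$ with $V'$ in place of $U'$. Substituting $z=e^{it}$ into the equidistribution conclusion of the lemma then gives $\phi_\mu(t) =_d \phi_\nu(t)$ as complex-valued random variables, for every $t$, which is exactly the claim.

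I expect essentially no obstacle at this final stage: the genuine work was front-loaded into the earlier lemmas (the dimension count producing a homogeneous polynomial vanishing on the relevant image, together with the Cram\'er--Wold step and the embedding into the simplex $T_{N'}$), and these were engineered so that \emph{complex} arguments $z$ are permitted. In fact the preceding lemma asserts equidistribution for all $z\in\mathbb{C}$, whereas here I only need $z$ ranging over the unit circle $\{e^{it}:t\in\mathbb{R}\}$, so the available hypotheses are comfortably stronger than required. The only point to keep in mind is that each equality in distribution is between complex-valued random variables --- equivalently, between the joint laws of real and imaginary parts --- which is precisely the form in which the earlier lemma was phrased, so no additional argument is needed to pass from real-variable to characteristic-function statements.
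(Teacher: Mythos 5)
Your proposal is correct and matches the paper's proof essentially verbatim: the paper likewise pushes $U', V'$ from the preceding lemma through the isomorphism $v\mapsto v_0\delta_0+\dots+v_{N'-1}\delta_{N'-1}$ and substitutes $z=e^{it}$ into the lemma's equidistribution \eqref{final} to identify the two sides as the random characteristic functions of $\mu$ and $\nu$. Your added remarks (that $\mu\neq_d\nu$ follows from pushing both laws through one Borel isomorphism, and that only $z$ on the unit circle is needed) are accurate elaborations of points the paper leaves implicit.
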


\begin{proof}

Obtain $U', V', N'$ as in the last lemma, and set $\mu=U'_0\delta_0+\dots+U'_{N'-1}\delta_{N'-1}$ and $\nu=V'_0\delta_0+\dots+V'_{N'-1}\delta_{N'-1}$.  Set $z=e^{it}$ in \eqref{final} so that its left side is the random characteristic function of $\mu$ and its right side is the random characteristic function of $\nu$.  This construction has the required properties.

\end{proof}

\subsection{Too Many Multiplicative Relationships Spoils Uniqueness of the Marginal Moment Problem}

\label{margmom}

Let us see that $4$ values is again enough to prove nonuniqueness in the marginal version of the random moment problem:

\begin{thm}

There exist $\mu, \nu$ random measures a.s. supported in $\{1, 2, 4, 8\}$ for which $\forall k \ge0,\, \mu_k=_d\nu_k$ while yet $\mu\neq_d\nu$.

\end{thm}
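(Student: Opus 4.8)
The plan is to recognize that the support set $\{1, 2, 4, 8\}$ is \emph{multiplicatively} structured, being exactly $\{2^0, 2^1, 2^2, 2^3\}$, and that this makes the marginal moment problem an immediate application of Proposition \ref{example maker}, in direct parallel to the way the \emph{additively} structured set $\{0, 1, 2, 3\}$ yielded Theorem \ref{0123} for the Laplace transform. The content of the result is entirely in this choice of value set.

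First I would identify a random measure supported in $\{1, 2, 4, 8\}$ with a random probability vector $U = (U_0, U_1, U_2, U_3) \in T_4$ via $\mu = U_0 \delta_1 + U_1 \delta_2 + U_2 \delta_4 + U_3 \delta_8$, using that this assignment is a homeomorphism, hence a measurable isomorphism, between $T_4$ and the probability measures on $\{1, 2, 4, 8\}$. Then comes the key computation: the $k$-th moment is
\[ \mu_k = 1^k U_0 + 2^k U_1 + 4^k U_2 + 8^k U_3 = (2^k)^0 U_0 + (2^k)^1 U_1 + (2^k)^2 U_2 + (2^k)^3 U_3 = c_4(2^k) \cdot U, \]
where $c_4(y) = (1, y, y^2, y^3)$ as in Proposition \ref{example maker}. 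The whole point is that raising each base $2^j$ to the power $k$ converts the moment vector into an evaluation of $c_4$ at the single real number $y = 2^k$.

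Next I would invoke Proposition \ref{example maker} to obtain $U \neq_d V$ valued in $T_4$ with $c_4(y) \cdot U =_d c_4(y) \cdot V$ for every $y \in \mathbb{R}$. Defining $\mu$ and $\nu$ from $U$ and $V$ as above, I would specialize $y = 2^k$ (a legitimate real value for each fixed $k \ge 0$) to conclude $\mu_k =_d \nu_k$ for all $k$. Finally, since $U \neq_d V$, the measure-valued random variables satisfy $\mu \neq_d \nu$ by transporting the distinctness through the measurable isomorphism above; note the moments are automatically finite since the measures have finite support.

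There is essentially no serious obstacle here; the only thing to watch is that Proposition \ref{example maker} delivers the equality in distribution of $c_4(y) \cdot U$ and $c_4(y) \cdot V$ for \emph{all} real $y$, whereas the moment problem requires it only at the countably many points $y = 2^k$, which is strictly weaker, so no extra work is needed. I would close with the heuristic remark that four values again suffice to spoil uniqueness because the multiplicative dependences among $\{1, 2, 4, 8\}$ (common powers of a single base) play exactly the role that the additive dependences among $\{0, 1, 2, 3\}$ played in Theorem \ref{0123}, foreshadowing the contrasting situation in the following subsection, where taking a coprime value set destroys these multiplicative dependences and restores uniqueness.
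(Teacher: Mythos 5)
Your proposal is correct and is essentially identical to the paper's own proof: both reduce via the measurable isomorphism $v \mapsto v_0\delta_1 + v_1\delta_2 + v_2\delta_4 + v_3\delta_8$ between $T_4$ and the probability measures on $\{1,2,4,8\}$, rewrite the $k$-th moment as $2^{0k}U_0 + 2^{1k}U_1 + 2^{2k}U_2 + 2^{3k}U_3 = c_4(2^k)\cdot U$, and then specialize the conclusion of Proposition \ref{example maker} to $y = 2^k$. Your explicit observation that only the countably many values $y = 2^k$ are needed (strictly weaker than what the proposition supplies) is a correct reading of the same step the paper performs implicitly.
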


\begin{proof}

By the identification of $T_4$ with the collection of probability measures supported in $\{1, 2, 4, 8\}$ (given by $v\mapsto v_0\delta_1+v_1\delta_2+v_2\delta_4+v_3\delta_8$) we have that it suffices to find two random probability vectors $U=(U_0, U_1, U_2, U_3), V=(V_0, V_1, V_2, V_3)$ for which $U\neq_d V$ and $\forall k \ge0$

\begin{equation}
\forall k\ge0,\, U_0*1^k+U_12^k+U_24^k+U_38^k=_dV_0*1^k+V_12^k+V_24^k+V_38^k.\label{moment}
\end{equation}
That this suffices is because upon defining $\mu=U_0\delta_1+U_1\delta_2+U_2\delta_4+U_3\delta_8$ and $\nu=V_0\delta_1+V_1\delta_2+V_2\delta_4+V_3\delta_8$, which are random probability measures a.s. supported on $\{1, 2, 4, 8\}$, we would have that the left side of \eqref{moment} is $\mu_k$ and the right side is $\nu_k$.

We may write \eqref{moment} as

\begin{equation}\forall k\ge0,\, U_0*2^{0k}+U_12^{1k}+U_22^{2k}+U_32^{3k}=_dV_0*2^{0k}+V_12^{1k}+V_22^{2k}+V_32^{3k}.\end{equation}
Pick $U, V$ from Lemma \ref{example maker}.  

\end{proof}

\subsection{How The Right Kind of Arithmetic Independence Can Help}

\label{prime}

Let $a_0, \dots, a_{N-1}$ be a list of nonzero natural numbers which are pairwise coprime.

\begin{thm}

If $\mu, \nu$ are random probability measures which are a.s. supported in $\{a_0,\dots,a_{N-1}\}$ and $\forall k\ge0,\, \mu_k=_d\nu_k$ then $\mu=_d\nu$.

\end{thm}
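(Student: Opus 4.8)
The plan is to reduce, exactly as in the previous subsections, to a statement about the determining property of a suitable point set, and then to exploit unique factorization. First I would use the measurable isomorphism $v \mapsto \sum_{j=0}^{N-1} v_j \delta_{a_j}$ between the simplex $T_N := \{(v_0,\dots,v_{N-1}) \mid v_j \ge 0,\ \sum_j v_j = 1\}$ and the probability measures supported in $\{a_0,\dots,a_{N-1}\}$, writing $\mu = \sum_j U_j \delta_{a_j}$ and $\nu = \sum_j V_j \delta_{a_j}$ for $T_N$-valued random vectors $U, V$. Since $\mu_k = \sum_j U_j a_j^k = s_k \cdot U$ with $s_k := (a_0^k,\dots,a_{N-1}^k) \in \mathbb{R}^N$, the hypothesis $\forall k \ge 0,\ \mu_k =_d \nu_k$ becomes $s_k \cdot U =_d s_k \cdot V$ for all $k$, and the desired conclusion $\mu =_d \nu$ is equivalent to $U =_d V$. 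By Lemma \ref{cramer} it therefore suffices to prove that the set $S := \{s_k \mid k \ge 0\}$ is not contained in any projective variety.

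Second, I would suppose for contradiction that some nonzero homogeneous polynomial $p$ of degree $l$ in $N$ variables vanishes on all of $S$. Expanding $p = \sum_e c_e x^e$ over exponent vectors $e = (e_0,\dots,e_{N-1})$ with $\sum_i e_i = l$, evaluation at $s_k$ gives
\begin{equation}
p(s_k) = \sum_e c_e \prod_{i=0}^{N-1} (a_i^k)^{e_i} = \sum_e c_e\, b_e^{\,k}, \qquad b_e := \prod_{i=0}^{N-1} a_i^{e_i} \in \mathbb{N},
\end{equation}
so that $\sum_e c_e b_e^{\,k} = 0$ for every $k \ge 0$.

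The heart of the argument, and the step where pairwise coprimality is essential, is to show that distinct exponent vectors of the common degree $l$ produce distinct values $b_e$. For an index $i$ with $a_i > 1$ I would fix a prime $q_i \mid a_i$; since the $a_i$ are pairwise coprime, $q_i$ divides no other $a_j$, so the $q_i$-adic valuation of $b_e$ equals $e_i\, v_{q_i}(a_i)$, which recovers $e_i$. This pins down $e_i$ for every index with $a_i > 1$; at most one index can have $a_i = 1$ (the $a_i$ are distinct), and its exponent is then forced by the degree constraint $\sum_i e_i = l$. Hence $e \mapsto b_e$ is injective on degree-$l$ exponent vectors, so the $b_e$ occurring above are pairwise distinct positive integers.

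Finally, with the $b_e$ distinct, the geometric sequences $(b_e^{\,k})_{k \ge 0}$ are linearly independent (a Vandermonde determinant in the nodes $b_e$ is nonzero), so $\sum_e c_e b_e^{\,k} = 0$ for all $k$ forces every $c_e = 0$, contradicting $p \ne 0$. Thus no nonzero homogeneous polynomial vanishes on $S$, the set $S$ is determining by Lemma \ref{cramer}, and $U =_d V$, giving $\mu =_d \nu$. I expect the only genuine obstacle to be the injectivity of $e \mapsto b_e$; everything else is the same reduction used in Subsection \ref{margmom}, and this injectivity is exactly the point at which the multiplicative independence supplied by coprimality replaces the arithmetic dependences that spoiled uniqueness for $\{1,2,4,8\}$.
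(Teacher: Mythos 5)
Your proposal is correct and takes essentially the same route as the paper: the same simplex parametrization $v\mapsto\sum_j v_j\delta_{a_j}$, the same reduction via Lemma \ref{cramer} to showing that $\{(a_0^k,\dots,a_{N-1}^k)\mid k\ge0\}$ lies in no projective variety, and the same use of pairwise coprimality to make distinct degree-$l$ monomials evaluate to distinct integers $b_e$. The only cosmetic differences are that you justify this distinctness explicitly via $q_i$-adic valuations (the paper merely asserts it, and your argument also makes visible why the homogeneous degree constraint handles a possible $a_i=1$), and that you derive the final contradiction from Vandermonde linear independence of the geometric sequences $(b_e^k)_{k\ge0}$, where the paper divides by the dominant $C_{j_0}^k$ and lets $k\to\infty$ --- two interchangeable finishes of the same argument.
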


\begin{proof}

First we consider the homeomorphism between the space of probability measures supported on $\{a_0, \dots, a_{N-1}\}$ with $T_{N}:=\{(x_0, \dots, x_{N-1})|\forall j:0\le j\le N-1,\, x_j\ge0, \sum_{j=0}^{N-1}x_j=1\}$ given by $\theta \mapsto (\theta(\{a_0\}),\dots,\theta(\{a_{N-1}\}))$.  It suffices to show that if $U, V$ are $T_N$ valued and $\forall k \ge0$ we have

\begin{equation}\sum_{j=0}^{N-1}(a_j)^kU_j=_d \sum_{j=0}^{N-1}(a_j)^kV_j\label{last}\end{equation}
then $U=_dV$.  Once this is shown, if $\mu, \nu$ are a.s. supported on $\{a_0, \dots, a_{N-1}\}$, then upon defining \begin{equation}U=\bigg(U_0,\dots, U_{N-1}\bigg)=\bigg(\mu(\{a_0\}), \dots, \mu(\{a_{N-1}\})\bigg)\end{equation}
and
\begin{equation}V=\bigg(V_0, \dots, V_{N-1}\bigg)=\bigg(\nu(\{a_0\}), \dots, \nu(\{a_{N-1}\})\bigg)\end{equation}
we would have that the left side of \eqref{last} is $\mu_k$ and the right side is $\nu_k$.

By Lemma \ref{cramer}, it suffices to show that $\{(a_0^k, \dots, a_{N-1}^k)|k\ge0\}$ is not contained in any projective variety.  

Suppose to the contrary that there exists some nonzero homogeneous polynomial $p$ in $N$ variables of degree $l>0$ for which $\forall k\ge0,\, p(a_0^k, \dots, a_{N-1}^k)=0$.  Enumerate (without repetitions) the monomials $\{M_i\}_{j=1}^K$ of degree $l$, and write $p=\sum_{j=1}^K b_jM_j$ where $b_j\in \mathbb{R}$.  For each $j,\, \exists C_j\in \mathbb{N}$ such that 

\begin{equation}\forall k \ge0,\, M_j(a_0^k, \dots, a_{N-1}^k)=(C_j)^k.\end{equation}  
Because the $M_j$ are distinct, and $a_0, \dots, a_{N-1}$ are all coprime, we find that all the $C_j$ are distinct.  Find $j_0$ such that $C_j$ is the largest, subject to the constraint that $b_j\neq0$.  This is a feasible optimization problem because $p\neq 0$.  There will be only one optimal solution because the $C_j$ are all distinct, and a solution exists because the list of $C_j$ is finite.  Consider $\lim_k p(a_0^k, \dots, a_{N-1}^k)/C_{j_0}^k=b_{j_0}\neq0$ for a contradiction with the fact that $\forall k \ge0,\, p(a_0^k, \dots, a_{N-1}^k)/C_{j_0}^k=0$.  This completes the proof.

\end{proof}

\section{Acknowledgements}

The author would like to thank his adviser, Marek Biskup, for his part in conversations concerning the present project.  The author would also like to thank his oral exam committee, especially Tom Liggett, for helpful suggestions.  The author is grateful to Ji\v r\'i \v Cern\'y for suggesting some of the problems in this article.  Finally, the author is thankful for funding for this project coming from the NSF award DMS-1407558.

\end{document}